\numberwithin{equation}{section}
\newcommand{\dd}{\mathsf{d}}
\newcommand{\ldd}{\mathrm L}
\newcommand{\dddd}{\mathsf D}
\newcommand{\lddd}{L}
\newcommand{\Funct}{\Man^\Gr}
\newcommand{\Gr}{\mathsf{Gr}}
\newcommand{\Man}{\mathsf{Man}}
\newcommand{\Natu}{\mathsf{Nat}}
\newcommand{\Sman}{\mathsf{SMan}}
\newcommand{\fphi}{f}
\newcommand{\fpsi}{g}
\newcommand{\Tan}{\mathsf T}
\newcommand{\yek}{\mathbf 1}
\newcommand{\hck}{\check{h}^{\langle g\rangle}}
\newcommand{\htil}{h^{g_0}}
\newcommand{\bfc}{\mathbf f}
\newcommand{\frA}{\mathbf A}
\newcommand{\frB}{\mathbf B}
\newcommand{\frD}{\mathbf D}
\newcommand{\bcsfr}{\mathbf c_0}
\newcommand{\bcye}{\mathbf c_1}
\newcommand{\bcdo}{\mathbf c_2}
\newcommand{\EEsu}{\mathcal E^E}
\newcommand{\EFsu}{\mathcal E^F}
\newcommand{\Usu}{\mathcal U}
\newcommand{\Vsu}{\mathcal V}
\newcommand{\Wsu}{\mathcal W}
\newcommand{\Gsu}{\mathcal G}
\newcommand{\Fsu}{\mathcal F}
\newcommand{\Hsu}{\mathcal H}
\newcommand{\Msu}{\mathcal M}
\newcommand{\Nsu}{\mathcal N}
\newcommand{\Egsu}{\mathcal E^{\g g}}
\newcommand{\Vgzu}{\mathcal U^{\langle g_0\rangle }}
\newcommand{\kgz}{k^{{\langle g_0\rangle}}}
\newcommand{\g}{\mathfrak}
\renewcommand{\hom}{\mathsf{Mor}}
\newcommand{\homrm}{\mathrm{Hom}}
\newcommand{\higuc}{C^\infty(G,\g g)}
\newcommand{\higuo}{C^\omega(G,\g g)}
\newcommand{\ood}{{\overline{1}}\,}
\newcommand{\eev}{{\overline{0}}\,}
\newcommand{\End}{\mathrm{End}}
\newcommand{\Lie}{\mathrm{Lie}}
\newcommand{\Ad}{\mathrm{Ad}}
\newcommand{\N}{\mathbb N}
\newcommand{\R}{\mathbb R}
\newcommand{\C}{\mathbb C}
\newcommand{\Z}{\mathbb Z}
\newcommand{\eps}{\varepsilon}
\newcommand{\zro}{\mathbf 0}
\newcommand{\Smi}{\mathsf S}
\newtheorem{theorem}{\textbf{Theorem}}[section]
\newtheorem{deff}[theorem]{\textbf{Definition}}
\newtheorem{proposition}[theorem]{\textbf{Proposition}}
\newtheorem{lemma}[theorem]{\textbf{Lemma}}
\newtheorem*{notation}{\textbf{Notation}}
\newcommand{\FunC}{\mathcal E^{\C^{1|1}}}
\newcommand{\Sg}{\breve}
\newenvironment{definition}{\begin{deff}\rmfamily\upshape}{\end{deff}}
\newtheorem{corollary}[theorem]{\textbf{Corollary}}
\newtheorem{rmk}[theorem]{\textbf{Remark}}
\newtheorem{examp}[theorem]{\textbf{Example}}
\newenvironment{remark}{\begin{rmk}\rmfamily\upshape}{\end{rmk}}
\newenvironment{example}{\begin{examp}\rmfamily\upshape}{\end{examp}}
\newcommand{\fg}{\mathfrak g}
\newcommand{\res}{\Big|_}
\DeclareFontFamily{OT1}{pzc}{}
\DeclareFontShape{OT1}{pzc}{m}{it}{<-> s * [1.100] pzcmi7t}{}
\DeclareMathAlphabet{\mathpzc}{OT1}{pzc}{m}{it}
\title[
Positive definite superfunctions and 
unitary representations
]{
Positive definite superfunctions and 
unitary representations of Lie supergroups
}
\author{Karl--Hermann Neeb, Hadi Salmasian}
\thanks{H. Salmasian was supported by an NSERC Discovery Grant and  the Emerging Field Program at Universit\" at Erlangen--N\"urnberg.}
\address{
Department Mathematik\\
FAU Erlangen-N\"urnberg\\
Cauerstra\ss e 11, 91058 Erlangen, Deutschland 
}
\email{karl-hermann.neeb@math.uni-erlangen.de}
\address{Department of Mathematics and Statistics\\University of Ottawa\\ 585 King Edward Ave.\\ Ottawa, ON K1N 6N5\\ Canada}
\email{hsalmasi@uottawa.ca}
\keywords{Lie supergroups, Harish--Chandra pairs, 
unitary representations,
Gelfand--Naimark--Segal construction, analytic functionals. }
\subjclass[2010]{22E65, 22E45, 17B65, 58C50}
\begin{document}
\maketitle

\begin{abstract}
For a broad class of Fr\' echet--Lie supergroups $\Gsu$, we prove that there exists a correspondence between positive definite smooth (resp., analytic) superfunctions on $\Gsu$ and   
matrix coefficients of smooth (resp., analytic) unitary representations of the 
Harish--Chandra pair $(G,\g g)$ 
associated to $\Gsu$. 

As an application, we prove that a smooth positive definite superfunction on $\Gsu$ is analytic if and only if it restricts to an analytic function on the underlying manifold of $\Gsu$.

When the underlying manifold of $\Gsu$ is 1-connected  we obtain a necessary and sufficient condition for a linear functional on the universal enveloping algebra 
$U(\g g_\C)$ to correspond to a matrix coefficient of a unitary representation of $(G,\g g)$. 

The class of Lie supergroups for which the aforementioned results hold is characterised by a condition on the convergence of the Trotter product formula. This condition is strictly weaker than assuming that the underlying Lie group  of $\Gsu$ is a locally exponential Fr\'echet--Lie group. In particular, our results apply to examples of interest in representation theory such as  mapping supergroups and diffeomorphism supergroups.
\end{abstract}

\section{Introduction}

The study of unitarizable modules of infinite-dimensional Lie superalgebras has a long history. 
Both physicists and mathematicians have obtained several interesting examples and classification results 
for these Lie superalgebras. Examples include the $N=1$ and $N=2$ super Virasoro algebras
\cite{boucheretal}, 
\cite{friedanqiu},
\cite{goddard}, \cite{iohara, iohara8}, superconformal current algebras \cite{kactodor}, and affine Lie superalgebras
\cite{jakobsen2}, \cite{jarzh}.


In \cite{varadarajan} the authors initiate harmonic analysis on Lie supergroups by
laying a precise mathematical foundation to
study unitary representations of finite-dimensional Lie supergroups, and use it to classify irreducible unitary representations of translation (and in particular,  Poincar\'e) Lie supergroups. Their main idea is to use the equivalence between the category of Lie supergroups and the category of \emph{Harish--Chandra pairs}
 \cite{kostant}, \cite{koszul}. 
A Harish--Chandra pair is a pair $(G,\g g)$ where 
$G$ is a Lie group,
$\g g = \g g_\eev \oplus \g g_\ood$ is a Lie superalgebra, $\g g_\eev = \Lie(G)$, and there is an \emph{adjoint} action of $G$ on $\g g$ (see Definition
\ref{def-blsupergroup}).
 To justify the robustness of the category of representations of Harish--Chandra pairs, 
 one needs a nontrivial \emph{stability} result which, when $\g g$ is a finite-dimensional Lie superalgebra, 
is proved in \cite[Prop. 2]{varadarajan}.

Several technical issues arise in the extension of the stability result of \cite{varadarajan} to the infinite-dimensional case. These technical issues are resolved in \cite{menesa} when $\g g$ is a Banach--Lie superalgebra. Nevertheless, 
many infinite-dimensional Lie supergroups which are interesting from the point of view of representation theory, such as mapping supergroups and diffeomorphism supergroups, are not Banach-Lie groups.
In \cite{nsfreshetsuper} we succeeded in extending the stability theorem to Harish--Chandra pairs $(G,\g g)$ where $\g g$ is a Fr\' echet--Lie superalgebra and $G$ has the Trotter property, that is, 
for every $x,y\in\Lie(G)$,
\[
\exp(t(x+y))=\lim_{n\to\infty}
\big(
\exp\big(\frac{t}{n}x\big)
\exp\big(\frac{t}{n}y\big)
\big)^n
\]
holds
in the sense of uniform convergence on compact subsets of $\R$. The latter class of Harish--Chandra pairs is broad enough to include the examples of interest in representation theory (see Example \ref{ex:trot}).

Lie supergroups such as 
mapping and diffeomorphism supergroups are infinite dimensional supermanifolds 
modeled on Fr\' echet 
spaces. In
\cite[Rem. 2.6]{delignemorgan}, it is pointed out that
the
Berezin--Kostant--Leites theory, which defines a supermanifold as a locally ringed space,
is not suitable in the infinite-dimensional context. Thus, infinite dimensional Lie supergroups 
should be considered 
as group-objects in a different category. 
The definition and properties of this category were initially outlined in a preprint by Molotkov and later studied extensively in Sachse's thesis 
\cite{sachse}
(see  \cite{aldlau} as well).
The idea behind the definition of the latter category  is  
the functor of points approach adapted to the framework of the DeWitt topology.  
In this approach, a supermanifold is uniquely determined by its $\Lambda_n$--points, where $\Lambda_n$ denotes the Gra\ss mann algebra with $n$ generators. Therefore 
a supermanifold can be thought of as a functor 
$\Fsu:\Gr\to\Man$ together with an atlas
which is induced by a Grothendieck 
(pre-)topology on the category  $\Man^\Gr$.
Here $\Gr$ is the category of finite-dimensional 
Gra\ss mann algebras and $\Man$ is the category of smooth or analytic manifolds
modeled on locally convex spaces.

\subsection{Our main results}
In this article we  investigate the relationship
between smooth (and analytic) positive definite superfunctions on a (possibly in\-fi\-nite-di\-men\-sio\-nal) Lie supergroup $\Gsu$, 
unitary representations of the 
Harish--Chandra pair $(G,\g g)$ associated to $\Gsu$, and positive linear functionals on the universal enveloping algebra $U(\g g_\C)$. 

Our first main result (Theorem \ref{cinfandhom}) identifies 
the $\C$--superalgebra of smooth superfunctions on 
$\Gsu$ with a natural subalgebra of  
$\homrm_{\g g_\eev}(U(\g g_\C),C^\infty(G,\C))$.
This is a well known result for Berezin--Kostant--Leites Lie supergroups \cite{koszul} but its proof in the 
infinite dimensional setting 
requires new ideas because  
the supermanifold structure is not given 
by a sheaf of superalgebras anymore. Another issue in infinite dimensions is the lack of standard charts 
obtained by the exponential map. In order to prove
Theorem \ref{cinfandhom}, we need to use
several basic facts about the structure of infinite dimensional Lie supergroups and their left invariant 
 differential operators. We were unable to find a reference for these facts and therefore we have 
included detailed proofs. 
The reader who is familiar with Harish--Chandra pairs but not interested in the technical details of the functorial approach to Lie supergroups can  continue reading the paper from Section 
\ref{sec-gns}.

Our second main result (Theorem \ref{gns-smth})
is that a smooth (resp., analytic) positive definite superfunction on $\Gsu$ is the matrix coefficient of a cyclic smooth (resp., analytic) unitary representation of $(G,\g g)$ and vice versa. This result is an extension of the well known Gelfand--Naimark--Segal (GNS) construction to Lie 
supergroups. In a sense it means that to describe unitary representations of Lie supergroups it is sufficient to study unitary representations of their Harish--Chandra pairs.
From the GNS construction we also obtain the following interesting corollary  
(see Corollary \ref{cor-autoan}): if $f$ is a smooth positive definite superfunction
on $\Gsu$ which restricts to an analytic function on the underlying Lie group $\Gsu_{\Lambda_0}$, then $f$ restricts to  an analytic function on the Lie group $\Gsu_\Lambda$ for every $\Lambda$.

Our method to prove Theorem \ref{gns-smth} is similar in spirit to the classical GNS construction, but several technical issues arise. For instance, unlike the classical GNS construction,
in our framework one has to work with unbounded representations of semigroups, such as the  
``semidirect product'' $G\ltimes U(\fg_\C)$, so that we are actually dealing with 
structures similar to crossed product algebras. 
The stability result of \cite{nsfreshetsuper} plays a crucial role in our argument.

Our third main result (Theorem \ref{thm-cinteg}) is about an extension of the noncommutative moment problem to Lie supergroups. For an elaborate discussion of the history of the noncommutative moment problem see \cite[Sec. 1]{neebanalytic}.
If $(\pi,\rho^\pi,\mathscr H)$ is an analytic unitary representation of $(G,\g g)$ and $v\in\mathscr H^\omega$, then one can construct a $\C$--linear map $\lambda_v:U(\g g_\C)\to\C$ defined by 
$\lambda_v(D):=\langle \rho^\pi(D)v,v\rangle$. The noncommutative moment problem is to characterize the $\C$--linear maps $\lambda_v:U(\g g_\C)\to\C$ 
which are  obtained from unitary representations by the above construction. We obtain a necessary and sufficient condition when $\g g$ is a Fr\' echet--Lie superalgebra and $G$ is a 1-connected Fr\' echet BCH--Lie group. This is achieved by modifying the method of \cite{neebanalytic} using the ideas that were developed in \cite{menesa}.

In future work, which will rely on this article and \cite{nsfreshetsuper}, we will study global realizations of unitarizable super Virasoro algebras 
(\cite{iohara}), superconformal current algebras (\cite{kactodor}), and mapping superalgebras (\cite{jarzh}).

\subsection{Structure of this article}
Section~2 will review the background material concerning calculus on locally convex spaces. Section~3 will review the definition of the category of supermanifolds. In Section~4 we show how one can associate a Harish--Chandra pair to a supermanifold. In Section~5 we study left invariant differential operators on Lie supergroups and prove Theorem~\ref{cinfandhom}. Section~6  is devoted to the GNS construction. The necessary and sufficient condition for integrability of functionals is proved in Section~7.

\section{Calculus on locally convex spaces}

Unless stated otherwise, all vector spaces will be 
over $\R$. 
If $E$ and $F$ are  vector spaces
and 
$m\geq 1$ then 
$
\mathrm{Alt}_m(E,F)
$ will denote the vector space of
alternating $m$-linear maps 
$f:E^m
\to F.
$ For convenience we  set $\mathrm{Alt}_0(E,F):=F$.
If $E=E_\eev\oplus E_\ood$ is a $\Z_2$--graded vector space, then the parity of a homogeneous element $v\in E$ will be denoted by $|v|\in\{\eev,\ood\}$.
\subsection{Smooth maps between locally convex spaces}
Throughout this paper, by a \emph{locally convex space} we mean a Hausdorff locally convex topological vector space.

We quickly review basic definitions and properties of differentiable maps between locally convex spaces. For further details see \cite{hamilton}, \cite{milnor}, and \cite{neebjpn}.

\begin{definition}
Let $E$ and $F$ be 
locally convex spaces and $U\subseteq E$ be an open set. 
A map $h:U\to F$ is called \emph{differentiable} at $p\in U$ if 
the directional derivatives
\[ 
\dd h(p)(v):=
\dd_vh(p):=
\lim_{t\to 0}\frac{1}{t}(h(p+tv)-h(p))
\]
exist for all $v\in E$. The map $h$ is called $C^1$ if it is continuous, differentiable at every $p\in U$, and the map  
\[
\dd h:U\times E\to F\ ,\ (p,v)\mapsto
\dd_vh(p)
\]
is continuous.  If $k>1$ is an integer then a
continuous map $h:U\to F$ is called $C^k$ if 
the limit
\[
\dd^jh(p)(v_1,\ldots, v_j):=
\lim_{t\to 0}\frac{1}{t}
\left(
\dd^{j-1}h(p+tv_j)(v_1,\ldots,v_{j-1})
-\dd^{j-1}h(p)(v_1,\ldots, v_{j-1})
\right)
\]
exists
for every $1\leq j\leq k$ and every 
$(u,v_1,\ldots,v_j)\in U\times  E^j$, and the maps
\[
\dd^jh:U\times E^j
\to F\ ,\ (p,v_1,\ldots,v_j)\mapsto \dd^jh(p)(v_1,\ldots,v_j)
\] 
are continuous.
We call a map $h:U\to F$ \emph{smooth} when it is $C^k$ for every $k\geq 1$. 
\end{definition}

\begin{remark}
The above notion of smooth maps naturally leads to smooth manifolds, Lie groups, etc. For more details see \cite{glockner}.
\end{remark}

It is known that if $h:U\to F$ is $C^k$ then for every $p\in U$ the map 
\[
E^k\to F\ ,\ 
(v_1,\ldots,v_k)\mapsto \dd^kh(p)(v_1,\ldots,v_k)
\] 
is a continuous symmetric 
$k$-linear map. The Chain Rule holds in the following form: if $E,F,G$ are locally convex spaces, $U\subseteq E$ and $V\subseteq F$ are open, and $f:U\to F$ and $g:V\to G$ are $C^1$ maps such that 
$f(U)\subseteq V$, then $g\circ f:U\to G$ is also 
$C^1$ and 
\[
\dd_v(g\circ f)(p)=\dd g\big(f(p)\big)
\big(\dd_v f(p)\big)
\text{\ \,for every }p\in U\text{ and every }
v\in E. 
\]

The following lemma is sometimes called the 
Fa\`{a} di Bruno formula. Its proof is by induction on $n$. We omit the proof because the argument in 
the locally convex setting is the same as the one  
for finite-dimensional spaces.

\begin{lemma}
\label{faadibruno}
Let $E$ and $F$ be locally convex spaces, $U\subseteq E$ be open, and $V\subseteq\R^n$ be an open 0-neighborhood. Let $\mathscr P_n$ denote the collection 
of partitions of 
the set $\{1,\ldots,n\}$.
If  $f:U\to F$ and $g:V\to U$ are smooth maps
then
\begin{align*}
\frac{\partial}{\partial t_1}
\cdots&
\frac{\partial}{\partial t_n}
f\circ g(t_1,\ldots,t_n)\res{t_1=\cdots=t_n=0}=
\sum_{\{A_1,\ldots,A_k\}\in\mathscr P_n}
\!\!\!\!\!\!\dd^k f\big(g(0,\ldots,0)\big)
(
v_{A_1},\ldots,v_{A_k}
)
\end{align*}
where
\[
v_A:=
\frac{\partial}{\partial t_{a_1}}
\cdots
\frac{\partial}{\partial t_{a_\ell}}
g(t_1,\ldots,t_n)\res{t_1=\cdots=t_n=0}
\]
for every 
$A:=\{a_1,\ldots,a_\ell\}\subseteq \{1,\ldots, n\}$.
\end{lemma}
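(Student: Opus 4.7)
The plan is to proceed by induction on $n$. The base case $n=1$ is the chain rule recalled above: the only partition of $\{1\}$ has a single block $\{1\}$, and the formula reduces to $\dd(f\circ g)(0)(1) = \dd f(g(0))(\dd g(0)(1))$. For the inductive step, with the formula assumed for $n-1$, I would treat $t_n$ as a parameter and apply the inductive hypothesis to the smooth map $(t_1,\ldots,t_{n-1})\mapsto f\circ g(t_1,\ldots,t_n)$, obtaining
\begin{align*}
& \frac{\partial^{n-1}}{\partial t_1\cdots\partial t_{n-1}} f\circ g(t_1,\ldots,t_n)\res{t_1=\cdots=t_{n-1}=0} \\
& \qquad = \sum_{\{A_1,\ldots,A_k\}\in\mathscr P_{n-1}} \dd^k f\bigl(g(0,\ldots,0,t_n)\bigr)\bigl(v_{A_1}(t_n),\ldots, v_{A_k}(t_n)\bigr),
\end{align*}
where $v_A(t_n)$ denotes the iterated partial of $g$ in the variables indexed by $A$, evaluated at $t_1=\cdots=t_{n-1}=0$ but retaining the dependence on $t_n$.

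Next I would differentiate each summand in $t_n$ at $t_n=0$. Since $\dd^k f(u)$ is a continuous symmetric $k$-linear map in its $k$ vector arguments, the product rule applies in those slots, and since $\dd^{k+1} f(u)(w, v_1, \ldots, v_k)$ is by definition the directional derivative of $u \mapsto \dd^k f(u)(v_1, \ldots, v_k)$ in the direction $w$, differentiating the base point produces a $\dd^{k+1} f$ contribution. The derivative of the summand indexed by $\{A_1,\ldots,A_k\}$ at $t_n=0$ is therefore
\[
\dd^{k+1} f(g(\zro))\bigl(v_{\{n\}}, v_{A_1}, \ldots, v_{A_k}\bigr) + \sum_{j=1}^k \dd^k f(g(\zro))\bigl(v_{A_1}, \ldots, v_{A_j\cup\{n\}}, \ldots, v_{A_k}\bigr),
\]
where $\zro := (0,\ldots,0)\in\R^n$ and each $v_B$ is the partial derivative of $g$ in the variables indexed by $B$, evaluated at $\zro$.

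The combinatorial step is that every partition of $\{1,\ldots,n\}$ arises from a unique partition $\{A_1,\ldots,A_k\}$ of $\{1,\ldots,n-1\}$ in exactly one of two ways: either by adjoining $\{n\}$ as a new singleton block (corresponding to the $\dd^{k+1}f$ term above, which has $k+1$ arguments), or by replacing some block $A_j$ with $A_j\cup\{n\}$ (corresponding to the $k$ remaining terms). Summing over all partitions of $\{1,\ldots,n-1\}$ and both families of modifications exactly reproduces the sum indexed by $\mathscr P_n$ on the right-hand side for $n$, closing the induction.

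The main obstacle is justifying the differentiation of the composite $t_n \mapsto \dd^k f(g(0,\ldots,0,t_n))(v_{A_1}(t_n), \ldots, v_{A_k}(t_n))$ rigorously in the locally convex setting. This reduces to joint smoothness of $\dd^k f: U\times E^k\to F$ and smoothness of each $v_{A_j}(t_n)$ in $t_n$, followed by one more application of the chain rule; both smoothness facts follow from smoothness of $f$ and $g$ together with the iterated chain rule recalled at the start of this section. Once these ingredients are in place the computation is purely formal and identical to the finite-dimensional proof, as the authors note.
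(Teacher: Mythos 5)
Your argument is correct and is exactly the route the paper has in mind: the paper omits the proof, stating only that it is by induction on $n$ and identical to the finite-dimensional case, and your induction (splitting off $t_n$, using symmetry of $\dd^{k+1}f$, and the bijection between partitions of $\{1,\ldots,n\}$ and partitions of $\{1,\ldots,n-1\}$ with $n$ adjoined as a new singleton or inserted into an existing block) is that standard argument. The only genuinely locally convex ingredient, smoothness of $(u,v_1,\ldots,v_k)\mapsto \dd^k f(u)(v_1,\ldots,v_k)$ on $U\times E^k$ so that the final differentiation in $t_n$ is justified, is a standard fact in this calculus, and you have correctly identified it as the point to check.
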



We briefly mention the definition of analytic maps between locally convex spaces. For a discussion of different notions of analyticity as well as pertinent references,  see
\cite[Sec. 2]{neebanalytic}.
\begin{definition}
Let $E$ and $F$ be locally convex spaces and $U\subseteq E$ be an open set. A continuous map $h:U\to F$ is called \emph{analytic} if for every $p\in U$ there exists an open 0-neighborhood $V_p$ in $E_\C:=E\otimes_\R\C$ 
and continuous homogeneous polynomials $h_n:E\to F$ of degree $n$ such that  
$h(p+v)=\sum_{n=0}^\infty h_n(v)$ for every $v\in V_p\cap E$.
\end{definition}

\subsection{Left invariant differential operators on Lie groups}
In this paper we assume that all Lie groups are smooth manifolds modeled on locally convex spaces.
The exponential map of a Lie group, if it exists, will be denoted by $x\mapsto e^x$.

Let $H$ be a Lie group and $\g h:=\Lie(H)$ be its Lie algebra. Assume that the exponential map 
\[
\g h\to H\ ,\ x\mapsto e^x
\]
is smooth. 
Let $U\subseteq H$ be open, $F$ be a locally convex space, and $h:U\to F$ be a smooth map. 
For every $x\in\g h$ set 
\[
\lddd_x h:U\to F\ ,\
\lddd_xh(g):=\lim_{t\to 0}\frac{1}{t}\big(
h(ge^{tx})-h(g)\big).
\]
\begin{lemma}
\label{lemsymmetr}
Let $H$ be a Lie group with a smooth exponential map,  $F$ be a locally convex space,  $h:H\to F$ be a smooth map, and 
$v_1,\ldots, v_n\in \Lie(H)$. For every $g\in H$
consider the map
\[
u_{g}:\R^n\to F\ ,\
u_{g}(t_1,\ldots,t_n):=
h(ge^{t_1v_1+\cdots+t_nv_n}).
\]
Then
\[
\frac{\partial}{\partial t_1}
\cdots
\frac{\partial}{\partial t_n}
u_{g}(t_1,\ldots,t_n)\res{t_1=\cdots=t_n=0}
=
\frac{1}{n!}
\sum_{\sigma\in S_n}\lddd_{v_{\sigma(1)}}\cdots\lddd_{v_{\sigma(n)}}h(g).
\]
\end{lemma}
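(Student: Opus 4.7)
The plan is to argue by polarization. Both sides of the asserted identity are, for fixed $g\in H$ and fixed smooth $h$, functions of $(v_1,\ldots,v_n)\in\Lie(H)^n$ with values in $F$. I would first show that both sides are continuous symmetric $n$-linear maps $\Lie(H)^n\to F$, and then check that they coincide on the diagonal $v_1=\cdots=v_n=v$. The classical polarization identity---which recovers a symmetric $n$-linear form uniquely from its restriction to the diagonal and is purely algebraic, hence applicable with values in the locally convex space $F$---then forces the two sides to agree everywhere.

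For the right-hand side, the chain rule applied to $t\mapsto ge^{tv}$ shows that $v\mapsto\lddd_v h(g)$ is $\R$-linear and that $\lddd_v h$ is again smooth on $U$. Iterating, $\lddd_{v_{\sigma(1)}}\cdots\lddd_{v_{\sigma(n)}}h(g)$ is $n$-linear in $(v_1,\ldots,v_n)$ for every fixed $\sigma\in S_n$, and averaging over $\sigma\in S_n$ makes the expression manifestly symmetric.

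For the left-hand side, I would apply Lemma \ref{faadibruno} to the composition $u_g=(h\circ L_g\circ\exp)\circ\Lambda$, where $L_g(x):=gx$ denotes left translation and $\Lambda\colon\R^n\to\g h$ is the \emph{linear} map $\Lambda(t_1,\ldots,t_n):=t_1v_1+\cdots+t_nv_n$. The crucial point is that, because $\Lambda$ is linear, the vector $v_A:=\partial_{a_1}\cdots\partial_{a_\ell}\Lambda|_{t=0}$ appearing in the Fa\`a di Bruno formula vanishes as soon as $|A|\geq 2$ and equals $v_i$ when $A=\{i\}$. Hence only the partition of $\{1,\ldots,n\}$ into singletons gives a nonzero contribution, producing
\[
\frac{\partial}{\partial t_1}\cdots\frac{\partial}{\partial t_n}u_g\bigg|_{t_1=\cdots=t_n=0}
=\dd^n(h\circ L_g\circ\exp)(0)(v_1,\ldots,v_n),
\]
which is a continuous symmetric $n$-linear form in $(v_1,\ldots,v_n)$ by the general properties of higher differentials of smooth maps.

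Finally, the diagonal specialization is immediate: setting $v_1=\cdots=v_n=v$ collapses the exponent to $sv$ with $s:=t_1+\cdots+t_n$, so the one-variable chain rule yields
\[
\frac{\partial}{\partial t_1}\cdots\frac{\partial}{\partial t_n}u_g\bigg|_{t_1=\cdots=t_n=0}
=\frac{d^n}{ds^n}\bigg|_{s=0}h(ge^{sv})=\lddd_v^n h(g),
\]
while the right-hand side reduces trivially to $\lddd_v^n h(g)$ by averaging $n!$ identical terms. The only mild obstacle is the combinatorial bookkeeping in the Fa\`a di Bruno expansion; the whole argument hinges on the elementary but essential observation that the linearity of $\Lambda$ annihilates every partition other than the one into singletons, which is what allows the ordered iterated derivatives on the right-hand side to be repackaged as a single symmetric higher differential.
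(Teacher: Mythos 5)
Your proof is correct, and although it shares the polarization spirit of the paper's argument, the implementation is genuinely different. The paper does not invoke Lemma \ref{faadibruno} here: it fixes scalars $x_1,\ldots,x_n$, sets $y:=\sum_i x_iv_i$, notes that $\lddd_y h(ge^{s_\circ y})=\frac{\partial}{\partial s}(h\circ\gamma)\res{s=s_\circ}=\sum_i x_i\big(\partial_{t_i}u_g\big)(x_1s_\circ,\ldots,x_ns_\circ)$ for $\gamma(s)=ge^{sy}$, iterates this relation $n$ times, sets $s_\circ=0$, and compares the coefficient of $x_1\cdots x_n$ in the resulting polynomial identity in the $x_i$ — i.e.\ polarization carried out by hand, using only the chain rule and the symmetry of the mixed partials of the scalar-variable map $u_g$. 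You instead identify the left-hand side, via Lemma \ref{faadibruno} applied to $u_g=(h\circ l_g\circ\exp)\circ\Lambda$ with $\Lambda$ linear (so only the singleton partition survives), with the symmetric $n$-linear map $\dd^n(h\circ l_g\circ\exp)(0)(v_1,\ldots,v_n)$, check both sides on the diagonal, and conclude by the algebraic polarization identity. What your route buys is a cleaner structural statement — the symmetrized iterated left-invariant derivatives at $g$ are exactly the higher differentials of $h\circ l_g\circ\exp$ at $0$ — which is consonant with how the lemma is later combined with Lemma \ref{faadibruno} and \eqref{al1.ln} in the proof of Theorem \ref{cinfandhom}; what the paper's route buys is self-containedness, needing neither Fa\`a di Bruno nor the polarization identity. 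Two minor points: continuity of the two $n$-linear maps is not actually needed (polarization is purely algebraic, as you yourself observe), so that claim can be dropped or relegated to a remark; and the diagonal computation rests not merely on the one-variable chain rule but on the one-parameter group property $e^{(s+t)v}=e^{sv}e^{tv}$, which yields $\frac{d}{ds}h(ge^{sv})=(\lddd_vh)(ge^{sv})$ — the same property the paper uses implicitly, so this is a wording issue, not a gap.
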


\begin{proof}
Fix $ x_1,\ldots, x_n\in\R$ and set $y:=\sum_{i=1}^nx_iv_i$.
Consider the map
\[
\gamma:\R\to H\ ,\ \gamma(s):=ge^{sy}.
\] 
Then  
$
h\circ\gamma(s)=u_{g}(x_1s,\ldots,x_ns)
$
and therefore
for every 
$s_\circ\in\R$ we can write 
\begin{align*}
\sum_{i=1}^nx_i\lddd_{v_i}h(ge^{s_\circ y})
&=\lddd_yh(ge^{s_\circ y})
=\frac{\partial}{\partial s}(h\circ\gamma)
\res{s=s_\circ}\\
&
=\sum_{i=1}^nx_i
\Big(
\frac{\partial}{\partial t_i} 
u_{g}(
t_1,\ldots,t_n)
\res{t_1=x_1s_\circ,\ldots,t_n=x_ns_\circ}
\Big).
\end{align*}
To complete the proof
 we use the above relation repeatedly to compute 
$(\lddd_y)^nh(ge^{s_\circ y})$, 
 set $s_\circ=0$, and compare the coefficient of $x_1\cdots x_n$ on both sides.
\end{proof}

\section{Supermanifolds as functors}

A locally convex space $E$ is called \emph{$\Z_2$--graded} 
if $E=E_\eev\oplus E_\ood$ where $E_\eev$ and $E_\ood$ are locally convex spaces and the direct sum decomposition of $E$ is topological.

Throughout this section $E=E_\eev\oplus E_\ood$ and $F=F_\eev\oplus F_\ood$ will denote 
$\mathbb Z_2$--graded locally convex spaces.

\subsection{The category $\Gr$}
Let  $\mathsf{Gr}$ denote the category of finite-dimensional real Gra\ss mann algebras, i.e., unital associative
$\R$-algebras  
$\Lambda_n$, $n\geq 0$, generated by elements $\lambda_1,\ldots,\lambda_n$ which satisfy the relations $\lambda_i\lambda_j+\lambda_j\lambda_i=0$ for every $1\leq i, j\leq n$. Every $\Lambda\in\Gr$ has a canonical $\Z_2$--grading $\Lambda:=\Lambda_\eev\oplus \Lambda_\ood$, i.e., it is an associative superalgebra.
The identity element of any $\Lambda\in\Gr$ will be 
denoted by $1_{\Lambda}$.
Given an integer $n\geq 0$ and a set $I=\{i_1,\ldots,i_\ell\}\subseteq\N$ where $1\leq i_1<\cdots<i_\ell\leq n$, we define $\lambda_I\in\Lambda_n$ by $\lambda_I:=\lambda_{i_1}\cdots\lambda_{i_\ell}$.

The morphisms between objects of $\Gr$ are homomorphisms of $\Z_2$--graded unital algebras. For every $m,n\geq 0$ 
the set of morphisms from $\Lambda_m$ into $\Lambda_n$
will be denoted by $\hom_\Gr(\Lambda_m,\Lambda_n)$.

Observe that $\Lambda_0\simeq\R$ and therefore for every $\Lambda\in\Gr$ there exist unique morphisms $\varepsilon_\Lambda\in\hom_\Gr(\Lambda,\Lambda_0)$ and $\iota_\Lambda\in\hom_\Gr(\Lambda_0,\Lambda)$.
The kernel of $\varepsilon_\Lambda$ is called the \emph{augmentation ideal} of $\Lambda$ and will be 
denoted by $\Lambda^+$. We set $\Lambda^+_\eev:=\Lambda^+\cap\Lambda_\eev$.

For every $m\geq n\geq 0$ let $\varepsilon_{m,n}\in\hom_\Gr(\Lambda_m,\Lambda_n)$ and $\iota_{n,m}\in\hom_\Gr(\Lambda_n,\Lambda_m)$ be the homomorphisms uniquely identified by
\[
\varepsilon_{m,n}(\lambda_k):=
\begin{cases}
\lambda_k&\text{if }k\leq n,\\
0&\text{otherwise}
\end{cases}
\]
and 
$
\iota_{n,m}(\lambda_k):=\lambda_k 
$ 
for all $1\leq k\leq n$.
In particular
$\varepsilon_{m,0}=\varepsilon_{\Lambda_m}$
and $\iota_{0,m}=\iota_{\Lambda_m}$.

\subsection{The category $\Man^\Gr$}

Let $\Man$ denote the category of smooth manifolds modeled on locally convex spaces (see Remark \ref{remanalytictheory}). 
For every two functors $\Fsu,\Gsu:\mathsf{Gr}\to\Man$, the set of natural transformations from $\mathcal F$ to 
$\Gsu$ will be denoted by 
$\Natu(\Fsu,\Gsu)$.
The category whose objects are functors $\mathcal F:\mathsf{Gr}\to\Man$ and whose
morphisms are natural transformations will be denoted by
$\Funct$. If $\Fsu\in\Funct$ and $\varrho\in\hom_\Gr(\Lambda,\Lambda')$
then the morphism in 
$\Man$ from $\Fsu_\Lambda$ to $\Fsu_{\Lambda'}$ that is induced by $\varrho$ 
will be denoted by 
$
\Fsu_\varrho:\Fsu_\Lambda\to\Fsu_{\Lambda'}
$.

\begin{remark}
\label{ntrlinj}

Let $\Gsu\in\Man^\Gr$. For every $m\geq n\geq 0$ 
the map $
\Gsu_{\iota_{n,m}}:
\Gsu_{\Lambda_n}\to\Gsu_{\Lambda_m}
$ is injective and identifies 
$\Gsu_{\Lambda_n}$
with a subset of $\Gsu_{\Lambda_m}$.
We will use this natural 
identification to simplify our notation. 
For instance
if 
$
f\in\Natu(\Gsu,
\Fsu)
$
for some $\Fsu\in\Man^\Gr$, then for every 
$p\in\Gsu_{\Lambda_n}$
we write 
$f_{\Lambda_m}(p)$ 
instead of $f_{\Lambda_m}(\Gsu_{\iota_{n,m}}(p))$.
\end{remark}

Let $\Fsu,\Gsu\in\Funct$
and 
$\fphi\in\Natu(\mathcal G,\mathcal F)$. We write $\Gsu\sqsubseteq_\fphi\Fsu$ if	
for every $\Lambda\in\Gr$, the map 
$\fphi_\Lambda:\Gsu_\Lambda\to\Fsu_\Lambda$ 
is a diffeomorphism onto an open subset of 
$\Fsu_\Lambda$.
We write $\Gsu\sqsubseteq \Fsu$
if 
for every $\Lambda\in\Gr$ the set
$\Gsu_\Lambda$ is an open subset of $\Fsu_\Lambda$ and 
$\fphi_\Lambda:\Gsu_\Lambda\to\Fsu_\Lambda$ 
is the canonical injection.
If $\Hsu\sqsubseteq \Gsu$ and $f\in\Natu(\Gsu,\Fsu)$
then we define $f\res{\Hsu}\in\Natu(\Hsu,\Fsu)$ by 
\[
\left(f\res{\Hsu}\right)_\Lambda:=
f_\Lambda\res{\Hsu_\Lambda}.
\]

\subsection{Superdomains and supermanifolds}

Let $\EEsu\in\Funct$ 
be defined by 
\[
\EEsu_\Lambda:=(E\otimes \Lambda)_\eev
\text{ 
for every 
$\Lambda\in\Gr$.}
\]
The zero vector in $\EEsu_\Lambda$ 
will be denoted by $\zro_\Lambda$.
Every $\varrho\in\hom_\mathsf{Gr}(\Lambda,\Lambda')$ induces a map 
\[
\EEsu_\varrho:\EEsu_\Lambda\to\EEsu_{\Lambda'}
\ ,\ v\otimes \lambda\mapsto v\otimes\varrho(\lambda).
\]

\begin{definition}

A functor 
$\mathcal U\sqsubseteq\EEsu$
is called a \emph{superdomain}.

\end{definition}
The next proposition characterizes superdomains.
\begin{proposition}
\label{apx-opensubfcn}
If 
$\Usu\sqsubseteq\EEsu$ then there exists an open set $U\subseteq E_\eev$ such that 
\[
\Usu_\Lambda=U+(E\otimes \Lambda^+)_\eev
\] 
for every 
$\Lambda\in\Gr$.
\end{proposition}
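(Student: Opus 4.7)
The plan is to set $U := \Usu_{\Lambda_0}$, which is open in $\EEsu_{\Lambda_0} \simeq E_\eev$ by the definition of $\sqsubseteq$, and to verify that $\Usu_\Lambda = U + (E\otimes\Lambda^+)_\eev$ for every $\Lambda \in \Gr$. Using the canonical decomposition $\Lambda = \R \cdot 1_\Lambda \oplus \Lambda^+$, every $v \in \EEsu_\Lambda$ splits uniquely as $v = v_0 + v^+$ with body $v_0 \in E_\eev$ equal to $\EEsu_{\varepsilon_\Lambda}(v)$ and soul $v^+ \in (E\otimes\Lambda^+)_\eev$.

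The inclusion $\Usu_\Lambda \subseteq U + (E\otimes\Lambda^+)_\eev$ will be immediate from the subfunctor property applied to $\varepsilon_\Lambda \in \hom_\Gr(\Lambda,\Lambda_0)$: the map $\EEsu_{\varepsilon_\Lambda}$ sends $\Usu_\Lambda$ into $\Usu_{\Lambda_0} = U$, forcing the body of every element of $\Usu_\Lambda$ to lie in $U$. The reverse inclusion is the substantive step. Given $v_0 \in U$ and arbitrary $v^+ \in (E\otimes\Lambda^+)_\eev$, the subfunctor property applied to $\iota_\Lambda$ places $v_0$ in $\Usu_\Lambda$, but since $v^+$ is not assumed to be small, openness of $\Usu_\Lambda$ alone does not suffice to conclude that $v_0 + v^+ \in \Usu_\Lambda$.

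To bridge this gap I would exploit the one-parameter family of endomorphisms $\sigma_t \in \hom_\Gr(\Lambda,\Lambda)$ defined on generators by $\sigma_t(\lambda_i) := t\lambda_i$ for $t \in \R$. These preserve the Gra\ss mann relations and the $\Z_2$-grading, hence are legitimate morphisms in $\Gr$; moreover $\sigma_0 = \iota_\Lambda \circ \varepsilon_\Lambda$, and for $t \neq 0$ they are isomorphisms with inverse $\sigma_{1/t}$. Since $\EEsu_{\sigma_t}$ acts on the $\lambda_I$-component of any element by multiplication by $t^{|I|}$, the assignment $t \mapsto \EEsu_{\sigma_t}(v_0 + v^+) = v_0 + \EEsu_{\sigma_t}(v^+)$ is a polynomial, hence continuous, curve in $\EEsu_\Lambda$ that passes through $v_0$ at $t = 0$.

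The main obstacle is precisely this passage from $v_0 \in \Usu_\Lambda$ to $v_0 + v^+ \in \Usu_\Lambda$ for possibly large $v^+$, and the resolution is a two-step rescaling. Openness of $\Usu_\Lambda$ together with continuity of the curve yields some $t_0 \neq 0$ close to $0$ with $v_0 + \EEsu_{\sigma_{t_0}}(v^+) \in \Usu_\Lambda$; then applying the subfunctor property to the automorphism $\sigma_{1/t_0}$ gives $v_0 + v^+ = \EEsu_{\sigma_{1/t_0}}(v_0 + \EEsu_{\sigma_{t_0}}(v^+)) \in \Usu_\Lambda$, which completes the argument. Intuitively, one first shrinks the soul into the open neighborhood of $v_0$ furnished by $\Usu_\Lambda$ using $\sigma_{t_0}$, and then rescales back along the orbit using the inverse automorphism, both steps being legal because $\Usu$ is a subfunctor.
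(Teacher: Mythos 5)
Your proof is correct and follows essentially the same route as the paper: both set $U:=\Usu_{\Lambda_0}$, get the inclusion $\Usu_\Lambda\subseteq U+(E\otimes\Lambda^+)_\eev$ from $\varepsilon_\Lambda$, and obtain the reverse inclusion from openness of $\Usu_\Lambda$ at $u\otimes 1_\Lambda$ combined with the generator-scaling endomorphisms of $\Lambda$ and the subfunctor property. The paper phrases the rescaling as a union $\bigcup_{s>1}\Usu_{\varrho_s}(V^+)$ over a fixed $0$-neighborhood, while you shrink the given soul by $\sigma_{t_0}$ and expand back by $\sigma_{1/t_0}$, which is the same mechanism expressed pointwise.
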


\begin{proof}
Set $U:=\Usu_{\Lambda_0}$. 
Clearly
\[
\Usu_\Lambda
\subseteq 
\{
p\in\EEsu_\Lambda\,:\,\EEsu_{\eps_\Lambda}(p)\in U
\}
=
U+(E\otimes \Lambda^+)_\eev 
\]
for every $\Lambda\in\Gr$.
Next we prove that 
$
\Usu_\Lambda\supseteq U+(E\otimes \Lambda^+)_\eev
$.
Let $\Lambda:=\Lambda_n$. Fix $u\in U$. Then  
$\Usu_{\iota_\Lambda}(u)\in\Usu_\Lambda$, i.e.,  
$u\otimes 1_\Lambda\in \Usu_\Lambda$. Since 
$\Usu_\Lambda\subseteq \EEsu_\Lambda$ is open, there exists an open neighborhood $V$ of $\zro_\Lambda\in \EEsu_\Lambda$ such that 
$(u\otimes 1_\Lambda)+V\subseteq \Usu_\Lambda$. 

Set $V^+:=V\cap(E\otimes \Lambda^+)_\eev$.
For every $s>1$ let
$\varrho_s\in\hom_\mathsf{Gr}(\Lambda,\Lambda)$ 
be the homomorphism uniquely defined by 
$
\varrho_s(\lambda_i):=s\lambda_i\text{ for $1\leq i\leq n$}.
$ 
Then 
\[
\Usu_\Lambda
\supseteq
\Usu_{\varrho_s}
\left(
(u\otimes 1_\Lambda) +V^+
\right)
= (u\otimes 1_\Lambda) + 
\mathcal U_{\varrho_s}(V^+)
\]
and 
$
\bigcup_{s>1}
\left(
(u\otimes 1_\Lambda) + 
\Usu_{\varrho_s}(V^+)
\right)
=
 u\otimes 1_\Lambda+(E\otimes \Lambda^+)_\eev
$.
\qedhere
\end{proof}

\begin{definition}
Let $\Usu\sqsubseteq \EEsu$,
$\Vsu\sqsubseteq\EFsu$, and 
$\fphi\in\Natu(\Usu,\Vsu)$. We call $\fphi$ a
\emph{smooth morphism} from $\Usu$ to $\Vsu$  
if for every $\Lambda\in\Gr$ and every 
$p\in\Usu_\Lambda$ 
the differential map 
\[
\EEsu_\Lambda\to\EFsu_\Lambda
\ ,\
v\mapsto\dd_v\fphi_\Lambda(p) 
\] 
is $\Lambda_\eev$--linear.
\end{definition}

\begin{definition}
\label{def-fibrprod}
Let $\Usu,\Vsu,\Msu\in\Funct$, 
$\fphi\in\Natu(\Usu,\Msu)$, and 
$\fpsi\in\Natu(\Vsu,\Msu)$. Assume that
$\Vsu\sqsubseteq_\fpsi \Msu$. 
The \emph{fiber product} of 
$\Usu$ and $\Vsu$ over $\Msu$ is the functor 
$\Usu\times_\Msu\Vsu\sqsubseteq \mathcal U$
defined by 
\[
(\Usu \times_\Msu\Vsu)_\Lambda:=
\fphi_\Lambda^{-1}({\fpsi}_\Lambda(\mathcal V_\Lambda))
\text{ \,for every }\Lambda\in\Gr.
\]
We set
$
(\Usu\times_\Msu\Vsu)_\varrho:=
\Usu_\varrho\res{{(\Usu\times_\Msu\Vsu)}_\Lambda}
$ 
for every $\varrho\in\hom_\mathsf{Gr}(\Lambda,\Lambda')$.

\end{definition}
\begin{remark}
Let
$\Usu\sqsubseteq \EEsu$,
$\Vsu\sqsubseteq\EFsu$, 
$\fphi\in\Natu(\Usu,\Msu)$ and $\Vsu\sqsubseteq_g\Msu$. From the definition of fiber product it is easily seen that 
$\Usu\times_\Msu\Vsu\sqsubseteq\EEsu$. 
However, the canonical projection 
$p^\Vsu\in\Natu(\Usu\times_\Msu\Vsu,\Vsu)$ 
given by 
\[p^\Vsu_\Lambda:=g^{-1}_\Lambda\circ 
f_\Lambda\res{{(\Usu\times_\Msu\Vsu)}_\Lambda}
\text{ 
\, for every }\Lambda\in\Gr\]
might not necessarily be a smooth morphism.
\end{remark}
\begin{definition}
A  \emph{supermanifold} modeled on a $\Z_2$--graded locally convex space $E=E_\eev\oplus E_\ood$ is a pair $(\Msu,\mathscr A)$ where 
$\Msu\in\Funct$
and $\mathscr A$ is a set of pairs
$(\Usu,\fphi)$ 
satisfying
the following properties.
\begin{itemize}

\item[(i)] 
If $(\mathcal U,\fphi)\in \mathscr A$ then
$\Usu\sqsubseteq\EEsu$,
$
\fphi\in\Natu(\Usu,\Msu)
$,
and $\Usu\sqsubseteq_{\fphi}\Msu$.
\item[(ii)] $\mathscr A$ is an open covering of $\Msu$, i.e., 
\[
\Msu_\Lambda=\bigcup_{(\Usu,\fphi)\in\mathscr A} \fphi_\Lambda(\Usu_\Lambda)
\text{ \,for every }\Lambda\in\Gr.
\] 
\item[(iii)] For every two elements 
$(\Usu,\fphi)$ and $(\Vsu,\fpsi)$ of $\mathscr A$ the canonical projection
\[
p^\Vsu:\Usu\times_\Msu\Vsu\to\Vsu
\]
is a smooth morphism.
\end{itemize}
The set $\mathscr A$ is called an \emph{atlas} of $\Msu$. An 
element of $\mathscr A$ is called an 
\emph{open chart} of $\Msu$. 
\end{definition}
\begin{remark}
In the rest of this article, 
when there is no ambiguity about the atlas of a supermanifold $(\Msu,\mathscr A)$, 
we write $\Msu$ instead of 
$(\Msu,\mathscr A)$.
\end{remark}

\begin{lemma}
\label{-1m}
Let $\Msu$ be a supermanifold and 
$\mathcal U\sqsubseteq_f \mathcal M$. Then
\[
f_\Lambda(\mathcal U_\Lambda)=\Msu_{\eps_\Lambda}^{-1}(f_{\Lambda_0}(\mathcal  U_{\Lambda_0}))
:=
\{p\in\Msu_\Lambda\ :\ \Msu_{\eps_\Lambda}(p)\in f_{\Lambda_0}(\Usu_{\Lambda_0})
\}
\]
for every $\Lambda\in\Gr$.
\end{lemma}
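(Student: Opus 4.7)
The plan is to reduce the lemma to Proposition \ref{apx-opensubfcn}, which already establishes the analogous statement for superdomains, by means of an atlas chart at a chosen point.

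The inclusion $f_\Lambda(\mathcal U_\Lambda)\subseteq \Msu_{\eps_\Lambda}^{-1}(f_{\Lambda_0}(\mathcal U_{\Lambda_0}))$ is immediate from the naturality of $f$ applied to $\eps_\Lambda\in\hom_\Gr(\Lambda,\Lambda_0)$: if $p=f_\Lambda(q)$ with $q\in\mathcal U_\Lambda$, then $\Msu_{\eps_\Lambda}(p)=f_{\Lambda_0}(\mathcal U_{\eps_\Lambda}(q))\in f_{\Lambda_0}(\mathcal U_{\Lambda_0})$.

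For the reverse inclusion, fix $p\in\Msu_\Lambda$ with $\Msu_{\eps_\Lambda}(p)\in f_{\Lambda_0}(\mathcal U_{\Lambda_0})$ and, using condition (ii) of the definition of a supermanifold, choose an open chart $(\mathcal V,g)\in\mathscr A$ together with $q\in\mathcal V_\Lambda\sqsubseteq\EFsu_\Lambda$ such that $p=g_\Lambda(q)$. The idea is to pull $f(\mathcal U)$ back through $g$ into the model space $\EFsu$ and invoke Proposition \ref{apx-opensubfcn} there. Concretely, define a subfunctor $\mathcal V'\sqsubseteq\mathcal V$ by
\[
\mathcal V'_\Lambda':=\bigl\{q'\in\mathcal V_{\Lambda'}\,:\,g_{\Lambda'}(q')\in f_{\Lambda'}(\mathcal U_{\Lambda'})\bigr\}\qquad(\Lambda'\in\Gr).
\]
Since $g_{\Lambda'}$ is a diffeomorphism onto an open subset of $\Msu_{\Lambda'}$ and $f_{\Lambda'}(\mathcal U_{\Lambda'})$ is open in $\Msu_{\Lambda'}$, each $\mathcal V'_{\Lambda'}$ is open in $\EFsu_{\Lambda'}$, so $\mathcal V'\sqsubseteq\EFsu$.

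Applying Proposition \ref{apx-opensubfcn} to $\mathcal V'$ yields an open set $V'\subseteq F_\eev$ with $V'=\mathcal V'_{\Lambda_0}$ and $\mathcal V'_\Lambda=V'+(F\otimes\Lambda^+)_\eev$. Naturality of $g$ at $\eps_\Lambda$ gives $g_{\Lambda_0}(\mathcal V_{\eps_\Lambda}(q))=\Msu_{\eps_\Lambda}(p)\in f_{\Lambda_0}(\mathcal U_{\Lambda_0})$, hence $\mathcal V_{\eps_\Lambda}(q)\in V'$. Writing $q=\mathcal V_{\eps_\Lambda}(q)+(q-\mathcal V_{\eps_\Lambda}(q))$ with the second summand in $(F\otimes\Lambda^+)_\eev$ places $q$ in $\mathcal V'_\Lambda$, and therefore $p=g_\Lambda(q)\in f_\Lambda(\mathcal U_\Lambda)$, as desired. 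The only nontrivial step is verifying that the pullback $\mathcal V'$ is genuinely an open subfunctor of $\EFsu$; the remainder consists of diagram chasing with naturality and a direct appeal to Proposition \ref{apx-opensubfcn}.
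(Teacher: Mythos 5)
Your argument is correct and is exactly the route the paper intends: its proof of this lemma is simply the remark that it follows from Proposition \ref{apx-opensubfcn}, and your pullback $\mathcal V'$ of $f(\Usu)$ through a chart $(\Vsu,g)$ is the natural way to carry out that reduction. The only point you assert rather than verify — that $\mathcal V'$ is an open subfunctor — also requires stability under the structure maps $\EFsu_\varrho$ (not just openness of each $\mathcal V'_\Lambda$), but this is a one-line consequence of the naturality of $f$ and $g$, so the proof stands.
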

\begin{proof}
Follows from Proposition \ref{apx-opensubfcn}. The proof is straightforward and left to the reader.
\end{proof}

The notion of a smooth morphism between two supermanifolds can now be defined using open charts. Let 
$(\Msu,\mathscr A)$ and $(\Nsu,\mathscr B)$ be two supermanifolds and $h\in\Natu(\Msu,\Nsu)$. 
For every 
$(\Usu,\fphi)\in\mathscr A$ and
every $(\Vsu,\fpsi)\in\mathscr B$
the natural transformations $h\circ\fphi\in
\Natu(\Usu,\Nsu)$ and 
$\fpsi\in\Natu(\Vsu,\Nsu)$ define a fiber product 
$\Usu\times_\Nsu\Vsu$.
We say $h$ is a
\emph{smooth morphism} from $\Msu$ to $\Nsu$ 
if for every 
$(\Usu,\fphi)\in\mathscr A$ and every
$(\Vsu,\fpsi)\in\mathscr B$ the canonical projection
$
p^\Vsu:\Usu\times_\Nsu\Vsu\to \Vsu
$
is a smooth morphism. 

\subsection{$\Lambda$--smooth maps}
The next definition will simplify our presentation.
\begin{definition}
\label{lsmoothfl}
Let $E=E_\eev\oplus E_\ood$ and $F=E_\eev\oplus F_\ood$ be $\Z_2$--graded locally convex spaces. Let 
$\Msu$ be a supermanifold modeled on $E$ and 
$\Nsu$ be a supermanifold modeled on $F$. Fix $\Lambda\in\Gr$ and let $p\in\Msu_\Lambda$.
A smooth map
$h_\Lambda:\Msu_\Lambda\to\Nsu_\Lambda$  is called \emph{$\Lambda$--smooth} at  $p$ if
for every open chart $(\Usu,\fphi)$ of $\Msu$ 
where  $p\in \fphi_\Lambda(\Usu_\Lambda)$,
and every open chart $(\Vsu,\fpsi)$ of 
$\Nsu$ where  
$h_\Lambda(p)\in \fpsi_\Lambda(\Vsu_\Lambda)$, the map
\[
\EEsu_\Lambda\to \EFsu_\Lambda
\ ,\ 
v\mapsto \dd_v\left(\fpsi_\Lambda^{-1}\circ
h_\Lambda\circ\fphi_\Lambda \right)(p)
\]
is $\Lambda_\eev$--linear.
If $h_\Lambda$ is $\Lambda$--smooth at every $p\in\Msu_\Lambda$ then $h_\Lambda$ will
simply be called $\Lambda$--smooth.
\end{definition}

\begin{remark}
Let $\Msu$ and $\Nsu$ be supermanifolds 
and $h\in\Natu(\Msu,\Nsu)$. Then $h$ is 
a smooth morphism if and only if $h_\Lambda:\Msu_\Lambda\to\Nsu_\Lambda$ is $\Lambda$--smooth for every $\Lambda\in\Gr$.
\end{remark}

\subsection{The category $\Sman$}
The category of supermanifolds and their smooth morphisms 
will be denoted by $\Sman$ and 
the set of smooth morphisms from 
$\Msu$ to $\Nsu$ will be denoted by 
$\hom_\Sman(\Msu,\Nsu)$.
It is proved in \cite[Thm 3.36]{aldlau} that the category of finite-dimensional supermanifolds (in the sense of Berezin, Kostant, and Leites) is equivalent to a full subcategory of $\Sman$.


\subsection{The superalgebra $C^\infty(\Msu,\FunC)$}
As usual, we 
denote the $\Z_2$--graded vector space $\C\oplus\C$ 
by $\C^{1|1}$. 
For every supermanifold $\mathcal M$ set
\[
C^\infty(\Msu,\FunC):=\hom_{\Sman}(\Msu,\FunC).
\]
The next proposition follows from \cite[Prop. 3.4]{aldlau}.

\begin{proposition}
\label{prpskeleton}
Let $\Usu\sqsubseteq \EEsu$ and set $U:=\Usu_{\Lambda_0}$. There exists a bijection between smooth morphisms $h\in C^\infty(\Usu,\FunC)$ and  families of maps 
$
\{h_m\ :\ m\geq 0\}
$
satisfying the following properties:
\begin{itemize}
\item[(i)] $h_m:U\to \mathrm{Alt}_m(E_\ood,\C)$ for every $m\geq 0$, and the map 
\[
U\times E^m_\ood\to F\ ,\ 
(u,v_1,\ldots,v_m)\mapsto h_m(u)(v_1,\ldots,v_m)
\]
is smooth.
\item[(ii)]
For every $\Lambda\in\Gr$ we have
\begin{equation}
\label{fdkfm}
h_\Lambda(u+v_\eev+v_\ood)=\sum_{k,m\geq 0}\frac{1}{k!m!}
\dd^kh_{m}(u)
(
\underbrace{v_\eev,\ldots,v_\eev}_{k\text{ times}}
)(\underbrace{v_\ood,\ldots,v_\ood}_{m\text{ times}}
)
\end{equation}
where $u\in U$,  $v_\eev\in E_\eev\otimes \Lambda^+_\eev$, and $v_\ood\in E_\ood\otimes \Lambda_\ood$.
\end{itemize}
In  \eqref{fdkfm}  the multilinear functions 
$\dd^kh_{m}(u)$ are extended by linearity to Grassmann variables, i.e.,
\begin{align*}
\dd^kh_m(u)(x_1\lambda_{I_1},\ldots,x_k\lambda_{I_k})&(
y_1\lambda_{J_1},\ldots,y_m\lambda_{J_m})\\
&:=
\dd^kh_m(u)(x_1,\ldots,x_k)(y_1,\ldots,y_m)\prod_{i=1}^k\lambda_{I_i}
\prod_{j=1}^m\lambda_{J_j}.
\end{align*}
\end{proposition}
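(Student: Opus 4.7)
The plan is to prove the bijection in two steps. First, extract the family $\{h_m\}$ from a given smooth morphism $h$, and verify conditions (i)--(ii). Second, show that conversely every family satisfying condition (i) produces a smooth morphism via the formula \eqref{fdkfm}, and that the two constructions are mutually inverse.

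For the extraction step, fix $u \in U$ and $v_1,\ldots,v_m \in E_\ood$. The element $u + \sum_{i=1}^m v_i \otimes \lambda_i$ lies in $\Usu_{\Lambda_m}$ by Proposition~\ref{apx-opensubfcn}, so $h_{\Lambda_m}(u + \sum v_i \otimes \lambda_i) \in \FunC_{\Lambda_m}$ is defined. Expand it in the Grassmann monomial basis $\{\lambda_I\}$ with $\C$-valued coefficients, and define $h_m(u)(v_1,\ldots,v_m)$ to be the coefficient of $\lambda_1\cdots\lambda_m$ (living in the appropriate $\C^{1|1}$-summand depending on the parity of $m$). Linearity in each $v_i$ follows by applying naturality to the rescaling endomorphism $\lambda_i \mapsto t\lambda_i$ of $\Lambda_m$, while the alternating property follows from naturality with respect to the automorphism swapping $\lambda_i$ and $\lambda_j$, which produces a sign on $\lambda_1\cdots\lambda_m$. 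Smoothness of $(u,v_1,\ldots,v_m) \mapsto h_m(u)(v_1,\ldots,v_m)$ is inherited from smoothness of $h_{\Lambda_m}$.

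To prove \eqref{fdkfm}, fix $\Lambda \in \Gr$ and write $p = u + w$ with $w = v_\eev + v_\ood$ as in the statement. Since $\Lambda^+$ is nilpotent, $w$ is nilpotent in $(E \otimes \Lambda)_\eev$, so the Taylor expansion
\[
h_\Lambda(u + w) = \sum_{N \geq 0} \tfrac{1}{N!}\, \dd^N h_\Lambda(u)(w,\ldots,w)
\]
is a finite sum. Iterating the $\Lambda_\eev$-linearity of $\dd h_\Lambda$ shows that each $\dd^N h_\Lambda(u)$ is symmetric and $\Lambda_\eev$-multilinear, so expanding $(v_\eev + v_\ood)^{\otimes N}$ and regrouping by the numbers of even and odd factors yields the double sum over $k,m$ in \eqref{fdkfm}. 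The final step is to identify $\dd^{k+m} h_\Lambda(u)$ evaluated on Grassmann-valued arguments with the extended expression $\dd^k h_m(u)(\ldots)(\ldots)$; this follows by applying naturality to the homomorphism $\Lambda_{k+m} \to \Lambda$ sending the Grassmann generators to the Grassmann components appearing in $v_\eev$ and $v_\ood$, reducing the computation to the definition of $h_m$ given in the extraction step.

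For the reverse direction, take \eqref{fdkfm} as the definition of $h_\Lambda$. The sum is finite by the nilpotency argument above, and joint smoothness in $p$ follows from smoothness of each $h_m$ combined with the fact that \eqref{fdkfm} is polynomial in the Grassmann components. Naturality with respect to $\varrho \in \hom_\Gr(\Lambda,\Lambda')$ is immediate because \eqref{fdkfm} is polynomial in the Grassmann variables and $\varrho$ is an algebra homomorphism. The $\Lambda_\eev$-linearity of the differential $\dd h_\Lambda(p)$ follows by differentiating \eqref{fdkfm} term by term: in each summand the perturbation appears through the $\R$-multilinear maps $\dd^k h_m(u)$, whose natural extension to $(E \otimes \Lambda)_\eev$ is manifestly $\Lambda_\eev$-linear. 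The main obstacle is the verification that the two constructions are genuinely inverse, which ultimately rests on the compatibility identity of the third paragraph: one must check that the Grassmann-extension rule spelled out at the end of the statement is the unique $\Lambda_\eev$-multilinear extension compatible with naturality, so that no information is lost when passing from $h$ to $\{h_m\}$ and back.
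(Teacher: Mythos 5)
Your overall plan (define $h_m$ as the coefficient of $\lambda_1\cdots\lambda_m$ of $h_{\Lambda_m}(u+\sum_i v_i\lambda_i)$, prove \eqref{fdkfm} by a Taylor expansion in the nilpotent fiber, then check the converse direction by hand) is the natural direct route; note that the paper itself does not prove Proposition \ref{prpskeleton} but quotes it from \cite[Prop. 3.4]{aldlau}, so a self-contained argument is welcome. However, two of your steps do not hold up as written. First, the finiteness and exactness of the Taylor expansion of $h_\Lambda$ at $u$ is not a consequence of nilpotency of $w=v_\eev+v_\ood$ alone: $h_\Lambda$ is not an algebra map, and while the even Grassmann coefficients occurring in $v_\eev$ can be pulled out of $\dd^N h_\Lambda$ by $\Lambda_\eev$-multilinearity (Lemma \ref{huefluehl}(ii)), the odd coefficients in $v_\ood$ cannot, since they do not lie in $\Lambda_\eev$. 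To see that $\dd^N h_\Lambda(q)(y_1\nu_1,\ldots,y_N\nu_N)$ vanishes when $N$ exceeds the number of generators (and at every point $q$ of the segment, so that the Taylor remainder dies) you need the auxiliary-generator argument: apply Lemma \ref{huefluehl}(i) to $\varrho\in\hom_\Gr(\Lambda_{n+N},\Lambda_n)$ with $\lambda_{n+j}\mapsto\nu_j$, and use the rescaling endomorphisms $\lambda_{n+j}\mapsto t\lambda_{n+j}$ to show the value is divisible by $\nu_1\cdots\nu_N$. This is exactly the style of argument the paper uses in Lemmas \ref{hhnseries}, \ref{bulletisplus} and \ref{Dxvsdx}, and it is the real content of the proposition; ``nilpotent, hence finite sum'' skips it.

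Second, your identification step invokes a homomorphism $\Lambda_{k+m}\to\Lambda$ ``sending the Grassmann generators to the Grassmann components appearing in $v_\eev$ and $v_\ood$''. No such morphism exists in general: a morphism in $\Gr$ must send each generator to an \emph{odd} element, whereas the coefficients of $v_\eev$ lie in $\Lambda^+_\eev$. The even and odd slots must be treated by different mechanisms: extract the even coefficients by $\Lambda_\eev$-multilinearity and then, using symmetry of mixed partials, recognize the derivatives along body directions $x\in E_\eev$ as the derivatives $\dd^k h_m(u)$ of the skeleton maps; only the odd slots are handled by the auxiliary-generator trick together with your definition of $h_m$. Two smaller points: linearity of $h_m(u)$ in each $v_i$ does not follow from the rescaling endomorphism alone (that yields only degree-one homogeneity); you must add that a smooth homogeneous map of degree one equals its differential at $0$, or use a relabeling morphism $\lambda_{m+1}\mapsto\lambda_i$ to get additivity. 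And in the converse direction, $\Lambda_\eev$-linearity of $\dd h_\Lambda(p)$ is not ``manifest'' term by term: multiplication of a direction by $\mu\in\Lambda^+_\eev$ annihilates its body component, so one must match the body-derivative terms $\dd^{k+1}h_m(u)(z_0,\ldots)$ against the soul-even terms $\dd^{k+1}h_m(u)(\mu z_0,\ldots)$, which works precisely because of the Taylor-coefficient structure of \eqref{fdkfm}. With these repairs your construction does yield the asserted bijection.
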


\begin{remark}
The superspace $\FunC$ is a ring object in $\Funct$ and induces a 
multiplication 
on $C^\infty(\Msu,\FunC)$. Therefore $C^\infty(\Msu,\FunC)$ is the superalgebra of
smooth superfunctions on the supermanifold $\Msu$. 
In this article, we will not need this multiplication.\end{remark}
\begin{definition}
Let $\Msu$ be a supermanifold and 
$h\in C^\infty(\Msu,\FunC)$. The family of maps
$
\{h_m\ :\ m\geq 0\}
$ 
which satisfies \eqref{fdkfm} is called the \emph{skeleton} of $h$.
\end{definition}

\begin{lemma}
\label{hhnseries}
Let $\Usu\sqsubseteq \EEsu$,
$h\in C^\infty(\Usu,\FunC)$, and
$\{h_m\ :\ m\geq 0\}$  be the skeleton of $h$.  If $x_1,\ldots, x_n\in E_\ood$ then
\[
\dd^n h_{\Lambda_n}(u)(x_1\lambda_1,\ldots,x_n\lambda_n)
=
h_n(u)(x_1,\ldots,x_n)\cdot
\lambda_1\cdots\lambda_n
\,\text{ for every }u\in\Usu_{\Lambda_0}.
\]
\begin{proof}
If we set $v_\eev:=\zro_{\Lambda_n}$ and $v_\ood:=t_1x_1\lambda_1+\cdots+t_nx_n\lambda_n$ 
in 
\eqref{fdkfm} 
then it follows that 
$
h_{\Lambda_n}(u+t_1x_1\lambda_1+\cdots+t_nx_n\lambda_n)
$
is a vector-valued 
polynomial in $t_1,\ldots,t_n$ with coefficients in 
$\FunC_{\Lambda_n}$.
The leading term of this polynomial is 
\[
h_n(u)(x_1,\ldots,x_n)
\cdot\lambda_1\cdots\lambda_n
\cdot
t_1\cdots 
t_n.
\]
It follows that
\begin{align*}
\dd^n h_{\Lambda_n}(u)&
(x_1\lambda_1,\ldots,x_n\lambda_n)
\\
&=\frac{\partial}{\partial t_1}
\cdots
\frac{\partial}{\partial t_n}
h_{\Lambda_n}(u+t_1x_1\lambda_1+\cdots+t_nx_n\lambda_n)
\res{t_1=\cdots=t_n=0}\\
&=h_n(u)(x_1,\ldots,x_n)\cdot\lambda_1\cdots\lambda_n.
\qedhere
\end{align*}
\end{proof}
\end{lemma}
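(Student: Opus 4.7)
The plan is to specialize the skeleton expansion \eqref{fdkfm} to a one-parameter family of purely odd perturbations and then read off the top coefficient via $\partial_{t_1}\cdots\partial_{t_n}$ at $0$.

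Concretely, I would set $\Lambda=\Lambda_n$, $v_\eev:=\zro_{\Lambda_n}$, and $v_\ood:=t_1x_1\lambda_1+\cdots+t_nx_n\lambda_n$ in \eqref{fdkfm}. Because $\dd^kh_m(u)$ is $k$-linear and we are feeding the zero vector into each of its $k$ slots, every term with $k\geq 1$ vanishes, leaving
\[
h_{\Lambda_n}(u+v_\ood)=\sum_{m\geq 0}\frac{1}{m!}\,\dd^0h_m(u)(v_\ood,\ldots,v_\ood)=\sum_{m\geq 0}\frac{1}{m!}h_m(u)(v_\ood,\ldots,v_\ood).
\]

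The next step is to expand $h_m(u)(v_\ood,\ldots,v_\ood)$ by multilinearity in the Grassmann variables as prescribed by Proposition~\ref{prpskeleton}. Each summand has the form
\[
t_{j_1}\cdots t_{j_m}\,h_m(u)(x_{j_1},\ldots,x_{j_m})\,\lambda_{j_1}\cdots\lambda_{j_m},
\]
so the whole expression is a polynomial in $t_1,\ldots,t_n$ with coefficients in $\FunC_{\Lambda_n}$. Since $\lambda_i^2=0$, only tuples $(j_1,\ldots,j_m)$ consisting of distinct indices can contribute; in particular, all $m>n$ terms are zero, and the monomial $t_1\cdots t_n$ can appear only from $m=n$. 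For $m=n$ one sums over permutations $\sigma\in S_n$, and using both the alternating property of $h_n$ and the sign produced by reordering $\lambda_{\sigma(1)}\cdots\lambda_{\sigma(n)}$ into $\lambda_1\cdots\lambda_n$, the two sign factors cancel and each permutation contributes the same value $h_n(u)(x_1,\ldots,x_n)\,\lambda_1\cdots\lambda_n\,t_1\cdots t_n$. Dividing by $n!$ from $\frac{1}{m!}$ yields the leading term
\[
h_n(u)(x_1,\ldots,x_n)\,\lambda_1\cdots\lambda_n\,t_1\cdots t_n,
\]
while every other monomial in $t_1,\ldots,t_n$ has at least one $t_i$ missing.

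Finally, the identity $\dd^n h_{\Lambda_n}(u)(x_1\lambda_1,\ldots,x_n\lambda_n)=\partial_{t_1}\cdots\partial_{t_n}h_{\Lambda_n}(u+v_\ood)|_{t=0}$ (a direct consequence of multilinearity of $\dd^n h_{\Lambda_n}(u)$) extracts precisely the coefficient of $t_1\cdots t_n$, which by the previous step equals $h_n(u)(x_1,\ldots,x_n)\,\lambda_1\cdots\lambda_n$. No genuine obstacle appears; the only care required is the sign bookkeeping in the $m=n$ sum, which is resolved by the cancellation of $\mathrm{sgn}(\sigma)^2=1$.
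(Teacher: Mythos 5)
Your proposal is correct and follows essentially the same route as the paper: specialize \eqref{fdkfm} with $v_\eev=\zro_{\Lambda_n}$ and $v_\ood=t_1x_1\lambda_1+\cdots+t_nx_n\lambda_n$, identify the coefficient of $t_1\cdots t_n$ as $h_n(u)(x_1,\ldots,x_n)\lambda_1\cdots\lambda_n$, and extract it with $\partial_{t_1}\cdots\partial_{t_n}$ at $0$. You merely make explicit the sign cancellation and the vanishing of repeated-index and $k\geq 1$ terms, which the paper leaves implicit.
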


\begin{lemma}
\label{huefluehl}
Let $\Usu\sqsubseteq\EEsu$, 
$h\in\hom_\Sman(\Usu,\EFsu)$, 
$\Lambda\in\Gr$, and $p\in\Usu_\Lambda$.
\begin{itemize}
\item[(i)] If
$\varrho\in\hom_\Gr(\Lambda,\Lambda')$ for some $\Lambda'\in\Gr$ then 
\[
\dd^nh_{\Lambda'}
\big(
\Usu_\varrho(p)
\big)
\big(
\EEsu_\varrho(v_1),\ldots,
\EEsu_\varrho(v_n)
\big)=\EFsu_\varrho
\Big(
\dd^nh_\Lambda(p)(v_1,\ldots,v_n)
\Big)
\]
for every $v_1,\ldots,v_n\in\EEsu_\Lambda$.

\item[(ii)] The map
\[
\EEsu_\Lambda\times\cdots\times\EEsu_\Lambda
\to \EFsu_\Lambda
\ ,\ 
(v_1,\cdots,v_n)\mapsto 
\dd^n h_\Lambda(p)(v_1,\ldots,v_n)
\]
is $\Lambda_\eev$--linear in $v_1,\ldots,v_n$.
\end{itemize} 
\end{lemma}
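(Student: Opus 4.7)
My plan is as follows.

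For part (i), I would argue by induction on $n$, using the naturality identity $h_{\Lambda'}\circ\Usu_\varrho=\EFsu_\varrho\circ h_\Lambda$. For $n=1$, observe that $\Usu_\varrho$ is the restriction of the continuous $\R$-linear map $\EEsu_\varrho$, so $\Usu_\varrho(p)+t\,\EEsu_\varrho(v)=\Usu_\varrho(p+tv)$ for sufficiently small $t$. The defining limit
\[
\dd h_{\Lambda'}\bigl(\Usu_\varrho(p)\bigr)\bigl(\EEsu_\varrho(v)\bigr)=\lim_{t\to 0}\tfrac{1}{t}\bigl(h_{\Lambda'}(\Usu_\varrho(p+tv))-h_{\Lambda'}(\Usu_\varrho(p))\bigr)
\]
then equals $\EFsu_\varrho(\dd h_\Lambda(p)(v))$ by naturality and the continuous $\R$-linearity of $\EFsu_\varrho$. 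The inductive step is identical once one applies this to the iterated difference quotient defining $\dd^j$.

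For part (ii), the base case $n=1$ is the defining property of a smooth morphism. For $n\geq 2$, I would use the natural analogue of Proposition~\ref{prpskeleton} for $\EFsu$-valued morphisms (proved by the same argument as the $\FunC$-valued version): there is a skeleton $\{h_m:U\to\mathrm{Alt}_m(E_\ood,F):m\geq 0\}$ such that
\[
h_\Lambda(u+v_\eev+v_\ood)=\sum_{k,m\geq 0}\tfrac{1}{k!\,m!}\,\dd^k h_m(u)(v_\eev,\ldots,v_\eev)(v_\ood,\ldots,v_\ood),
\]
where $v_\eev\in E_\eev\otimes\Lambda^+_\eev$, $v_\ood\in E_\ood\otimes\Lambda_\ood$, the sum is finite (since $\Lambda^+$ is nilpotent), and $\dd^k h_m(u)$ is extended to Grassmann arguments by pulling out Grassmann scalars multiplicatively. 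Writing $p=u_0+p^+$ and each $v_j=v_j^{(0)}+v_j^+$ with $u_0, v_j^{(0)}\in E_\eev$ and $p^+, v_j^+\in(E\otimes\Lambda^+)_\eev$, I would substitute into the skeleton formula and expand $\partial^n/\partial t_1\cdots\partial t_n\big|_{t_1=\cdots=t_n=0}$ of $h_\Lambda(p+t_1v_1+\cdots+t_nv_n)$ via the Leibniz and Fa\`a di Bruno rules. Each resulting summand is an evaluation of some $\dd^k h_m(u_0)$ on a tuple of classical and nilpotent components of the $v_j$'s, with the Grassmann coefficients carried by the $v_j^+$'s factoring out as multiplicative scalars in $\Lambda$. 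Rescaling any one $v_j$ by $\lambda\in\Lambda_\eev$ therefore multiplies every summand by $\lambda$, which is the $\Lambda_\eev$-linearity in that slot; combined with the $\R$-multilinearity and the symmetry of $\dd^n h_\Lambda(p)$ in its arguments, this yields the claim.

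The main obstacle is part (ii): the definition of a smooth morphism only supplies $\Lambda_\eev$-linearity of the first differential, so one must exploit some additional structural information—here the polynomial-in-Grassmann description of $h_\Lambda$ encoded in the skeleton—in order to propagate the property to all higher derivatives.
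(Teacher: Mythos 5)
Part (i) of your proposal is correct and is the paper's own argument (differentiate the naturality relation $h_{\Lambda'}\circ\Usu_\varrho=\EFsu_\varrho\circ h_\Lambda$, use that $\Usu_\varrho$ is the restriction of the continuous linear map $\EEsu_\varrho$, and induct on $n$). For part (ii), however, your argument has a genuine gap at its key step. In the skeleton expansion of $h_\Lambda(p+t_1v_1+\cdots+t_nv_n)$ the base point of the maps $\dd^kh_m$ is $u_0+\sum_j t_jv_j^{(0)}$, so after applying $\partial_{t_1}\cdots\partial_{t_n}$ some summands come from differentiating this base point in the classical direction $v_j^{(0)}$, not from selecting a nilpotent component of $v_j$. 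Now take $\lambda\in\Lambda^+_\eev$ and $v_j$ with $v_j^{(0)}\neq 0$ (the only case in which anything needs proving): $\lambda v_j$ has vanishing classical part, and $v_j^{(0)}\lambda$ is shifted into the even--nilpotent slots. Hence the summands of the rescaled expansion are \emph{not} the old summands multiplied by $\lambda$: the base-point-derivative terms for $v_j$ disappear and must be recovered from terms in which $v_j^{(0)}\lambda$ occupies an even slot of $\dd^{k+1}h_m(u_0)$, which requires the compatibility of the derivative of $u\mapsto\dd^kh_m(u)$ in the direction $v_j^{(0)}$ with the insertion of an extra even argument, together with a check that the coefficients $1/k!$ and $(k+1)\cdot 1/(k+1)!$ match. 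Your sentence ``rescaling any one $v_j$ by $\lambda$ multiplies every summand by $\lambda$'' is false term-by-term in exactly this case, and this case is the whole content of the lemma, so the proof as written does not go through (it is repairable along the lines just indicated). You also invoke an $\EFsu$-valued analogue of Proposition~\ref{prpskeleton}, which the paper states only for $\FunC$-valued morphisms; it does hold (it is the general morphism theorem of Alldridge--Laubinger), but this should be said.

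Moreover, your premise that the first-order definition of smooth morphism cannot suffice is mistaken, and this is how the paper avoids all of the above. $\Lambda$-smoothness gives $\Lambda_\eev$-linearity of $\dd h_\Lambda(q)$ at \emph{every} point $q\in\Usu_\Lambda$; since $\dd^nh_\Lambda(p)(v_1,\ldots,v_n)$ is by definition a limit of difference quotients, taken in the base point, of $q\mapsto\dd^{n-1}h_\Lambda(q)(v_1,\ldots,v_{n-1})$, the induction hypothesis yields $\Lambda_\eev$-linearity in $v_1,\ldots,v_{n-1}$ after passing to the limit, and linearity in the remaining slot $v_n$ follows from the symmetry of $\dd^nh_\Lambda(p)$ recalled in Section~2. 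This short induction is the paper's proof of (ii); no skeletons are needed.
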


\begin{proof}
(i) The case $n=1$ follows from differentiating the relation $h_{\Lambda'}\circ\Usu_\varrho=
\EFsu_\varrho\circ h_\Lambda$ and using the fact that
$\dd_v\Usu_\varrho(\zro_\Lambda)=\EEsu_\varrho(v)$ for every $v\in \EEsu_\Lambda$.
The case $n>1$ follows by induction on $n$.

(ii) The case $n=1$ is an immediate consequence of $\Lambda$--smoothness of $h_\Lambda$ 
and the case $n>1$ follows by induction on $n$.
\end{proof}

\begin{remark}
\label{remanalytictheory}
If $\Man$ denotes the category of analytic manifolds, then after minor modifications the definitions and results of this section will remain valid and therefore lead to the category of analytic supermanifolds. 
\end{remark}

\section{Lie Supergroups}
\label{sec-liesg}

Throughout this section $E=E_\eev\oplus E_\ood$ will be a Mackey complete $\Z_2$--graded locally convex space.
 For more information on the notion of Mackey completeness see
\cite[Sec. 2]{krieglmichor}.
Note that every sequentially complete locally convex space is Mackey complete.

\subsection{Lie supergroups as group objects in $\Sman$}
In view of our categorical approach to supergeometry,
a Lie supergroup will be a group object in the category $\Sman$. For background on group objects in categories 
see \cite[Sec. II.3.10]{gelfandmanin}. 

\begin{definition}
\label{defblgp}
A supermanifold 
$\Gsu\in\Sman$ is called a
\emph{Lie supergroup} if it
is a group object in $\Sman$ and
the Lie group $\Gsu_{\Lambda_0}$ has 
a smooth exponential map.

\end{definition}

In the rest of Section \ref{sec-liesg} we assume that
$\Gsu$ is a Lie supergroup modeled on $E=E_\eev\oplus E_\ood$.
If $\Lambda\in\Gr$ then  
$\Gsu_\Lambda$ is a Lie group modeled on the locally convex space $(E\otimes \Lambda)_\eev$. 
For every $\Lambda\in\Gr$, the identity element of $\Gsu_\Lambda$  will be denoted by $\yek_\Lambda$.

Let  $\mu:\Gsu\times\Gsu\to\Gsu$ denote the multiplication morphism of $\Gsu$. For every $\Lambda\in\Gr$, if $g\in \Gsu_\Lambda$ then we define
the left and right translation maps
$(l_g)_\Lambda:\Gsu_\Lambda\to\Gsu_\Lambda$
and 
$(r_g)_\Lambda:\Gsu_\Lambda\to\Gsu_\Lambda$
by
\[
(l_g)_\Lambda(x):=gx:=\mu_\Lambda(g,x)\text{ and }
(r_g)_\Lambda(x):=xg:=\mu_\Lambda(x,g).
\]
Note that $(l_g)_\Lambda$ and $(r_g)_\Lambda$
are $\Lambda$--smooth. If $g\in\Gsu_{\Lambda_0}$ then
based on Remark \ref{ntrlinj} we obtain smooth morphisms $l_g,r_g\in\hom_\Sman(\mathcal G,\mathcal G)$.

\subsection{A local Lie supergroup}

Fix an open chart $(\Vsu,\fphi)$ of $\Gsu$ such that $\Vsu\sqsubseteq \EEsu$. After a suitable 
translation we can assume that
$\zro_{\Lambda_0}\in\Vsu_{\Lambda_0}$, and 
$\fphi_{\Lambda_0}(\zro_{\Lambda_0})=\yek_{\Lambda_0}$. It follows immediately that $\zro_\Lambda\in\Vsu_\Lambda$ and $\fphi_\Lambda(\zro_\Lambda)=\yek_\Lambda$.

The multiplication and inversion of $\Gsu$ can be pulled back to 
$\Vsu$ as follows. Fix $\Lambda\in\Gr$. For every
$v_1,v_2\in\Vsu_\Lambda$ if
$
\mu_\Lambda(f_\Lambda(v_1),f_\Lambda(v_2))
\in 
f_\Lambda(\Vsu_\Lambda)
$
then we set
\begin{align}
\label{operbullet}
v_1\bullet v_2:=
f_\Lambda^{-1}\big(
\mu_\Lambda(f_\Lambda(v_1),f_\Lambda(v_2))
\big)
.
\end{align}
Similarly, for every $v\in\Vsu_\Lambda$, if $f_\Lambda(v)^{-1}\in f_\Lambda(\Vsu_\Lambda)$ then we set
\begin{align}
\label{operbullet2}
v^{-1}:=f_\Lambda^{-1}\big(f_\Lambda(v)^{-1}\big).
\end{align}
Note that the product $v_1\bullet v_2$  and 
the inverse $v^{-1}$ are not necessarily defined 
for all choices of $v_1,v_2,v\in \Vsu_\Lambda$. Nevertheless, the following statement holds.

\begin{lemma}
\label{VVVsubU}
There exists a $\Usu\sqsubseteq \mathcal V$ which satisfies the following properties:
\begin{itemize}
\item[(i)]
$\zro_{\Lambda}\in\Usu_{\Lambda}$ and $\zro_\Lambda\bullet u=u\bullet\zro_\Lambda=u$ 
for every $u\in\Usu_\Lambda$.
\item[(ii)] If $u\in\Usu_\Lambda$ for some $\Lambda\in\Gr$ then $u^{-1}$ is defined and
belongs to $\Usu_\Lambda$. Moreover, $u\bullet u^{-1}=u^{-1}\bullet u=\zro_\Lambda$.
\item[(iii)] The product $u_1\bullet u_2\bullet u_3$ is defined for every $\Lambda\in\Gr$ 
and for every $u_1,u_2,u_3\in\Usu_\Lambda$.
\end{itemize}
\end{lemma}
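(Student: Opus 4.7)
The plan is to choose a sufficiently small open neighborhood $U$ of $\zro_{\Lambda_0}$ in the underlying Lie group $\Gsu_{\Lambda_0}$ (via the chart $\fphi_{\Lambda_0}$) and then lift it to a superdomain $\Usu$ using the canonical description in Proposition~\ref{apx-opensubfcn}. The point is that the conditions (i)--(iii) are all set-theoretic conditions on the images $\fphi_\Lambda(\Usu_\Lambda) \subseteq \Gsu_\Lambda$, and by Lemma~\ref{-1m} these images are determined entirely by their projections to $\Gsu_{\Lambda_0}$ under $\Gsu_{\eps_\Lambda}$. Combined with the naturality of the multiplication morphism $\mu$, this reduces every claim at level $\Lambda$ to the corresponding claim at level $\Lambda_0$, where we are dealing with an ordinary Lie group.

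First, I would choose $U \subseteq \Vsu_{\Lambda_0}$ open with $\zro_{\Lambda_0} \in U$ and arrange, by continuity of multiplication and inversion in the Lie group $\Gsu_{\Lambda_0}$ at the identity $\yek_{\Lambda_0} = \fphi_{\Lambda_0}(\zro_{\Lambda_0})$, that $\fphi_{\Lambda_0}(U)^{-1} \subseteq \fphi_{\Lambda_0}(U)$ and $\fphi_{\Lambda_0}(U) \cdot \fphi_{\Lambda_0}(U) \cdot \fphi_{\Lambda_0}(U) \subseteq \fphi_{\Lambda_0}(\Vsu_{\Lambda_0})$. Then define $\Usu_\Lambda := U + (E \otimes \Lambda^+)_\eev$; by Proposition~\ref{apx-opensubfcn} this gives a superdomain $\Usu \sqsubseteq \EEsu$, and since the analogous formula holds for $\Vsu_\Lambda = \Vsu_{\Lambda_0} + (E \otimes \Lambda^+)_\eev$, we automatically obtain $\Usu \sqsubseteq \Vsu$.

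Part (i) is immediate: $\zro_\Lambda \in \Usu_\Lambda$ because $\zro_{\Lambda_0} \in U$, and $\fphi_\Lambda(\zro_\Lambda) = \yek_\Lambda$ gives the identity laws for $\bullet$. For parts (ii) and (iii), the key reduction uses that $\mu$ and $\fphi$ are natural transformations, so
\[
\Gsu_{\eps_\Lambda}\bigl(\mu_\Lambda(\fphi_\Lambda(u_1),\fphi_\Lambda(u_2))\bigr)
=\mu_{\Lambda_0}\bigl(\fphi_{\Lambda_0}(\EEsu_{\eps_\Lambda}(u_1)),\fphi_{\Lambda_0}(\EEsu_{\eps_\Lambda}(u_2))\bigr),
\]
and $\EEsu_{\eps_\Lambda}(u_i) \in U$ for $u_i \in \Usu_\Lambda$. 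By Lemma~\ref{-1m} applied to $(\Vsu,\fphi)$, membership of a point $p \in \Gsu_\Lambda$ in $\fphi_\Lambda(\Vsu_\Lambda)$ is equivalent to $\Gsu_{\eps_\Lambda}(p) \in \fphi_{\Lambda_0}(\Vsu_{\Lambda_0})$. Thus to verify that $u_1 \bullet u_2$ and then $(u_1 \bullet u_2) \bullet u_3$ are defined, it suffices to check that the corresponding $\Lambda_0$-projections land in $\fphi_{\Lambda_0}(\Vsu_{\Lambda_0})$, which follows from the triple-product condition on $U$.

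Similarly, for (ii), the $\Lambda_0$-projection of $\fphi_\Lambda(u)^{-1}$ equals $\fphi_{\Lambda_0}(\EEsu_{\eps_\Lambda}(u))^{-1} \in \fphi_{\Lambda_0}(U)^{-1} \subseteq \fphi_{\Lambda_0}(U)$, so $u^{-1}$ is defined and its $\Lambda_0$-projection lies in $U$, giving $u^{-1} \in U + (E\otimes\Lambda^+)_\eev = \Usu_\Lambda$. The identities $u \bullet u^{-1} = u^{-1} \bullet u = \zro_\Lambda$ then follow from the group axioms in $\Gsu_\Lambda$ together with $\fphi_\Lambda^{-1}(\yek_\Lambda) = \zro_\Lambda$. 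The only genuinely nontrivial point is the systematic use of Lemma~\ref{-1m} and naturality of $\mu$ to pass between levels; once that dictionary is in place, the argument is routine reduction to the Lie group $\Gsu_{\Lambda_0}$.
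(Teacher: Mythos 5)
Your proposal is correct and follows essentially the same route as the paper: choose a suitable identity neighborhood $U\subseteq\Vsu_{\Lambda_0}$ making (i)--(iii) hold at level $\Lambda_0$, define $\Usu_\Lambda$ as its preimage under $\Vsu_{\eps_\Lambda}$ (equivalently $U+(E\otimes\Lambda^+)_\eev$, by Proposition~\ref{apx-opensubfcn}), and reduce the general case to $\Lambda_0$ via naturality of $\fphi$ and $\mu$ together with Lemma~\ref{-1m}. You merely spell out the details that the paper's one-paragraph proof leaves implicit.
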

\begin{proof}
Since $\Gsu_{\Lambda_0}$ is a Lie group, it is possible to choose 
$\Usu_{\Lambda_0}\subseteq \Vsu_{\Lambda_0}$ suitably such that properties (i)-(iii) hold in the special case $\Lambda=\Lambda_0$. 
 For every $\Lambda\in\Gr$ set
\[
\Usu_\Lambda:=\Vsu_{\varepsilon_\Lambda}^{-1}
(\Usu_{\Lambda_0})
:=\{\,p\in\Vsu_\Lambda\ :\ \Vsu_{\varepsilon_\Lambda}(p)\in\Usu_{\Lambda_0}\,\}.
\]
Properties (i)-(iii) for general $\Lambda$ 
follow from naturality of $f$ and Lemma \ref{-1m}.
\end{proof}

\subsection{The exponential map for $\Gsu_\Lambda$}
Let $(\Usu,f)$ denote the open chart of $\Gsu$ obtained by Lemma \ref{VVVsubU}.
Lemma 
\ref{-1m} implies that 
the set 
\[
f_{\Lambda_{n+1}}^{-1}
\big(
\ker(\Gsu_{\eps_{n+1,n}})
\big)
=\big(E\otimes(\Lambda_n\cdot\lambda_{n+1})\big)_\eev
\subseteq \Usu_{\Lambda_{n+1}}
\]
is closed under the 
multiplication and inversion defined in \eqref{operbullet} 
and \eqref{operbullet2}. 
Thus, with the latter operations,
$\big(E\otimes(\Lambda_n\cdot\lambda_{n+1})\big)_\eev$
is
a Lie group. The next lemma explicitly 
describes 
the multiplication of $\big(E\otimes(\Lambda_n\cdot\lambda_{n+1})\big)_\eev$.
\begin{lemma}
\label{bulletisplus}
If $x,y\in \big(E\otimes(\Lambda_n\cdot\lambda_{n+1})\big)_\eev
$, then $x\bullet y=x+y$. 
\end{lemma}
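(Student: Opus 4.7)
My approach is to pull the multiplication back through the chart $(\Usu,f)$ and Taylor-expand at the identity. Let $\tilde\mu\in\hom_\Sman(\Usu',\Usu)$ denote the smooth morphism defined on the appropriate open subfunctor $\Usu'\sqsubseteq\Usu\times\Usu$ by the relation $f\circ\tilde\mu=\mu\circ(f\times f)$. By construction, $\tilde\mu_\Lambda(v_1,v_2)=v_1\bullet v_2$ whenever the right-hand side is defined, so the claim reduces to proving $\tilde\mu_{\Lambda_{n+1}}(x,y)=x+y$ for $x,y\in(E\otimes\Lambda_n\lambda_{n+1})_\eev$. The strategy is to expand the left-hand side as a power series in real parameters and observe that only the first-order contribution survives.

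First I would parametrise by scalars: write $x=\sum_\alpha t_\alpha v_\alpha$ and $y=\sum_\beta s_\beta w_\beta$ as finite sums with $t_\alpha,s_\beta\in\R$ and fixed $v_\alpha,w_\beta\in E\otimes\Lambda_n\lambda_{n+1}$, and regard $\tilde\mu_{\Lambda_{n+1}}(x,y)$ as a smooth function of the parameters $(t_\alpha,s_\beta)$. Combining the Fa\`a di Bruno formula (Lemma \ref{faadibruno}) with the skeleton expansion of Proposition \ref{prpskeleton}---applied either directly in its superdomain-valued analogue, or scalar-wise by post-composing $\tilde\mu$ with continuous linear functionals on $E$---I would express each Taylor coefficient as a value of some $\dd^k\tilde\mu_m(\zro_{\Lambda_{n+1}},\zro_{\Lambda_{n+1}})$ whose $k+m$ arguments carry Grassmann coefficients drawn from $\Lambda_n\lambda_{n+1}$.

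The constant term vanishes, since the identity axiom gives $\tilde\mu_{\Lambda_0}(\zro_{\Lambda_0},\zro_{\Lambda_0})=\zro_{\Lambda_0}$. Every monomial of total degree $k\geq 2$ in $(t_\alpha,s_\beta)$ carries a product of at least two Grassmann factors from $\Lambda_n\lambda_{n+1}$, hence contains $\lambda_{n+1}^2=0$ and vanishes automatically. Thus only the first-order terms survive. The identity axioms $z\bullet\zro_\Lambda=\zro_\Lambda\bullet z=z$ force the two partial differentials of $\tilde\mu$ at $(\zro_{\Lambda_{n+1}},\zro_{\Lambda_{n+1}})$---on both the even and the odd components---to be the identity map, so the surviving linear contributions sum to exactly $x+y$.

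The main technical point, and the only real obstacle, is the bookkeeping that links the analytic order of differentiation to the Grassmann degree of the coefficient. This is precisely what Lemma \ref{huefluehl}(i) delivers when applied to the scaling homomorphisms $\varrho_s$ introduced in the proof of Proposition \ref{apx-opensubfcn}: naturality in $\varrho_s$ combined with the $\Lambda_\eev$-multilinearity of Lemma \ref{huefluehl}(ii) forces the derivatives of $\tilde\mu_{\Lambda_{n+1}}$ at the origin to be Grassmann-homogeneous of the prescribed degree. Once this homogeneity is in place, the vanishing $\lambda_{n+1}^2=0$ truncates the expansion cleanly to its linear part and the lemma follows.
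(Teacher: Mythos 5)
Your strategy is genuinely different from the paper's. The paper never differentiates: it writes $x=x_1\cdot\lambda_{n+1}$, $y=y_1\cdot\lambda_{n+1}$, lifts $y$ to a fresh generator by working with $x':=x$ and $y':=y_1\cdot\lambda_{n+2}$ in $\Usu_{\Lambda_{n+2}}$, and then pins down $x'\bullet y'$ purely by naturality under three explicit morphisms of $\Gr$ (the projection $\eps_{n+2,n+1}$, the morphism sending $\lambda_{n+1}\mapsto 0$, $\lambda_{n+2}\mapsto\lambda_{n+1}$, and the morphism folding $\lambda_{n+2}$ onto $\lambda_{n+1}$, which kills the one unknown term because it produces $\lambda_{n+1}^2=0$). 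Your proof instead stays inside $\Lambda_{n+1}$, Taylor-expands the pulled-back multiplication $\tilde\mu$ at the origin, and argues that nilpotency truncates the expansion at first order, the identity axiom of Lemma \ref{VVVsubU}(i) supplying $x+y$. That idea is sound and can be completed; it buys an argument with no auxiliary Grassmann generator, at the price of more analytic input (polynomiality via the skeleton as in Lemma \ref{hhnseries}, applied scalar-wise through continuous linear functionals plus Hahn--Banach, whereas the paper's proof is purely formal).

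The one step that does not work as written is the homogeneity argument. The scaling morphisms $\varrho_s$ from the proof of Proposition \ref{apx-opensubfcn} rescale \emph{all} generators, so elements of $\big(E\otimes(\Lambda_n\cdot\lambda_{n+1})\big)_\eev$, whose coefficients $\lambda_I\lambda_{n+1}$ have varying length, are not eigenvectors of $\EEsu_{\varrho_s}$; hence ``the derivatives are Grassmann-homogeneous of the prescribed degree'' does not follow from naturality in that family, and the claim that a degree-$k$ monomial ``automatically'' carries $\lambda_{n+1}^k$ is exactly the homogeneity you still need (note also that $\Lambda_\eev$-multilinearity from Lemma \ref{huefluehl}(ii) cannot extract the odd factor $\lambda_{n+1}$ by itself). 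The repair is immediate: take instead $\varrho_s\in\hom_\Gr(\Lambda_{n+1},\Lambda_{n+1})$ with $\varrho_s(\lambda_i)=\lambda_i$ for $i\leq n$ and $\varrho_s(\lambda_{n+1})=s\lambda_{n+1}$. Then $\EEsu_{\varrho_s}(x)=sx$ and $\EEsu_{\varrho_s}(y)=sy$, and naturality of $\tilde\mu$ gives $\tilde\mu_{\Lambda_{n+1}}(stx,suy)=\EEsu_{\varrho_s}\big(\tilde\mu_{\Lambda_{n+1}}(tx,uy)\big)$. Writing the polynomial $\tilde\mu_{\Lambda_{n+1}}(tx,uy)=\sum_{j,k}c_{j,k}t^ju^k$ and comparing coefficients yields $\EEsu_{\varrho_s}(c_{j,k})=s^{j+k}c_{j,k}$ for all $s$; since $\EEsu_{\varrho_s}$ has only the eigenvalues $1$ and $s$, every $c_{j,k}$ with $j+k\geq 2$ vanishes, $c_{0,0}=\zro_{\Lambda_{n+1}}$, and the identity axiom gives $c_{1,0}=x$, $c_{0,1}=y$, so $x\bullet y=x+y$. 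With that substitution (and with Faà di Bruno and Proposition \ref{prpskeleton} really only needed to know the expansion is a finite polynomial) your argument is a correct alternative to the paper's.
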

\begin{proof}
Write $x=x_1\cdot\lambda_{n+1}$ and 
$y=y_1\cdot\lambda_{n+1}$ where 
$x_1,y_1\in (E\otimes\Lambda_n)_\ood$. 
Set $\varrho:=\varepsilon_{n+2,n+1}$, 
$x':=\Usu_{\iota_{n+1,n+2}}(x)
\in\Usu_{\Lambda_{n+2}}
$, and $y':=y_1\cdot\lambda_{n+2}\in\Usu_{\Lambda_{n+2}}$.
Then 
\[
\Usu_\varrho (x'\bullet y')= \Usu_\varrho(x')\bullet\Usu_\varrho(y')=
\Usu_\varrho(x')\bullet
\zro_{\Lambda_{n+1}}=\Usu_\varrho(x')
\]
and therefore 
$
x'\bullet y'
=
x'+w\cdot\lambda_{n+2}+
w'\cdot\lambda_{n+1}\lambda_{n+2}
$
where $w\in(E\otimes\Lambda_n)_\ood$
and $w'\in(E\otimes\Lambda_n)_\eev$.

Similarly if we take 
$
\varrho'\in\hom_\Gr(\Lambda_{n+2},\Lambda_{n+1})
$
defined by
\[
\varrho'(\lambda_i):=
\begin{cases}
\lambda_i&\text{ if }1\leq i\leq n,\\
0&\text{ if }i=n+1,\\
\lambda_{n+1}&\text{ if }i=n+2
\end{cases}
\]
then 
$
w\cdot\lambda_{n+1}=\Usu_{\varrho'}(x'\bullet y')=
\Usu(x')\bullet \Usu(y')=\zro_{\Lambda_{n+1}}\bullet
\Usu_{\varrho'}(y')
=y_1\cdot\lambda_{n+1}.
$
It follows that $w\cdot\lambda_{n+2}=y'$ and therefore $x'\bullet y'=x'+y'+w'\cdot\lambda_{n+1}\lambda_{n+2}$.
Finally, consider $\varrho''\in\hom(\Lambda_{n+2},\Lambda_{n+1})$ defined by
\[
\varrho''(\lambda_i):=\begin{cases}
\lambda_i&\text{ if }1\leq i\leq n,\\
\lambda_{n+1}&\text{ if }n+1\leq i\leq n+2.
\end{cases}
\]
Then 
\begin{align*}
x\bullet y=\Usu_{\varrho''}(x')\bullet\Usu_{\varrho''}(y')
=\Usu_{\varrho''}(x'\bullet y')=\Usu_{\varrho''}(x'+y'+w'\cdot\lambda_{n+1}\lambda_{n+2})
=x+y
\end{align*}
which completes the proof of the lemma.
\end{proof}

\begin{proposition}
\label{glhasexpsmth}
For every $n\geq 0$ 
the Lie group
$\Gsu_{\Lambda_n}$ has a smooth exponential map. 
\end{proposition}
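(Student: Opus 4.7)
I proceed by induction on $n$, with the base case $n = 0$ built into Definition \ref{defblgp}. For the inductive step, suppose $\exp_n: \Lie(\Gsu_{\Lambda_n}) \to \Gsu_{\Lambda_n}$ is smooth. The functorial morphisms $\sigma := \Gsu_{\iota_{n,n+1}}: \Gsu_{\Lambda_n} \to \Gsu_{\Lambda_{n+1}}$ and $\pi := \Gsu_{\varepsilon_{n+1,n}}: \Gsu_{\Lambda_{n+1}} \to \Gsu_{\Lambda_n}$ are smooth Lie group homomorphisms satisfying $\pi \circ \sigma = \mathrm{id}$, as $\varepsilon_{n+1,n}\circ\iota_{n,n+1}=\mathrm{id}_{\Lambda_n}$ in $\Gr$. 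Let $N$ denote the identity component of $\ker \pi$. Combining Lemma \ref{-1m} with Lemma \ref{bulletisplus}, the chart $f_{\Lambda_{n+1}}$ identifies a neighborhood of the identity in $\ker \pi$ with a neighborhood of zero in the additive vector group $\bigl(E \otimes \Lambda_n \lambda_{n+1}\bigr)_\eev$. A scaling-and-power argument---for any $z$ in this vector group and $k$ large enough that all intermediate partial sums $jz/k$ ($1\le j\le k$) lie in the chart, one has $f_{\Lambda_{n+1}}(z/k)^k = f_{\Lambda_{n+1}}(z)$ by Lemma \ref{bulletisplus}---promotes this local picture to a global smooth group isomorphism $N \cong \bigl(E \otimes \Lambda_n \lambda_{n+1}\bigr)_\eev$, whose exponential map is the identity. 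Together with $\sigma$ as a section, this realizes the identity component of $\Gsu_{\Lambda_{n+1}}$ as a semidirect product $N \rtimes \Gsu_{\Lambda_n}$, giving the Lie algebra decomposition $\Lie(\Gsu_{\Lambda_{n+1}}) = (E \otimes \Lambda_n)_\eev \oplus \bigl(E \otimes \Lambda_n \lambda_{n+1}\bigr)_\eev$.

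For $y \in (E \otimes \Lambda_n)_\eev$ and $z \in \bigl(E \otimes \Lambda_n \lambda_{n+1}\bigr)_\eev$, I then define
\[
\exp_{n+1}(y + z) \;:=\; \biggl(\int_0^1 \Ad\bigl(\sigma(\exp_n(sy))\bigr)(z)\, ds\biggr) \cdot \sigma(\exp_n(y)),
\]
where the integral takes values in the Mackey complete space $N$ and the product is taken in $\Gsu_{\Lambda_{n+1}}$. A direct calculation with the semidirect-product law---or, equivalently, solving the ODE of a one-parameter subgroup $\gamma(t) = (a(t), b(t))$ with $b(t) = \exp_n(ty)$ and $\dot a(t) = \Ad(\sigma(b(t)))(z)$---confirms that $t \mapsto \exp_{n+1}(t(y + z))$ is a one-parameter subgroup whose tangent vector at $t = 0$ equals $y + z$.

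The main technical obstacle is to upgrade this construction to \emph{joint} smoothness of $\exp_{n+1}$ in $(y, z)$. The map $\Ad: \Gsu_{\Lambda_n} \times N \to N$ coincides with conjugation inside the smooth Lie group $\Gsu_{\Lambda_{n+1}}$ (since $N$ is a vector group, its group-theoretic conjugation agrees with the Lie-algebraic adjoint action under the identification $N\cong\Lie(N)$), and is therefore smooth. Together with the inductive smoothness of $\exp_n$, this makes the integrand smooth in $(s, y, z)$. Joint smoothness of the parametrized integral then follows from the standard differentiation-under-the-integral result in Mackey complete locally convex spaces; its hypotheses are automatic here because the parameter domain $[0, 1]$ is compact, so all required uniform estimates on the integrand and its derivatives follow from the continuity established above.
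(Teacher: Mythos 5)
Your proof is correct, but it takes a genuinely different route from the paper's. The paper notes that, by Lemma \ref{bulletisplus} and Mackey completeness of $E$, the kernel of $\Gsu_{\eps_{n+1,n}}$ is a vector group and hence regular, and then invokes a general extension principle (stated without proof, by analogy with Kriegl--Michor): in a smooth extension $1\to A\to B\to C\to 1$ with $A$ regular and $C$ admitting a smooth exponential map, $B$ admits one as well. You instead exploit a piece of structure the paper's proof never uses, namely the canonical splitting $\sigma:=\Gsu_{\iota_{n,n+1}}$ of $\Gsu_{\eps_{n+1,n}}$: this turns the extension into a semidirect product of the vector group $\ker(\Gsu_{\eps_{n+1,n}})\cong(E\otimes\Lambda_n\lambda_{n+1})_\eev$ with $\Gsu_{\Lambda_n}$, and for a split extension with abelian vector kernel one can simply write the exponential down, $\exp_{n+1}(y+z)=\bigl(\int_0^1\Ad(\sigma(\exp_n(sy)))(z)\,ds\bigr)\,\sigma(\exp_n(y))$, and verify the one-parameter-group property directly (uniqueness of one-parameter groups with prescribed derivative holds in locally convex Lie groups, so this does define the exponential map). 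What your route buys is explicitness and self-containedness: the only external input is existence and smooth parameter-dependence of weak integrals of smooth curves in Mackey complete spaces, which is in essence the same fact underlying "regularity of a vector group"; what the paper's route buys is brevity and generality, since the extension theorem it cites needs no splitting. Three cosmetic points, none fatal: the decomposition you obtain is $\Gsu_{\Lambda_{n+1}}\cong\ker(\Gsu_{\eps_{n+1,n}})\rtimes\Gsu_{\Lambda_n}$ for the full group (the identity component would rather be $N\rtimes(\Gsu_{\Lambda_n})^\circ$, since $\Gsu_{\Lambda_n}$ need not be connected), and it is the full-group statement you actually use; the scaling-and-power step is superfluous, because Lemma \ref{-1m} already places the entire kernel inside the chart $(\Usu,f)$, so Lemma \ref{bulletisplus} identifies it globally, not merely locally, with the additive group $(E\otimes\Lambda_n\lambda_{n+1})_\eev$; and your appeal to differentiation under the integral sign is asserted rather than proved, though this is at the same level of citation as the paper's own appeal to the unproved extension lemma.
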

\begin{proof}
We use induction on $n$. The case $n=0$ follows from Definition \ref{defblgp}.  Next assume $n>0$.
Since $E$ is Mackey complete, Lemma
\ref{bulletisplus} and \cite[Prop. II.5.6]{neebjpn} imply that $\ker(\Gsu_{\eps_{n+1,n}})$ is regular in 
the sense of
\cite[Def. 7.6]{milnor}. See  \cite[Def. II.5.2]{neebjpn} for a more detailed discussion.
If 
$
1\rightarrow A\rightarrow B\rightarrow C\rightarrow 1
$
is a smooth extension of Lie groups such that
$A$ is regular  and $C$ has a smooth exponential map, then $B$ also has a smooth exponential map. We do not give a proof of the latter statement because
the argument is very similar to 
\cite[Thm 38.6]{krieglmichor}. 
See also \cite{glocknerneeb}.
To complete the proof of the proposition we 
use the above statement with
the sequence
$
1\xrightarrow{\hspace{7mm}}
\ker(\Gsu_{\eps_{n+1,n}})
\xrightarrow{\hspace{7mm}}
\Gsu_{\Lambda_{n+1}}
\xrightarrow{\varepsilon_{n+1,n}}
\Gsu_{\Lambda_{n}}
\xrightarrow{\hspace{7mm}}
1
$.
\end{proof}

\subsection{The exponential map of $(\Usu,\fphi)$}
Let $(\Usu,\fphi)$ be the open chart of $\Gsu$ obtained by Lemma \ref{VVVsubU}.
Next we show how to pull back the exponential map of $\Gsu$ to $\Usu$. 
For every $\Lambda\in\Gr$, the map
\[
\EEsu_\Lambda\to\Lie(\mathcal G_\Lambda)
\ ,\ 
v\mapsto \dd_v f_\Lambda(\zro_\Lambda)
\]
is a continuous invertible linear transformation with a continuous inverse. 
For every 
$v\in \EEsu_\Lambda$ we set 
\[
e^v:=f_\Lambda^{-1}(e^{\dd_vf_\Lambda(\zro_\Lambda)})
\] 
whenever
$e^{\dd_vf_\Lambda(\zro_\Lambda)}\in 
f_\Lambda(\Usu_\Lambda)$.
The next lemma implies that the exponential map is well-defined on some $\Wsu\sqsubseteq \EEsu$.

\begin{lemma}
\label{expwecommu}
There exists a $\Wsu\sqsubseteq \EEsu$ which satisfies the following properties. \begin{itemize}
\item[(i)] $\zro_{\Lambda_0}\in\Wsu_{\Lambda_0}$.
\item[(ii)] $e^{\dd_vf_\Lambda(\zro_\Lambda)}\in 
f_\Lambda(\Usu_\Lambda)$
for every $\Lambda\in\Gr$ and every 
$v\in\Wsu_\Lambda$. 

\end{itemize}
\end{lemma}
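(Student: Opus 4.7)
The plan is to reduce the problem to the classical Lie group $\Gsu_{\Lambda_0}$ via the body map $\eps_\Lambda$, using functoriality of the exponential map. By Proposition \ref{apx-opensubfcn}, to produce $\Wsu\sqsubseteq\EEsu$ it suffices to choose an open neighborhood $W$ of $0$ in $E_\eev$ and set $\Wsu_\Lambda:=W+(E\otimes\Lambda^+)_\eev$. Thus the lemma amounts to finding such a $W$ that works simultaneously for all $\Lambda\in\Gr$.

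First I would rewrite the target condition ``$e^{\dd_vf_\Lambda(\zro_\Lambda)}\in f_\Lambda(\Usu_\Lambda)$'' using Lemma~\ref{-1m}, which gives $f_\Lambda(\Usu_\Lambda)=\Gsu_{\eps_\Lambda}^{-1}(f_{\Lambda_0}(\Usu_{\Lambda_0}))$. So the condition is equivalent to
\[
\Gsu_{\eps_\Lambda}\bigl(e^{\dd_vf_\Lambda(\zro_\Lambda)}\bigr)\in f_{\Lambda_0}(\Usu_{\Lambda_0}).
\]
Since $\Gsu_{\eps_\Lambda}:\Gsu_\Lambda\to\Gsu_{\Lambda_0}$ is a Lie group homomorphism and both groups have smooth exponential maps (by Proposition~\ref{glhasexpsmth}), naturality of the exponential yields $\Gsu_{\eps_\Lambda}\circ\exp_{\Gsu_\Lambda}=\exp_{\Gsu_{\Lambda_0}}\circ\Lie(\Gsu_{\eps_\Lambda})$. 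On the other hand, differentiating the identity $f_{\Lambda_0}\circ\Usu_{\eps_\Lambda}=\Gsu_{\eps_\Lambda}\circ f_\Lambda$ at $\zro_\Lambda$ gives
\[
\Lie(\Gsu_{\eps_\Lambda})\bigl(\dd_vf_\Lambda(\zro_\Lambda)\bigr)=\dd_{\EEsu_{\eps_\Lambda}(v)}f_{\Lambda_0}(\zro_{\Lambda_0}).
\]
Writing $v_0:=\EEsu_{\eps_\Lambda}(v)\in E_\eev$, the condition therefore simplifies to $e^{\dd_{v_0}f_{\Lambda_0}(\zro_{\Lambda_0})}\in f_{\Lambda_0}(\Usu_{\Lambda_0})$, which only involves the body $v_0$.

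Now I would construct $W$. Since $\exp_{\Gsu_{\Lambda_0}}$ is smooth and $f_{\Lambda_0}(\Usu_{\Lambda_0})$ is an open neighborhood of $\yek_{\Lambda_0}=\exp_{\Gsu_{\Lambda_0}}(0)$, the set $\exp_{\Gsu_{\Lambda_0}}^{-1}(f_{\Lambda_0}(\Usu_{\Lambda_0}))$ is an open neighborhood of $0$ in $\Lie(\Gsu_{\Lambda_0})$. The map $E_\eev\to\Lie(\Gsu_{\Lambda_0})$, $v_0\mapsto\dd_{v_0}f_{\Lambda_0}(\zro_{\Lambda_0})$ is a topological linear isomorphism, so its preimage $W\subseteq E_\eev$ of that open neighborhood is an open $0$-neighborhood. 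Define $\Wsu_\Lambda:=W+(E\otimes\Lambda^+)_\eev$; then $\Wsu\sqsubseteq\EEsu$ by Proposition~\ref{apx-opensubfcn} and $\zro_{\Lambda_0}\in\Wsu_{\Lambda_0}=W$. For any $v\in\Wsu_\Lambda$ we have $v_0=\EEsu_{\eps_\Lambda}(v)\in W$, hence the reduced condition above is satisfied, giving property~(ii).

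The only nontrivial ingredient is naturality of the exponential map with respect to $\Gsu_{\eps_\Lambda}$; but this is standard in the locally convex setting since both groups possess smooth exponentials and $\Gsu_{\eps_\Lambda}$ is a smooth homomorphism, so uniqueness of one-parameter subgroups forces the intertwining identity. Everything else is bookkeeping with the chart $f$ and the definition \eqref{operbullet} of the pulled-back group structure.
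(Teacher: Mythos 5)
Your proof is correct and follows essentially the same route as the paper's: choose $W$ from the smooth exponential of $\Gsu_{\Lambda_0}$, define $\Wsu_\Lambda$ as the preimage of the body part (your $W+(E\otimes\Lambda^+)_\eev$ coincides with the paper's $(\EEsu_{\eps_\Lambda})^{-1}(\Wsu_{\Lambda_0})$), reduce condition (ii) to the body via Lemma~\ref{-1m}, and conclude by naturality of the exponential together with the differentiated relation $f_{\Lambda_0}\circ\Usu_{\eps_\Lambda}=\Gsu_{\eps_\Lambda}\circ f_\Lambda$. The only cosmetic quibble is that Proposition~\ref{apx-opensubfcn} is stated in the other direction, but the converse you use is immediate.
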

\begin{proof}
Since 
$\Gsu_{\Lambda_0}$ has a smooth exponential map, there exists an open 0-neighborhood 
$W\subseteq\Lie(\mathcal G_{\Lambda_0})$ 
such that $\{e^w\ :\ w\in W\}\subseteq f_{\Lambda_0}(\Usu_{\Lambda_0})$. Set 
\[
\Wsu_{\Lambda_0}:=
\{\,v\in \EEsu_{\Lambda_0}\ :\ \dd_v f_{\Lambda_0}(\zro_{\Lambda_0})\in W\,\}
\]
and
$
\Wsu_\Lambda:=
\left(\EEsu_{\varepsilon_\Lambda}\right)^{-1}
(\Wsu_{\Lambda_0})
:=\{\,v\in\EEsu_\Lambda\ :\ 
\EEsu_{\varepsilon_\Lambda}(v)\in\Wsu_{\Lambda_0}\,
\}
$
for every $\Lambda\in\Gr$.

Next we prove that if $v\in\Wsu_\Lambda$ then
$e^{\dd_v f_\Lambda(\zro_\Lambda)}\in f_\Lambda(\Usu_\Lambda)$.
By Lemma \ref{-1m} it suffices 
to prove that if $v\in\Wsu_\Lambda$ then
$\Gsu_{\varepsilon_\Lambda}
\big(
e^{\dd_v f_\Lambda(\zro_\Lambda)}
\big)
\in f_{\Lambda_0}(\Usu_{\Lambda_0})$.
To prove the 
latter statement note that
if $v\in\Wsu_\Lambda$ then 
$\EEsu_{\varepsilon_\Lambda}(v)\in\Wsu_{\Lambda_0}$ and therefore
\begin{align*}
\Gsu_{\varepsilon_\Lambda}\big(
e^{\dd_v f_\Lambda(\zro_\Lambda)}
\big)&=
e^{
\dd\Gsu_{\varepsilon_\Lambda}(\yek_\Lambda)
(\dd_v f_\Lambda(\zro_\Lambda))
}=
e^{
\dd f_{\Lambda_0}(\zro_{\Lambda_0})
\left(\EEsu_{\varepsilon_\Lambda}(v)\right)
}\in
f_{\Lambda_0}(\Usu_{\Lambda_0}).
\qedhere
\end{align*}
\end{proof}
Lemma \ref{expwecommu} implies that for every $\Lambda\in\Gr$ the following diagram commutes:
\[
\xymatrix{
\Usu_\Lambda \ar[rr]^{f_\Lambda}  & &  \Gsu_\Lambda\\
\Wsu_\Lambda \ar[u]^{v\mapsto e^v}
\ar[rr]_{v\mapsto \dd_v f_\Lambda(\zro_\Lambda)\ \ }
&  & \Lie(\Gsu_\Lambda)
\ar[u]_{x\mapsto e^x}
}
\]
Our next goal is to prove that the 
induced exponential map is in
$\Natu(\mathcal W,\mathcal U)$. 
\begin{lemma}
\label{expandvarrho}
Let $\Lambda,\Lambda'\in \Gr$ and $\varrho\in\hom_\Gr(\Lambda,\Lambda')$. Then the following diagram commutes:
\[
\xymatrix{
\Usu_\Lambda \ar[rr]^{\Usu_\varrho}  & &  
\Usu_{\Lambda'}\\
\Wsu_\Lambda  \ar[u]^{v\mapsto e^v}
\ar[rr]_{\Wsu_\varrho}
&  & \Wsu_{\Lambda'}
\ar[u]_{v\mapsto e^v}
}
\]
\end{lemma}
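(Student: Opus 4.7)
The plan is to reduce the identity to the corresponding naturality property on the level of the Lie groups $\Gsu_\Lambda$, where $\Gsu_\varrho$ is a Lie group homomorphism and thus automatically intertwines the exponentials.

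First I would unwind the definition of $e^v$. For $v\in\Wsu_\Lambda$ one has
$e^v=f_\Lambda^{-1}\bigl(e^{\dd_v f_\Lambda(\zro_\Lambda)}\bigr)$, so applying $f_{\Lambda'}\circ\Usu_\varrho$ to $e^v$ and using naturality of $f$ (i.e.\ $f_{\Lambda'}\circ\Usu_\varrho=\Gsu_\varrho\circ f_\Lambda$) gives
\[
f_{\Lambda'}\bigl(\Usu_\varrho(e^v)\bigr)=\Gsu_\varrho\bigl(e^{\dd_v f_\Lambda(\zro_\Lambda)}\bigr).
\]
Next I would invoke the fact that $\Gsu_\varrho:\Gsu_\Lambda\to\Gsu_{\Lambda'}$ is a morphism of Lie groups (it comes from the functor $\Gsu$ restricted to the subcategory of group objects), so it intertwines the smooth exponentials produced by Proposition~\ref{glhasexpsmth}:
\[
\Gsu_\varrho\bigl(e^x\bigr)=e^{\,\dd\Gsu_\varrho(\yek_\Lambda)(x)}\qquad\text{for all }x\in\Lie(\Gsu_\Lambda).
\]

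The remaining task is to identify $\dd\Gsu_\varrho(\yek_\Lambda)\bigl(\dd_v f_\Lambda(\zro_\Lambda)\bigr)$ with $\dd_{\Wsu_\varrho(v)}f_{\Lambda'}(\zro_{\Lambda'})$. This is an application of the chain rule to the naturality square $f_{\Lambda'}\circ\Usu_\varrho=\Gsu_\varrho\circ f_\Lambda$ at the basepoint $\zro_\Lambda$, combined with two observations that should be essentially immediate from the definitions: that $\Usu_\varrho$ is the restriction of the linear map $\EEsu_\varrho$ (so $\Usu_\varrho(\zro_\Lambda)=\zro_{\Lambda'}$ and its differential at $\zro_\Lambda$ equals $\EEsu_\varrho$, whose restriction to $\Wsu_\Lambda$ is $\Wsu_\varrho$), and that $f_\Lambda(\zro_\Lambda)=\yek_\Lambda$. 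Differentiating both sides of the naturality relation and evaluating at $v$ then yields the required identity, so
\[
f_{\Lambda'}\bigl(\Usu_\varrho(e^v)\bigr)=e^{\,\dd_{\Wsu_\varrho(v)}f_{\Lambda'}(\zro_{\Lambda'})}=f_{\Lambda'}\bigl(e^{\Wsu_\varrho(v)}\bigr).
\]
Injectivity of $f_{\Lambda'}$ (it is a diffeomorphism onto its image by the chart axiom $\Usu\sqsubseteq_f\Gsu$) then gives $\Usu_\varrho(e^v)=e^{\Wsu_\varrho(v)}$, which is exactly the commutativity of the diagram.

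I do not expect any real obstacle here: the only subtle point is verifying that the target of $\Wsu_\varrho(v)$ really lands in $\Wsu_{\Lambda'}$ so that $e^{\Wsu_\varrho(v)}$ is defined in the first place. This follows from the definition $\Wsu_\Lambda=(\EEsu_{\eps_\Lambda})^{-1}(\Wsu_{\Lambda_0})$ given in the proof of Lemma~\ref{expwecommu} together with $\eps_{\Lambda'}\circ\varrho=\eps_\Lambda$, which forces $\EEsu_{\eps_{\Lambda'}}(\Wsu_\varrho(v))=\EEsu_{\eps_\Lambda}(v)\in\Wsu_{\Lambda_0}$. Everything else is bookkeeping with naturality and the chain rule.
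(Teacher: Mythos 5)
Your argument is correct and is essentially the paper's own proof: both unwind the definition $e^v=f_\Lambda^{-1}(e^{\dd_v f_\Lambda(\zro_\Lambda)})$, differentiate the naturality relation $f_{\Lambda'}\circ\Usu_\varrho=\Gsu_\varrho\circ f_\Lambda$ at $\zro_\Lambda$ to get $\dd f_{\Lambda'}(\zro_{\Lambda'})(\EEsu_\varrho(v))=\dd\Gsu_\varrho(\yek_\Lambda)(\dd_v f_\Lambda(\zro_\Lambda))$, use that the group morphism $\Gsu_\varrho$ intertwines the exponentials of $\Gsu_\Lambda$ and $\Gsu_{\Lambda'}$, and conclude by injectivity of $f_{\Lambda'}$; you merely run the chain of equalities in the opposite direction. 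Your added check that $\Wsu_\varrho(v)\in\Wsu_{\Lambda'}$ (via $\eps_{\Lambda'}\circ\varrho=\eps_\Lambda$) is a harmless explicit verification of what the paper leaves implicit in $\Wsu\sqsubseteq\EEsu$.
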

\begin{proof}
Let $w\in\Wsu_\Lambda$. Then
\begin{align*}
f_{\Lambda'}\big(e^{\Wsu_\varrho(w)}\big)
&=
f_{\Lambda'}\big(e^{\EEsu_\varrho(w)}\big)
=
e^{\dd f_{\Lambda'}(\zro_{\Lambda'})(\EEsu_\varrho(w))}
=
e^{\dd \Gsu_\varrho(\yek_\Lambda)\left(
\dd_{w} f_\Lambda(\zro_\Lambda)
\right)}\\
&=
\Gsu_\varrho
\big(
e^{\dd_{w} f_\Lambda(\zro_\Lambda)}
\big)=\Gsu_\varrho
\big(
f_\Lambda(e^{w})
\big)
=f_{\Lambda'}(\Usu_\varrho(e^{w})).
\end{align*}
Since $f_{\Lambda'}:\Usu_{\Lambda'}\to\Gsu_{\Lambda'}$ is an injection, it follows that
$
e^{\Wsu_\varrho(w)}=
\Usu_\varrho(e^{w})
$.
\end{proof}

Fix $\Lambda\in\Gr$. The map 
\begin{equation}
\label{defsla}
S_\Lambda:\EEsu_\Lambda\to\Lie(\Gsu_\Lambda)
\ ,\ v\mapsto \dd_vf_\Lambda(\zro_\Lambda)
\end{equation}
is a continuous invertible linear transformation with a continuous inverse. Therefore
the Lie bracket of $\Lie(\Gsu_\Lambda)$ can be pulled back to $\EEsu_\Lambda$, that is, we can define
a Lie bracket 
\[
\EEsu_\Lambda\times\EEsu_\Lambda\to\EEsu_\Lambda\ ,\ 
[x,y]_\Lambda:=
S_\Lambda^{-1}
\left(
[S_\Lambda (x),
S_\Lambda (y)]_{\Lie(\Gsu_\Lambda)}
\right)
\text{ \ for every }x,y\in\EEsu_\Lambda,
\]
where $[\,\cdot,\cdot\,]_{\Lie(\Gsu_\Lambda)}$ denotes
the Lie bracket of $\Lie(\Gsu_\Lambda)$.
Observe that $e^{sx},e^{ty}\in\Wsu_\Lambda$ for sufficiently small $s,t\in\R$, and therefore
\begin{equation}
\label{xyldsdt}
[x,y]_\Lambda=\frac{\partial}{\partial s}
\frac{\partial}{\partial t}
\big(e^{sx}\bullet e^{ty}\bullet e^{-sx}\big)
\res{s=t=0}
\text{\ \ for every }x,y\in\EEsu_\Lambda.
\end{equation}
\begin{lemma}
\label{brllinrho}
Let $\Lambda\in\Gr$ and $x,y\in\EEsu_\Lambda$. 
\begin{itemize}
\item[(i)] If $\lambda_I,\lambda_J\in\Lambda_\eev$ then
$
[x\cdot\lambda_I,y\cdot\lambda_J]_\Lambda=
[x,y]_\Lambda\cdot\lambda_I\lambda_J.
$
\item[(ii)] If $\Lambda'\in\Gr$ and $\varrho\in\hom_\Gr(\Lambda,\Lambda')$ then 
\[
[\EEsu_\varrho(x),
\EEsu_\varrho(y)]_{\Lambda'}
=
\EEsu_\varrho\big(
[x,y]_\Lambda
\big)
.
\]
\end{itemize} 
\end{lemma}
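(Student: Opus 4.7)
The two parts need rather different ingredients. Part (ii) is a direct naturality argument based on Lemma \ref{expandvarrho}, while part (i) reduces the claim to $\Lambda_\eev$--multilinearity of higher differentials of smooth morphisms, i.e.\ to Lemma \ref{huefluehl}(ii).

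For (ii), note that since $\Usu\sqsubseteq\EEsu$, the map $\Usu_\varrho$ is just the restriction to $\Usu_\Lambda$ of the continuous $\R$--linear map $\EEsu_\varrho$, and the naturality of $\mu$ combined with \eqref{operbullet} gives $\Usu_\varrho(w_1\bullet w_2)=\Usu_\varrho(w_1)\bullet \Usu_\varrho(w_2)$ wherever both sides are defined. Combining this with Lemma \ref{expandvarrho} applied to $sx$, $ty$, $-sx$ for small $s,t\in\R$ yields
\[
e^{s\EEsu_\varrho(x)}\bullet e^{t\EEsu_\varrho(y)}\bullet e^{-s\EEsu_\varrho(x)}=\EEsu_\varrho\big(e^{sx}\bullet e^{ty}\bullet e^{-sx}\big).
\]
Since $\EEsu_\varrho$ is continuous and $\R$--linear it commutes with the mixed partial $\partial_s\partial_t\res{s=t=0}$, and invoking \eqref{xyldsdt} gives $[\EEsu_\varrho(x),\EEsu_\varrho(y)]_{\Lambda'}=\EEsu_\varrho([x,y]_\Lambda)$.

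For (i), I would introduce the triple product
\[
N_\Lambda(w_1,w_2,w_3):=w_1\bullet w_2\bullet w_3
\]
on a neighbourhood of $(\zro_\Lambda,\zro_\Lambda,\zro_\Lambda)$ in $\EEsu_\Lambda^3$, and observe that $N$ assembles into a smooth morphism of superdomains, since it is the pullback of $\mu\circ(\mu\times\mathrm{id})$ through the chart $f$. By \eqref{xyldsdt},
\[
[x,y]_\Lambda=\partial_s\partial_t\res{s=t=0}N_\Lambda(e^{sx},e^{ty},e^{-sx}).
\]
Applying the chain rule together with $\partial_s e^{sv}\res{s=0}=v$ (a direct consequence of the defining diagram in Lemma \ref{expwecommu} and the identity $\partial_s\exp(sX)\res{s=0}=X$ in $\Lie(\Gsu_\Lambda)$) rewrites this as
\[
[x,y]_\Lambda=\dd^2 N_\Lambda(\zro_\Lambda,\zro_\Lambda,\zro_\Lambda)\big((x,0,0),(0,y,0)\big)-\dd^2 N_\Lambda(\zro_\Lambda,\zro_\Lambda,\zro_\Lambda)\big((0,0,x),(0,y,0)\big).
\]
By Lemma \ref{huefluehl}(ii) the bilinear form $\dd^2 N_\Lambda(\zro_\Lambda,\zro_\Lambda,\zro_\Lambda)$ is $\Lambda_\eev$--bilinear in its two tangent-vector arguments, so each of the two terms, and hence their difference, is $\Lambda_\eev$--bilinear in $(x,y)$. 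Substituting $x\lambda_I$ for $x$ and $y\lambda_J$ for $y$ then yields $[x\lambda_I,y\lambda_J]_\Lambda=\lambda_I\lambda_J[x,y]_\Lambda$.

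The main technical point is the legitimacy of invoking Lemma \ref{huefluehl}(ii) for $N$: the lemma is stated for morphisms $\Usu\to\EFsu$ into a ``flat'' model target, whereas $N$ has a triple product domain and takes values in $\Usu\sqsubseteq\EEsu$. Both issues are cosmetic: a product of superdomains is a superdomain modelled on the direct sum of the model spaces, and the inclusion $\Usu_\Lambda\subseteq\EEsu_\Lambda$ is open, so iterated differentials can be computed in the ambient $\EEsu_\Lambda$ without modification. A one-line remark should justify this.
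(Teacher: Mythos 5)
Your proof is correct, and it overlaps heavily with the paper's argument while differing in the details of both parts. For (i) you are doing essentially what the paper does: the paper sets $h_\Lambda(a,b):=a\bullet b\bullet a^{-1}$ on $\Usu\times\Usu$, composes with $g_\Lambda(s,t)=(e^{sx},e^{ty})$, and uses Lemma \ref{faadibruno} to get the single-term identity $[x,y]_\Lambda=\dd^2 h_\Lambda\big((\zro_\Lambda,\zro_\Lambda)\big)\big((x,\zro_\Lambda),(\zro_\Lambda,y)\big)$; your three-slot product $N(w_1,w_2,w_3)=w_1\bullet w_2\bullet w_3$ avoids building the inversion into the morphism at the cost of a two-term expression, and then Lemma \ref{huefluehl}(ii) finishes exactly as in the paper. (Your worry about the hypotheses of Lemma \ref{huefluehl} --- product domain, target $\Vsu$ rather than $\EFsu$ --- applies verbatim to the paper's own use of that lemma for $h:\Usu\times\Usu\to\Vsu$, and your remark that it is harmless is the right one.) For (ii) you genuinely deviate: the paper extracts (ii) from the same second-differential formula via Lemma \ref{huefluehl}(i), i.e.\ naturality of the iterated differentials under $\varrho$, whereas you push the whole conjugation curve through $\Usu_\varrho$ using Lemma \ref{expandvarrho} and naturality of $\mu$, and then commute the continuous linear map $\EEsu_\varrho$ past $\partial_s\partial_t$ and invoke \eqref{xyldsdt}. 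Both are valid; the paper's choice buys (i) and (ii) simultaneously from one formula, while your route for (ii) is more elementary in that it needs neither Faà di Bruno nor Lemma \ref{huefluehl}(i), only naturality of the exponential and of the multiplication together with linearity and continuity of $\EEsu_\varrho$.
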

\begin{proof}
Note that
\[
[x,y]_\Lambda=
\frac{\partial }{\partial s}
\frac{\partial }{\partial t}
h_\Lambda\circ g_\Lambda(s,t)\res{s=t=0}
\] where 
$h:\mathcal U\times 
\mathcal U\to \mathcal V
$ 
is the smooth morphism defined by 
\[
h_\Lambda(a,b):=a\bullet b\bullet a^{-1}\text{ for every }
a,b,c\in\Usu_\Lambda
\]
and 
\[
g_\Lambda:(-r,r)\times (-r,r)\to
\Usu_\Lambda\times\Usu_\Lambda\ ,\  
g_\Lambda(s,t):=(e^{sx},e^{ty})
\]
for $r>0$ sufficiently small.
Lemma \ref{faadibruno} implies that
\[
[x,y]_\Lambda=\dd^2 h_\Lambda
\big((\zro_\Lambda,\zro_\Lambda)\big)
\big((x,\zro_\Lambda),(\zro_\Lambda,y)\big).
\]
Therefore (i) and (ii) follow 
from Lemma \ref{huefluehl}.
\end{proof}

\subsection{A Lie superalgebra structure on $E$} Our next goal is to obtain a Lie superalgebra structure on $E=E_\eev\oplus E_\ood$ 
which is compatible with the Lie brackets $[\cdot,\cdot]_\Lambda$ on $\EEsu_\Lambda$ for every 
$\Lambda\in\Gr$.
\begin{lemma} 
\label{existenceofc1c2}
There exists a continuous bilinear map 
$
\bcye:E_\eev\times E_\ood\to E_\ood
$ 
such that 
\[
[x,y\cdot\lambda_1]_{\Lambda_1}=\bcye(x,y)\cdot\lambda_1
\text{ for every $x\in E_\eev$ and every 
$y\in E_\ood$.}
\]
There exists a continuous symmetric bilinear form 
$
\bcdo:E_\ood\times E_\ood\to E_\eev
$ 
such that
\[
[x\cdot\lambda_1,y\cdot\lambda_2]_{\Lambda_2}
=\bcdo(x,y)\cdot\lambda_1\lambda_2
\text{ for every }x,y\in E_\ood.
\]
\end{lemma}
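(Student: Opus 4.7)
The plan is to extract both $\bcye$ and $\bcdo$ by decomposing the relevant brackets $[\cdot,\cdot]_\Lambda$ according to the internal grading of $\EEsu_\Lambda$ and using the naturality statement in Lemma \ref{brllinrho}(ii) to eliminate the unwanted components.

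For $\bcye$: given $x\in E_\eev$ and $y\in E_\ood$, regard $x=x\cdot 1_{\Lambda_1}$ and $y\cdot\lambda_1$ as elements of $\EEsu_{\Lambda_1}=E_\eev\oplus E_\ood\cdot\lambda_1$, and write
\[
[x,y\cdot\lambda_1]_{\Lambda_1}=a_0+a_1\cdot\lambda_1,\qquad a_0\in E_\eev,\ a_1\in E_\ood.
\]
Applying $\EEsu_{\varepsilon_{1,0}}$ and using Lemma \ref{brllinrho}(ii) yields $[x,0]_{\Lambda_0}=a_0$, so $a_0=0$, and one sets $\bcye(x,y):=a_1$. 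For $\bcdo$: with $x,y\in E_\ood$ the bracket $[x\cdot\lambda_1,y\cdot\lambda_2]_{\Lambda_2}$ lies in
\[
\EEsu_{\Lambda_2}=E_\eev\oplus E_\ood\cdot\lambda_1\oplus E_\ood\cdot\lambda_2\oplus E_\eev\cdot\lambda_1\lambda_2,
\]
so write it as $a_0+a_1\cdot\lambda_1+a_2\cdot\lambda_2+a_{12}\cdot\lambda_1\lambda_2$. Apply the morphism $\varrho_1=\varepsilon_{2,1}:\Lambda_2\to\Lambda_1$ (which kills $\lambda_2$) to kill $a_0$ and $a_1$, and apply the morphism $\varrho_2\in\hom_\Gr(\Lambda_2,\Lambda_1)$ given by $\lambda_1\mapsto 0$, $\lambda_2\mapsto\lambda_1$ to kill $a_2$. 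What survives is $a_{12}\cdot\lambda_1\lambda_2$, and one defines $\bcdo(x,y):=a_{12}$.

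Symmetry of $\bcdo$ will follow from antisymmetry of the Lie bracket combined with the swap automorphism $\sigma\in\hom_\Gr(\Lambda_2,\Lambda_2)$ defined by $\sigma(\lambda_1)=\lambda_2$ and $\sigma(\lambda_2)=\lambda_1$, so that $\sigma(\lambda_1\lambda_2)=-\lambda_1\lambda_2$. Naturality gives
\[
\EEsu_\sigma\bigl([x\cdot\lambda_1,y\cdot\lambda_2]_{\Lambda_2}\bigr)=[x\cdot\lambda_2,y\cdot\lambda_1]_{\Lambda_2}=-[y\cdot\lambda_1,x\cdot\lambda_2]_{\Lambda_2}=-\bcdo(y,x)\cdot\lambda_1\lambda_2,
\]
while the left-hand side equals $\bcdo(x,y)\cdot\sigma(\lambda_1\lambda_2)=-\bcdo(x,y)\cdot\lambda_1\lambda_2$. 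Comparing yields $\bcdo(x,y)=\bcdo(y,x)$.

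Bilinearity of $\bcye$ and $\bcdo$ is immediate from bilinearity of the Lie bracket $[\cdot,\cdot]_\Lambda$ on $\EEsu_\Lambda$ together with linearity of the maps $y\mapsto y\cdot\lambda_1$, $x\mapsto x\cdot\lambda_1$ and of the projection onto the $\lambda_1$-component (resp.\ $\lambda_1\lambda_2$-component). Continuity follows similarly, since the Lie bracket on the Fr\'echet--Lie algebra $\Lie(\Gsu_{\Lambda_i})$ is continuous and $S_\Lambda$ in \eqref{defsla} is a topological isomorphism. I do not expect a serious obstacle here: the content is entirely in the fact that Lemma \ref{brllinrho}(ii) together with elementary degree-counting in $\Lambda_1$ and $\Lambda_2$ forces the brackets to have the claimed leading form. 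The only place that requires care is bookkeeping of signs induced by $\sigma$, which accounts for the symmetry (rather than antisymmetry) of $\bcdo$.
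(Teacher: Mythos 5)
Your proposal is correct and follows essentially the same route as the paper: both extract $\bcye$ and $\bcdo$ by applying the naturality statement of Lemma \ref{brllinrho}(ii) with Grassmann-algebra morphisms that kill generators (thereby forcing the unwanted graded components of the bracket to vanish), and both deduce the symmetry of $\bcdo$ from the swap $\lambda_1\leftrightarrow\lambda_2$ combined with antisymmetry of $[\,\cdot,\cdot\,]_{\Lambda_2}$. The only cosmetic differences are that the paper uses endomorphisms of $\Lambda_1$ and $\Lambda_2$ (namely $\iota_{\Lambda_1}\circ\eps_{\Lambda_1}$ and $\varrho_i(\lambda_j)=\delta_{i,j}\lambda_j$) instead of your morphisms onto smaller Grassmann algebras, and that in this section $E$ is only assumed to be a Mackey complete locally convex space rather than Fr\'echet, which does not affect your continuity argument.
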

\begin{proof}
This is a consequence of Lemma \ref{brllinrho}(ii). For $\bcye$ we use $\varrho:=\iota_{\Lambda_1}\circ\eps_{\Lambda_1}
\in\hom_\Gr(\Lambda_1,\Lambda_1)$. 
For $\bcdo$, 
we use $\varrho:=\varrho_i\in\hom_\Gr(\Lambda_2,\Lambda_2)$, where $i\in\{1,2\}$, defined by
$
\varrho_i(\lambda_j):=
\delta_{i,j}\lambda_j
$.
To show that $\bcdo$ is symmetric we use
$\varrho\in\hom_\Gr(\Lambda_2,\Lambda_2)$ given by 
$\varrho(\lambda_1):=\lambda_2$ and $\varrho(\lambda_2):=\lambda_1$.
\end{proof}
Consider the continuous skew-symmetric bilinear map
\[
\bcsfr: E_\eev\times E_\eev\to E_\eev\ ,\ \bcsfr(x,y):=[x,y]_{\Lambda_0}.
\]
Using $\bcsfr,\bcye,$ and $\bcdo$ we define a Lie superalgebra structure on $E=E_\eev\oplus E_\ood$. For every
$x_\eev,y_\eev\in E_\eev$ and every 
$x_\ood,y_\ood\in E_\ood$ we set 
\begin{equation}
\label{sbrc1c2}
[x_\eev+x_\ood,y_\eev+y_\ood]:=
\bcsfr(x_\eev,y_\eev)
+\bcye(x_\eev,y_\ood)
-\bcye(y_\eev,x_\ood)
-\bcdo(x_\ood,y_\ood).
\end{equation}
Bilinearity and continuity of $[\cdot,\cdot]$ are obvious. Example \ref{exsjac} below explains 
how to prove the super Jacobi identity for $[\cdot,\cdot]$ using the Jacobi identity of $[\cdot,\cdot]_\Lambda$. 

\begin{example}
\label{exsjac}
Assume $x,y\in E_\ood$ and $z\in E_\eev$. In this case we should prove that
\begin{equation}
\label{egxyz}
[x,[y,z]]-[y,[z,x]]+[z,[x,y]]=0.
\end{equation}
Using \eqref{sbrc1c2} we can write 
\eqref{egxyz} as 
\begin{equation}
\label{egxyz1}
-\bcdo(x,\bcye(z,y))-\bcdo(y,\bcye(z,x))
+\bcsfr(z,\bcdo(x,y))=0.
\end{equation}
From Lemma \ref{existenceofc1c2} and Lemma \ref{brllinrho}(ii) it follows that
\[
[y\cdot\lambda_1,z]_{\Lambda_2}=
-[z,y\cdot\lambda_1]_{\Lambda_2}
=-\bcye(z,y)\cdot\lambda_1.
\]
Again from Lemma \ref{existenceofc1c2} and Lemma \ref{brllinrho}(ii) it follows that 
\[
[x\cdot\lambda_2,[y\cdot\lambda_1,z]_{\Lambda_2}]_{\Lambda_2}
=
[x\cdot\lambda_2,-\bcye(z,y)\cdot\lambda_1]_{\Lambda_2}
=\bcdo(x,\bcye(z,y))\cdot\lambda_1\lambda_2.
\]
In a similar way, it can be shown that
$
[y\cdot\lambda_1,[z,x\cdot\lambda_2]_{\Lambda_2}]_{\Lambda_2}=
\bcdo(y,\bcye(z,x))\cdot\lambda_1\lambda_2
$
and
$
[z,[x\cdot\lambda_2,y\cdot\lambda_1]_{\Lambda_2}]_{\Lambda_2}=
-\bcsfr(z,\bcdo(x,y))\cdot\lambda_1\lambda_2
$.
Therefore \eqref{egxyz1} is a consequence of the Jacobi identity of $[\cdot,\cdot]_{\Lambda_2}$
for the elements $x\cdot \lambda_2,y\cdot\lambda_1,$ and $z$. 
\end{example}

For every $\Lambda\in\Gr$,
the Lie superbracket on $E=E_\eev\oplus E_\ood$ defined in \eqref{sbrc1c2} induces a Lie superbracket
\[
[\,\cdot,\cdot\,]'_\Lambda:
(E\otimes\Lambda)\times (E\otimes\Lambda) \to
E\otimes\Lambda
\]
 as follows. For homogeneous $v_1,v_2\in E$  and $\lambda_{I_1},\lambda_{I_2}\in\Lambda$ 
we set
\begin{equation*}
[v_1\cdot\lambda_{I_1},v_2\cdot\lambda_{I_2}]'_\Lambda
:=
(-1)^{|\lambda_{I_1}|\cdot|v_2|}[v_1,v_2]\cdot
\lambda_{I_1}\lambda_{I_2}.
\end{equation*}

\begin{proposition}
\label{prp-brkstsame}
Let $\Lambda\in\Gr$. Then
$
[x,y]_\Lambda=[x,y]'_\Lambda$ for every 
$
x,y\in\EEsu_\Lambda
$.
\end{proposition}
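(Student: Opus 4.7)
The plan is to verify the equality pointwise on simple tensors and then invoke bilinearity. Since both brackets $[\,\cdot\,,\,\cdot\,]_\Lambda$ and $[\,\cdot\,,\,\cdot\,]'_\Lambda$ are $\R$-bilinear on $\EEsu_\Lambda$, it suffices to compare them on pairs $v_1\cdot\lambda_{I_1}$, $v_2\cdot\lambda_{I_2}$, where $v_i\in E$ and $\lambda_{I_i}\in\Lambda$ are homogeneous with $|v_i|=|\lambda_{I_i}|$ (so that the vectors lie in $\EEsu_\Lambda$). This leaves four parity combinations to check.

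In each case the strategy is the same: use Lemma~\ref{brllinrho}(i) to pull any \emph{even} Grassmann factor out of $[\,\cdot\,,\,\cdot\,]_\Lambda$, then apply Lemma~\ref{brllinrho}(ii) with an explicit morphism $\varrho\in\hom_\Gr(\Lambda_k,\Lambda)$ to transport the remaining bracket back to the ``minimal'' Grassmann algebra $\Lambda_k$ where it was defined in Lemma~\ref{existenceofc1c2}. Concretely, for the even-even case take $\varrho$ induced from $\iota_{\Lambda_0,\Lambda}$ to recover $\bcsfr(v_1,v_2)=[v_1,v_2]_{\Lambda_0}$; for the mixed cases use $\varrho\colon\Lambda_1\to\Lambda$, $\lambda_1\mapsto\lambda_{I_j}$ (where $\lambda_{I_j}$ is the odd factor), which yields $[v,w\lambda_{I_j}]_\Lambda=\bcye(v,w)\lambda_{I_j}$ by Lemma~\ref{existenceofc1c2}; for the odd-odd case take $\varrho\colon\Lambda_2\to\Lambda$, $\lambda_i\mapsto\lambda_{I_i}$, giving $[v_1\lambda_{I_1},v_2\lambda_{I_2}]_\Lambda=\bcdo(v_1,v_2)\lambda_{I_1}\lambda_{I_2}$.

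On the other side, $[\,\cdot\,,\,\cdot\,]'_\Lambda$ is computed directly from the definition via \eqref{sbrc1c2} and the sign rule $(-1)^{|\lambda_{I_1}||v_2|}$. A quick bookkeeping shows the four cases line up:
\begin{itemize}
\item even-even gives $\bcsfr(v_1,v_2)\lambda_{I_1}\lambda_{I_2}$ on both sides;
\item even-odd gives $\bcye(v_1,v_2)\lambda_{I_1}\lambda_{I_2}$ with sign $+1$;
\item odd-even gives, via the sign $+1$ and the antisymmetric expansion $[v_1,v_2]=-\bcye(v_2,v_1)$ from \eqref{sbrc1c2}, the value $-\bcye(v_2,v_1)\lambda_{I_1}\lambda_{I_2}$, matching what Lemma~\ref{brllinrho}(i)\&(ii) produces;
\item odd-odd gives sign $-1$ combined with $[v_1,v_2]=-\bcdo(v_1,v_2)$, yielding $\bcdo(v_1,v_2)\lambda_{I_1}\lambda_{I_2}$.
\end{itemize}

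The only delicate point is the sign bookkeeping in the odd-odd and odd-even cases, where the Koszul sign $(-1)^{|\lambda_{I_1}||v_2|}$ in the definition of $[\,\cdot\,,\,\cdot\,]'_\Lambda$ must cancel exactly against the signs hidden in \eqref{sbrc1c2} (namely the minus signs in front of $\bcye(y_\eev,x_\ood)$ and $\bcdo(x_\ood,y_\ood)$) and against the antisymmetry of $[\,\cdot\,,\,\cdot\,]_\Lambda$ on $\EEsu_\Lambda$. Once this matching is carried out in all four parity cases, the equality $[x,y]_\Lambda=[x,y]'_\Lambda$ follows on all generators, and hence on all of $\EEsu_\Lambda$ by bilinearity.
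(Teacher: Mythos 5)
Your proposal is correct and follows exactly the route the paper takes: its proof of Proposition \ref{prp-brkstsame} is precisely ``follows from Lemma \ref{brllinrho} and Lemma \ref{existenceofc1c2}'', and your case-by-case argument (pulling out even Grassmann factors via Lemma \ref{brllinrho}(i), transporting to $\Lambda_0$, $\Lambda_1$, $\Lambda_2$ via morphisms $\varrho$ and Lemma \ref{brllinrho}(ii), then matching signs against \eqref{sbrc1c2} and the antisymmetry of $[\,\cdot,\cdot\,]_\Lambda$) is just the detailed version of that citation. The sign bookkeeping in all four parity cases checks out.
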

\begin{proof}
Follows from Lemma 
\ref{brllinrho} and 
Lemma \ref{existenceofc1c2}.
\end{proof}

\subsection{The Harish--Chandra pair associated to $\Gsu$}

For every $\Lambda\in\Gr$, we define an action of $\Gsu_{\Lambda_0}$ 
on $\EEsu_\Lambda$ as follows:
\begin{equation*}
\Gsu_{\Lambda_0}\times\EEsu_\Lambda\to\EEsu_\Lambda\ ,\
(g,v)\mapsto g\star_\Lambda v:=S_\Lambda^{-1}
\big(
\Ad_\Lambda(\Gsu_{\iota_\Lambda}(g))
(S_\Lambda (v))
\big).
\end{equation*}
where $S_\Lambda$ is the linear transformation defined in \eqref{defsla} and $\Ad_\Lambda(a)(x)$ denotes the adjoint action of $a\in\Gsu_\Lambda$ on $x\in\Lie(\Gsu_\Lambda)$. Note that
by the Chain Rule,
\begin{equation}
\label{addifrforms}
g\star_\Lambda v=
S_\Lambda^{-1}
\big(
\dd(l_g\circ r_{g^{-1}})_\Lambda(\yek_\Lambda)
(S_\Lambda(v))
\big)
=\dd(f^{-1}\circ l_g\circ r_{g^{-1}}\circ f)_\Lambda
(\zro_\Lambda)(v).
\end{equation}

\begin{lemma}
\label{lemprpstarl}
Let $g\in\Gsu_{\Lambda_0}$, $\Lambda\in \Gr$, and 
$v\in\EEsu_\Lambda$. 
\begin{itemize}
\item[(i)]
If $\varrho\in\hom_\Gr(\Lambda,\Lambda')$ for some $\Lambda'\in\Gr$ 
then 
$
g\star_{\Lambda'}\EEsu_\varrho(v)
=
\EEsu_\varrho(g\star_\Lambda v)
$.

\item[(ii)] 
$
g\star_\Lambda(v\cdot \lambda_I)
=
(g\star_\Lambda v)\cdot \lambda_I
$
for every 
$\lambda_I\in\Lambda_\eev$.

\end{itemize}

\end{lemma}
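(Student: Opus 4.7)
The plan is to reinterpret the action $g\star_\Lambda$ as the differential at $\zro_\Lambda$ of a single smooth morphism of supermanifolds, and then let the functorial properties of differentials recorded in Lemma \ref{huefluehl} do all the work. Fix $g\in\Gsu_{\Lambda_0}$. By Remark \ref{ntrlinj} the translations $l_g$ and $r_{g^{-1}}$ belong to $\hom_\Sman(\Gsu,\Gsu)$, and since $(l_g\circ r_{g^{-1}})_\Lambda(\yek_\Lambda)=\yek_\Lambda$ for every $\Lambda$, Lemma \ref{-1m} allows one to choose a subfunctor $\Usu'\sqsubseteq\Usu$ with $\zro_{\Lambda_0}\in\Usu'_{\Lambda_0}$ such that $l_g\circ r_{g^{-1}}\circ f$ maps $\Usu'$ into $f(\Usu)$ at every level. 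On $\Usu'$ we may then define the smooth morphism
\[
h_g:=f^{-1}\circ l_g\circ r_{g^{-1}}\circ f,
\]
which satisfies $h_{g,\Lambda}(\zro_\Lambda)=\zro_\Lambda$ for every $\Lambda\in\Gr$, and equation \eqref{addifrforms} becomes
\[
g\star_\Lambda v=\dd h_{g,\Lambda}(\zro_\Lambda)(v)\quad\text{for every }v\in\EEsu_\Lambda.
\]

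With this reformulation in hand, part (i) is a direct application of Lemma \ref{huefluehl}(i) with $n=1$: for $\varrho\in\hom_\Gr(\Lambda,\Lambda')$, using $\Usu_\varrho(\zro_\Lambda)=\zro_{\Lambda'}$,
\[
\EEsu_\varrho(g\star_\Lambda v)=\EEsu_\varrho\bigl(\dd h_{g,\Lambda}(\zro_\Lambda)(v)\bigr)=\dd h_{g,\Lambda'}(\zro_{\Lambda'})\bigl(\EEsu_\varrho(v)\bigr)=g\star_{\Lambda'}\EEsu_\varrho(v).
\]
Part (ii) is equally immediate from Lemma \ref{huefluehl}(ii), which asserts that the linear map $v\mapsto \dd h_{g,\Lambda}(\zro_\Lambda)(v)$ is $\Lambda_\eev$-linear. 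Since right multiplication by $\lambda_I\in\Lambda_\eev$ is a $\Lambda_\eev$-module endomorphism of $\EEsu_\Lambda$,
\[
g\star_\Lambda(v\cdot\lambda_I)=\dd h_{g,\Lambda}(\zro_\Lambda)(v\cdot\lambda_I)=\dd h_{g,\Lambda}(\zro_\Lambda)(v)\cdot\lambda_I=(g\star_\Lambda v)\cdot\lambda_I.
\]

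There is no serious obstacle; the entire content of the lemma is that $g\star_\Lambda$ is the derivative at the identity of a natural smooth morphism, after which both statements become instances of Lemma \ref{huefluehl}. The only point requiring a line of verification is that $h_g$ genuinely assembles into a morphism of subfunctors, and this is a routine invocation of Lemma \ref{-1m} since $l_g\circ r_{g^{-1}}$ fixes $\yek_\Lambda$ uniformly in $\Lambda$.
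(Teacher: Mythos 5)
Your proof is correct and is essentially the paper's: part (ii) is exactly the paper's argument (equation \eqref{addifrforms} together with the $\Lambda_\eev$--linearity of $v\mapsto \dd_v(f^{-1}\circ l_g\circ r_{g^{-1}}\circ f)_\Lambda(\zro_\Lambda)$), and for part (i) you merely package the paper's differentiation of the naturality relations for $f$ and for $l_g\circ r_{g^{-1}}$ into a single application of Lemma \ref{huefluehl}(i) to the composite $h_g$, after the routine shrinking of the domain via Lemma \ref{-1m}. This is a legitimate and slightly cleaner organization, but the underlying computation (the chain rule applied to naturality, exactly as in the paper's manipulation with $S_\Lambda$ and $\dd\Gsu_\varrho$) is the same.
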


\begin{proof}
(i) Differentiating 
$
f_{\Lambda'}\circ\Usu_\varrho=
\Gsu_\varrho\circ f_\Lambda
$ 
yields
$
S_{\Lambda'}\circ \EEsu_\varrho(w)=
\dd\Gsu_\varrho(\yek_{\Lambda'})(S_\Lambda(w))
$. 
Similarly, differentiating
$
(l_g\circ r_{g^{-1}})_{\Lambda'}\circ \Gsu_\varrho
=\Gsu_\varrho\circ (l_g\circ  r_{g^{-1}})_\Lambda
$
yields
\[
\dd(l_g\circ r_{g^{-1}})_{\Lambda'}(\yek_{\Lambda'})
\big(
\dd\Gsu_\varrho(\yek_\Lambda)(w)
\big)
=
\dd\Gsu_\varrho(\yek_{\Lambda})
\big(
\dd(l_g\circ  r_{g^{-1}})_\Lambda
(\yek_\Lambda)(w)\big).
\]
Therefore
\begin{align*}
g\star_{\Lambda'}\EEsu_\varrho(v)
&=
S_{\Lambda'}^{-1}\Big(
\dd(l_g\circ r_{g^{-1}})_{\Lambda'}
(\yek_{\Lambda'})\big(
S_{\Lambda'}\circ\EEsu_\varrho(v)
\big)
\Big)\\
&=
S_{\Lambda'}^{-1}\Big(
\dd(l_g\circ r_{g^{-1}})_{\Lambda'}
(\yek_{\Lambda'})
\Big(
\dd\Gsu_\varrho(\yek_\Lambda)
(S_\Lambda(v))
\Big)
\Big)\\
&=
S_{\Lambda'}^{-1}
\Big(
\dd\Gsu_\varrho(\yek_{\Lambda})
\Big(
\dd(l_g\circ r_{g^{-1}})_\Lambda
(\yek_{\Lambda})\big(
S_\Lambda(v)
\big)
\Big)
\Big)\\
&=\EEsu_\varrho \circ S_\Lambda^{-1}
\Big(
\dd(l_g\circ r_{g^{-1}})_\Lambda
(\yek_{\Lambda})
\big(
S_\Lambda(v)
\big)
\Big)
=\EEsu_\varrho(g\star_\Lambda v).
\end{align*}

(ii) Follows
 from \eqref{addifrforms} and $\Lambda_\eev$--linearity
 of $v\mapsto \dd_v 
 (f^{-1}\circ l_g\circ r_{g^{-1}}
 \circ f)_\Lambda(\zro_\Lambda)$.
\end{proof}

\begin{theorem}
\label{liesg-hcpair} 
Let 
$G:=\Gsu_{\Lambda_0}$. Let 
$\g g_\eev:=\Lie(\Gsu_{\Lambda_0})$
and 
\[
A_\eev:E_\eev\to\g g_\eev\ ,\ 
A_\eev(e_\eev):=
\dd_{e_\eev}f_{\Lambda_0}(\zro_{\Lambda_0}).
\]
Let
$\g g_\ood:=
\ker\left(
\dd\Gsu_{\varepsilon_{\Lambda_1}}(\yek_{\Lambda_1})
\right)
\subseteq \Lie(\Gsu_{\Lambda_1})
$
and
\[
A_\ood:E_\ood\to
\g g_\ood
\ ,\ 
A(e_\ood):=
\dd f_{\Lambda_1}(\zro_{\Lambda_1})
\left({e_\ood\cdot\lambda_1}\right).
\]
Set $\g g:=\g g_\eev\oplus\g g_\ood$ and 
\[A:E\to\g g\ ,\ e_\eev\oplus e_\ood\mapsto
A_\eev (e_\eev)\oplus A_\ood (e_\ood).
\]
Let
$[\,\cdot,\cdot\,]_\g g:\g g\times \g g\to \g g$
be a Lie superbracket induced by 
the Lie superbracket $[\,\cdot,\cdot\,]:E\times E\to E$ given in \eqref{sbrc1c2} as follows:
\[
[x,y]_\g g:=A\big([A^{-1}(x),A^{-1}(y)]\big)
\text{ for every }x,y\in \g g.
\]
Define 
\[
\Ad:G\times \g g\to \g g
\ ,\ 
\Ad(g)(x_\eev\oplus x_\ood):=\Ad_{\Lambda_0}(g)(x_\eev)\oplus\Ad_{\Lambda_1}
\left(\Gsu_{\iota_{\Lambda_1}}(g)\right)
(x_\ood).
\]
\begin{itemize}
\item[(i)] 
The superbracket $[\,\cdot,\cdot\,]_\g g:\g g\times\g g\to\g g$
is continuous and 
the map $\Ad:G\times \g g\to \g g$ is smooth.
\item[(ii)] The map $\Ad(g):\g g\to \g g$ is an automorphism of $\g g $
for every $g\in G$.
\item[(iii)] Let $y\in\g g$ and $c_y:G\to \g g$ be the map defined by $c_y(g):=\Ad(g)(y)$. Then 
\[
\dd c_y(\yek_{\Lambda_0})(x)
=
[x,y]_\g g
\text{ for every $x\in \g g_\eev$.}
\]

\end{itemize}


\end{theorem}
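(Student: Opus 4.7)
The plan is to transport the Lie algebra structure on each $\Lie(\Gsu_\Lambda)$ back to $\EEsu_\Lambda$ via $S_\Lambda$ and hence to $E$ via Proposition \ref{prp-brkstsame}, so that $\Ad(g)$ on $\g g$ becomes a single ``Grassmann-universal'' action $\hat g$ on $E$ corresponding via $A$.

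For (i), continuity of $[\,\cdot,\cdot\,]_\g g$ follows from \eqref{sbrc1c2} and the fact that $A$ is a continuous linear isomorphism: $A_\eev = S_{\Lambda_0}|_{E_\eev}$ and $A_\ood(e_\ood) = S_{\Lambda_1}(e_\ood \cdot \lambda_1)$. Smoothness of $\Ad$ reduces to that of its two pieces: $\Ad_{\Lambda_0}$ is the standard adjoint action of $G$, and $(g, x_\ood) \mapsto \Ad_{\Lambda_1}(\Gsu_{\iota_{\Lambda_1}}(g))(x_\ood)$ is smooth because $\Gsu_{\iota_{\Lambda_1}}$ is a smooth morphism and the adjoint action of $\Gsu_{\Lambda_1}$ is smooth (its exponential exists smoothly by Proposition \ref{glhasexpsmth}). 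To ensure the odd piece is well-defined, one must also check invariance of $\g g_\ood$: differentiating the identity $\Gsu_{\eps_{\Lambda_1}}(h x h^{-1}) = \Gsu_{\eps_{\Lambda_1}}(h)\, \Gsu_{\eps_{\Lambda_1}}(x)\, \Gsu_{\eps_{\Lambda_1}}(h)^{-1}$ in $x$ at $\yek_{\Lambda_1}$ gives $\dd\Gsu_{\eps_{\Lambda_1}}(\yek_{\Lambda_1}) \circ \Ad_{\Lambda_1}(h) = \Ad_{\Lambda_0}(\Gsu_{\eps_{\Lambda_1}}(h)) \circ \dd\Gsu_{\eps_{\Lambda_1}}(\yek_{\Lambda_1})$, so $\g g_\ood = \ker(\dd\Gsu_{\eps_{\Lambda_1}}(\yek_{\Lambda_1}))$ is preserved.

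For (ii), the key step is to produce a single action $\hat g = \hat g_\eev \oplus \hat g_\ood$ of $G$ on $E$ that encodes both components of $\Ad(g)$. Using Lemma \ref{lemprpstarl}(i) with the natural transformations among $\Lambda_0, \Lambda_1,$ and $\Lambda_2$ (together with $\Lambda_\eev$-linearity from Lemma \ref{lemprpstarl}(ii)), one verifies: $g \star_{\Lambda_0}$ preserves $E_\eev$ and induces $\hat g_\eev$ with $A_\eev \hat g_\eev = \Ad_{\Lambda_0}(g) A_\eev$; $g \star_{\Lambda_1}$ maps $E_\ood \cdot \lambda_1$ into itself and induces $\hat g_\ood$ with $A_\ood \hat g_\ood = \Ad_{\Lambda_1}(\Gsu_{\iota_{\Lambda_1}}(g)) A_\ood$; and for every homogeneous $v \in E$ and $\lambda_I \in \Lambda_2$ with matching parity, $g \star_{\Lambda_2}(v \cdot \lambda_I) = \hat g(v) \cdot \lambda_I$. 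Since $g \star_{\Lambda_2}$ is a Lie algebra automorphism of $(\EEsu_{\Lambda_2}, [\cdot,\cdot]_{\Lambda_2})$ (being the transport of $\Ad_{\Lambda_2}(\Gsu_{\iota_{\Lambda_2}}(g))$) and this bracket equals $[\cdot,\cdot]'_{\Lambda_2}$ by Proposition \ref{prp-brkstsame}, evaluating the automorphism identity on the pairs $(e_\eev, e'_\eev)$, $(e_\eev, e_\ood \cdot \lambda_1)$, and $(e_\ood \cdot \lambda_1, e'_\ood \cdot \lambda_2)$ and carefully tracking the Koszul sign in $[\cdot,\cdot]'_{\Lambda_2}$ proves $\hat g$ is a superalgebra automorphism of $(E, [\cdot,\cdot])$; conjugating by $A$ yields (ii).

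For (iii), treat the two parities of $y$ separately. When $y \in \g g_\eev$, the identity $\dd c_y(\yek_{\Lambda_0})(x) = [x,y]_{\Lie(\Gsu_{\Lambda_0})}$ is standard Lie theory, and by construction of $[\cdot,\cdot]_\g g$ it agrees with $[x,y]_\g g$ on $\g g_\eev$. When $y \in \g g_\ood$, factor $c_y$ as $g \mapsto \Gsu_{\iota_{\Lambda_1}}(g) \mapsto \Ad_{\Lambda_1}(\Gsu_{\iota_{\Lambda_1}}(g))(y)$ and apply the chain rule together with the standard identity for $\Gsu_{\Lambda_1}$ to obtain
\[
\dd c_y(\yek_{\Lambda_0})(x) = [\dd\Gsu_{\iota_{\Lambda_1}}(\yek_{\Lambda_0})(x), y]_{\Lie(\Gsu_{\Lambda_1})}.
\]
Pulling through $S_{\Lambda_1}^{-1}$ gives $[A_\eev^{-1}(x), A_\ood^{-1}(y) \cdot \lambda_1]_{\Lambda_1}$, which by Lemma \ref{existenceofc1c2} equals $\bcye(A_\eev^{-1}(x), A_\ood^{-1}(y)) \cdot \lambda_1$; applying $A$ and comparing with \eqref{sbrc1c2} produces $[x,y]_\g g$. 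The main obstacle is part (ii), where three different brackets ($[\cdot,\cdot]_\g g$, $[\cdot,\cdot]_{\Lambda_2}$, $[\cdot,\cdot]'_{\Lambda_2}$) and two different Lie group structures (used to define $\Ad$) must be reconciled simultaneously, with careful attention to the Koszul sign in the odd-odd case where $(-1)^{|\lambda_1|\cdot|e'_\ood|} = -1$ interacts with the sign convention $[e_\ood, e'_\ood] = -\bcdo(e_\ood, e'_\ood)$ from \eqref{sbrc1c2}.
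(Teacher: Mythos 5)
Your proposal is correct and follows essentially the same route as the paper's proof: invariance of $\g g_\ood$ comes from the compatibility of $\star_\Lambda$ with $\eps_{\Lambda_1}$ (your differentiation of the conjugation identity is just Lemma \ref{lemprpstarl}(i) in group form), the automorphism property is obtained by moving everything into $\EEsu_{\Lambda_2}$, using Lemma \ref{lemprpstarl} to identify $g\star_{\Lambda_2}(\check x\cdot\lambda_i)$ with $A^{-1}(\Ad(g)x)\cdot\lambda_i$, exploiting that $g\star_{\Lambda_2}$ is an automorphism of $[\cdot,\cdot]_{\Lambda_2}$ and converting to the superbracket via Proposition \ref{prp-brkstsame}, and part (iii) is reduced to $y\in\g g_\ood$ and computed through $\Gsu_{\iota_{\Lambda_1}}$ exactly as in the paper. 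Your packaging of the two components of $\Ad(g)$ into a single map $\hat g$ on $E$ and your explicit treatment of the parity cases the paper dismisses as ``similar'' are only presentational refinements, not a different argument.
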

\begin{proof}

(i) Continuity of $[\,\cdot,\cdot\,]_\g g$ follows from the fact that $A$ is a bijective continuous linear transformation with a continuous inverse. Smoothness of $\Ad$ follows from smoothness of $\Ad_{\Lambda_0}$ and $\Ad_{\Lambda_1}$.

(ii)
The inclusion
$\Ad(g)(\g g_\ood)\subseteq\g g_\ood$ 
follows from 
\[
\displaystyle
g\star_{\Lambda_0}\EEsu_{\eps_{\Lambda_1}}(v)
=
\EEsu_{\eps_{\Lambda_1}}\big(
g\star_{\Lambda_1}
v
\big)
\]
which is a consequence of Lemma \ref{lemprpstarl}(i).
Next we prove that 
\[
[\Ad(g)(x),\Ad(g)(y)]_\g g=\Ad(g)([x,y]_\g g)
\]
for every $x,y\in\g g$. It suffices to assume that 
$x,y$ are homogenous. Depending on the parity of $x$ and $y$ there are four cases to consider, but the arguments are similar, and we will only give the argument when $x,y\in\g g_\ood$. Set 
$\check x:=A^{-1}(x)$ and 
$\check y:=A^{-1}(y)$. 
From Lemma \ref{lemprpstarl} it follows that
\[
g\star_{\Lambda_2}(\check x\cdot\lambda_1)=
A^{-1}\big(\Ad(g)(x)\big)\cdot\lambda_1
\text{ \,and \,} 
g\star_{\Lambda_2}(\check y\cdot\lambda_2)=
A^{-1}\big(\Ad(g)(y)\big)\cdot\lambda_2.
\]
Now on the one hand by Proposition \ref{prp-brkstsame},
\begin{align*}
[A^{-1}(\Ad(g)(x))\cdot\lambda_1&,
A^{-1}(\Ad(g)(y))\cdot\lambda_2]_{\Lambda_2}\\
&=
-[A^{-1}(\Ad(g)(x)),A^{-1}(\Ad(g)(y))]\cdot\lambda_1\lambda_2
\end{align*}
and on the other hand from Lemma  \ref{lemprpstarl} it
follows that
\begin{align*}
[A^{-1}(\Ad(g)(x))\cdot\lambda_1&,
A^{-1}(\Ad(g)(y))\cdot\lambda_2]_{\Lambda_2}
=
[g\star_{\Lambda_2}(\check x\cdot\lambda_1),g\star_{\Lambda_2}(\check y\cdot\lambda_2)]_{\Lambda_2}\\
&=g\star_{\Lambda_2}\big(
[\check x\cdot\lambda_1,\check y\cdot\lambda_2]_{\Lambda_2}
\big)
=g\star_{\Lambda_2}(-[\check x,\check y]\cdot\lambda_1\lambda_2)\\
&=-(g\star_{\Lambda_2}[\check x,\check y])\cdot\lambda_1\lambda_2
=-A^{-1}\left(\Ad(g)
([x,y])\right)\cdot\lambda_1\lambda_2.
\end{align*}

(iii) It suffices to prove the statement for $y\in\g g_\ood$. In this case, 
\[
\dd c_y(\yek_{\Lambda_0})(x)
=
[\dd\Gsu_{\iota_{\Lambda_1}}(\yek_{\Lambda_0})(x),y]
_{\Lie(\Gsu_{\Lambda_1})}
=A\big([A^{-1}(x),A^{-1}(y)]\big)
=[x,y]_\g g
\]
which completes the proof.
\end{proof}
Theorem \ref{liesg-hcpair} links
the abstract notion of a Lie supergroup to the more concrete notion of a Harish--Chandra pair. 

In the next definition,
$\mathrm{Aut}(\g g)$ denotes the group of 
automorphisms of $\g g$ 
(not necessarily continuous)
which preserve the $\Z_2$--grading.
\begin{definition}
\label{def-blsupergroup}
A \emph{Harish--Chandra pair} is a pair $(G,\g g)$ satisfying the following properties.
\begin{itemize}
\item[(i)] $\g g:=\g g_\eev\oplus\g g_\ood$ is a
 $\Z_2$--graded locally convex space  endowed with a continuous Lie superbracket 
$[\cdot,\cdot]:\g g\times\g g\to \g g$.
\item[(ii)] $G$ is a Lie group and $\g g_\eev=\mathrm{Lie}(G)$. 
\item[(iii)] There exists a group homomorphism
$\Ad:G\to\mathrm{Aut}(\g g)$ 
such that the map 
\[
G\times \g g\to\g g \ ,\ (g,x)\mapsto \Ad(g)(x)
\] is smooth. 
\item[(iv)] 
If $c_y:G\to \g g$ is the map defined by $c_y(g):=\Ad(g)(y)$ then
\[
\dd_x c_y(\yek_G)=[x,y]\text{ for every }
x\in \g g_\eev\text{ and every }y\in\g g
\] 
where 
$\yek_G\in G$ denotes the identity element of $G$.
\end{itemize}
\end{definition}

\begin{corollary}
\label{cor-hcpr}
The pair $(G,\g g)$ associated to 
$\Gsu$ in
Theorem \ref{liesg-hcpair}
is a Harish--Chandra pair.

\end{corollary}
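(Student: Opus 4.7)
The plan is to assemble Corollary \ref{cor-hcpr} as a direct verification that the data $(G,\g g)$, together with the superbracket $[\,\cdot,\cdot\,]_\g g$ and the map $\Ad$ constructed in Theorem \ref{liesg-hcpair}, satisfy the four axioms of Definition \ref{def-blsupergroup}. Most of what is required has already been established; the role of the corollary is bookkeeping together with checking a couple of items that Theorem \ref{liesg-hcpair} does not assert verbatim.

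First I would dispatch axioms (i) and (ii). The $\Z_2$--graded locally convex structure on $\g g=\g g_\eev\oplus\g g_\ood$ is inherited from $E=E_\eev\oplus E_\ood$ via the bijection $A$: since $A_\eev$ is the differential of $f_{\Lambda_0}$ at $\zro_{\Lambda_0}$ and $A_\ood$ is a continuous linear map from $E_\ood$ into a closed subspace of $\Lie(\Gsu_{\Lambda_1})$, both summands of $\g g$ are locally convex and the grading is topological. Continuity of $[\,\cdot,\cdot\,]_\g g$ is exactly part~(i) of Theorem \ref{liesg-hcpair}, and the identification $\g g_\eev=\Lie(G)$ is built into the definition $G:=\Gsu_{\Lambda_0}$ and $A_\eev=\dd f_{\Lambda_0}(\zro_{\Lambda_0})$, which is a topological isomorphism.

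Next I would verify axiom (iii). That $\Ad(g)$ preserves the $\Z_2$--grading and respects the superbracket was proved in Theorem \ref{liesg-hcpair}(ii), so $\Ad(g)\in\Aut(\g g)$. The group homomorphism property $\Ad(gh)=\Ad(g)\Ad(h)$ splits into its even and odd components: on $\g g_\eev$ it is the standard adjoint representation of the Lie group $\Gsu_{\Lambda_0}$, and on $\g g_\ood$ it reduces, via $A_\ood$ and Lemma \ref{lemprpstarl}, to the composition of the functorial injection $\Gsu_{\iota_{\Lambda_1}}:\Gsu_{\Lambda_0}\hookrightarrow\Gsu_{\Lambda_1}$ with $\Ad_{\Lambda_1}$, both of which are group homomorphisms. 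Smoothness of $G\times\g g\to\g g$ is part~(i) of Theorem \ref{liesg-hcpair}.

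Finally, axiom (iv) is treated by splitting $y=y_\eev+y_\ood$ and using bilinearity. For $y=y_\ood\in\g g_\ood$ the identity $\dd c_{y}(\yek_G)(x)=[x,y]_\g g$ (with $x\in\g g_\eev$) is precisely Theorem \ref{liesg-hcpair}(iii). For $y=y_\eev\in\g g_\eev$ the map $c_y$ is the orbit map of the adjoint action of $\Gsu_{\Lambda_0}$ on its own Lie algebra, so by the standard identification of the derivative of $\Ad$ with $\ad$ in ordinary Lie theory we recover $[x,y_\eev]_{\Lie(G)}=[x,y_\eev]_\g g$. I expect no genuine obstacle here; the only point that requires a moment of care is checking that the two normalizations agree, namely that the restriction of $[\,\cdot,\cdot\,]_\g g$ to $\g g_\eev\times\g g_\eev$ coincides with the Lie bracket of $\Lie(G)$. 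This follows by transporting \eqref{xyldsdt} at $\Lambda=\Lambda_0$ through $A_\eev$ and noting that the construction of $\bcsfr$ in \eqref{sbrc1c2} was designed precisely so that this identification holds.
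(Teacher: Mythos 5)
Your verification is correct and is essentially the route the paper intends: the corollary is a direct check of the axioms of Definition \ref{def-blsupergroup} against the data and statements (i)--(iii) of Theorem \ref{liesg-hcpair}, with the only unstated items (the homomorphism property of $\Ad$ via $\Gsu_{\iota_{\Lambda_1}}$ and the agreement of $[\,\cdot,\cdot\,]_{\g g}$ on $\g g_\eev$ with the bracket of $\Lie(G)$) being exactly the routine points you fill in.
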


\begin{remark}
A result analogous to Corollary \ref{cor-hcpr} holds for analytic Lie supergroups, i.e., group objects in the category of analytic supermanifolds.
The Harish--Chandra pair $(G,\g g)$ associated to an analytic supermanifold satisfies two extra properties: $G$ is an analytic Lie group and the adjoint action $G\times \g g\to\g g$ is analytic.

\end{remark}
\begin{definition}
An \emph{analytic Harish--Chandra pair} is a Harish--Chandra pair $(G,\g g)$ 
where $G$ is an analytic Lie group and the adjoint action $G\times \g g\to \g g$ is analytic.
\end{definition}

\begin{remark}
From the results of this section it follows that 
$\Gsu_\Lambda\simeq G\ltimes N_\Lambda$ for every $\Lambda\in\Gr$, where $N_\Lambda$ is a nilpotent simply connected Lie group with Lie algebra $(\g g\otimes \Lambda^+)_\eev$. If we identify $N_\Lambda$ with its Lie algebra via the exponential map then the action of $G$ on $N_\Lambda$ is the canonical extension of the adjoint action of $G$  to $(\g g\otimes\Lambda^+)_\eev$.
\end{remark}

\section{Left invariant differential operators on Lie supergroups}

To simplify our notation, in this section we 
assume that $\Gsu$ is a Lie supergroup modeled on a locally convex space $\g g=\g g_\eev\oplus\g g_\ood$ (that is, 
$\g g_\eev=E_\eev$ and $\g g_\ood=E_\ood$) and  
$[\,\cdot,\cdot\,]:\g g\times\g g\to\g g$ is the Lie superbracket defined by \eqref{sbrc1c2}. 
Set $G:=\Gsu_{\Lambda_0}$. We will denote the identity element of $G$ by $\yek_G$.

Throughout this section
$(\Usu,f)$ will denote   
the open chart of $\Gsu$ obtained by Lemma 
\ref{VVVsubU} (note that 
$\Usu\sqsubseteq\Egsu$).

Let $h_\Lambda:\Gsu_\Lambda\to \FunC_\Lambda$ 
be a smooth map. Recall that for every 
$v\in\Lie(\Gsu_\Lambda)$ 
the left invariant differential operator $\lddd_v$ on $\Gsu_\Lambda$ is defined by
\[
\lddd_vh_\Lambda(g)
:=
\lim_{s\to 0}\frac{1}{s}
\big(
h_\Lambda(ge^{sv})-h_\Lambda(g)
\big).
\]
The chain rule implies that
$
\lddd_vh_\Lambda(g)
=
\dd h_\Lambda(g)
\big(
\dd_v(l_g)_\Lambda(\yek_\Lambda)
\big)
$.

\subsection{Some technical lemmas}
Our next goal is to prove some basic properties of left invariant differential operators.

\begin{lemma}
\label{gg0u0}
If $\Lambda\in\Gr$ then  every $g\in\Gsu_\Lambda$ can be written as 
$g=g_0f_{\Lambda}(u_0)$ where $g_0:=\mathcal G_{\iota_\Lambda\circ\varepsilon_\Lambda}(g)$ and 
$u_0\in\Usu_{\varepsilon_\Lambda}^{-1}(\zro_{\Lambda_0})\subseteq\Usu_\Lambda$.
\end{lemma}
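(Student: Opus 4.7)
The plan is to exploit the functoriality of $\Gsu$ together with Lemma~\ref{-1m}. Given $g\in\Gsu_\Lambda$, set $g_0:=\Gsu_{\iota_\Lambda\circ\varepsilon_\Lambda}(g)$ and consider the element $h:=g_0^{-1}g\in\Gsu_\Lambda$. The first step is to show that $h$ lies in the "kernel direction," i.e.\ that $\Gsu_{\varepsilon_\Lambda}(h)=\yek_{\Lambda_0}$. Indeed, since $\varepsilon_\Lambda\circ\iota_\Lambda=\mathrm{id}_{\Lambda_0}$, functoriality yields
\[
\Gsu_{\varepsilon_\Lambda}(g_0)
=\Gsu_{\varepsilon_\Lambda\circ\iota_\Lambda\circ\varepsilon_\Lambda}(g)
=\Gsu_{\varepsilon_\Lambda}(g),
\]
and because $\Gsu_{\varepsilon_\Lambda}$ is a group homomorphism (as $\Gsu$ is a group object in $\Sman$), we get $\Gsu_{\varepsilon_\Lambda}(h)=\Gsu_{\varepsilon_\Lambda}(g_0)^{-1}\Gsu_{\varepsilon_\Lambda}(g)=\yek_{\Lambda_0}$.

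The second step invokes Lemma~\ref{-1m} applied to the open chart $(\Usu,f)$: since $\yek_{\Lambda_0}=f_{\Lambda_0}(\zro_{\Lambda_0})\in f_{\Lambda_0}(\Usu_{\Lambda_0})$, the preimage description gives
\[
h\in\Gsu_{\varepsilon_\Lambda}^{-1}\!\bigl(f_{\Lambda_0}(\Usu_{\Lambda_0})\bigr)
=f_\Lambda(\Usu_\Lambda).
\]
Hence there exists $u_0\in\Usu_\Lambda$ with $f_\Lambda(u_0)=h$, which gives the required factorization $g=g_0 f_\Lambda(u_0)$.

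Finally, I would verify that $u_0$ lies in $\Usu_{\varepsilon_\Lambda}^{-1}(\zro_{\Lambda_0})$. Using the naturality square $f_{\Lambda_0}\circ\Usu_{\varepsilon_\Lambda}=\Gsu_{\varepsilon_\Lambda}\circ f_\Lambda$, we compute
\[
f_{\Lambda_0}\bigl(\Usu_{\varepsilon_\Lambda}(u_0)\bigr)
=\Gsu_{\varepsilon_\Lambda}(f_\Lambda(u_0))
=\Gsu_{\varepsilon_\Lambda}(h)
=\yek_{\Lambda_0}
=f_{\Lambda_0}(\zro_{\Lambda_0}),
\]
and injectivity of $f_{\Lambda_0}$ on $\Usu_{\Lambda_0}$ (since $(\Usu,f)$ is an open chart) forces $\Usu_{\varepsilon_\Lambda}(u_0)=\zro_{\Lambda_0}$. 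No real obstacle is expected here; the argument is essentially bookkeeping with the functoriality of $\Gsu$ and the definition of an open chart, and the only point that requires care is the correct invocation of Lemma~\ref{-1m} to place $h$ inside $f_\Lambda(\Usu_\Lambda)$.
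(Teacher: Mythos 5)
Your argument is correct and is exactly the route the paper intends: the paper's proof simply says "follows immediately from Lemma \ref{-1m}," and your three steps (functoriality to see $\Gsu_{\varepsilon_\Lambda}(g_0^{-1}g)=\yek_{\Lambda_0}$, Lemma \ref{-1m} to place $g_0^{-1}g$ in $f_\Lambda(\Usu_\Lambda)$, then naturality plus injectivity of $f_{\Lambda_0}$ to get $\Usu_{\varepsilon_\Lambda}(u_0)=\zro_{\Lambda_0}$) are just that argument written out in full. No gaps.
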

\begin{proof}
Follows immediately from 
Lemma \ref{-1m}.
\end{proof}

\begin{lemma}
\label{proflv}
Let $\Lambda\in\Gr$ and $h_\Lambda:\Gsu_\Lambda\to \FunC_\Lambda$
be $\Lambda$--smooth.
\begin{itemize}
\item[(i)] For every $v\in\Lie(\Gsu_\Lambda)$ 
the map $\lddd_vh_\Lambda:\Gsu_\Lambda\to\FunC_\Lambda$ is $\Lambda$--smooth. 

\item[(ii)] 
Let $x_1,\ldots,x_k\in\Egsu_\Lambda$ and 
$\lambda_{I_1},\ldots,\lambda_{I_k}\in\Lambda_\eev$\,. 
Set $
\tilde x_i:=
\dd_{x_i\cdot\lambda_{I_i}} f_\Lambda(\zro_\Lambda)
$
for $1\leq i\leq k$. 
If
$g\in\Gsu_\Lambda$ then 
\[
\lddd_{\tilde x_1}
\cdots
\lddd_{\tilde x_k}
h_\Lambda(g)
=
\big(
\lddd_{\dd_{x_1} f_\Lambda(\zro_\Lambda)}
\cdots 
\lddd_{\dd_{x_k} f_\Lambda(\zro_\Lambda)}
h_\Lambda(g)\big)
\cdot\lambda_{I_1}\cdots\lambda_{I_k}.
\]

\end{itemize}
\end{lemma}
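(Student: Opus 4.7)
The plan is to prove (i) first, since (ii) invokes it in an induction on $k$.

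For part (i), I would work locally in charts. Fix $g_0 \in \Gsu_\Lambda$, choose a chart $(\Usu',f')$ around $g_0$, and another chart $(\Usu'',f'')$ with the image of $f''_\Lambda$ containing a neighborhood of $g_0 e^{sv}$ for small $s$. Writing
\[
(\lddd_v h_\Lambda)\circ f'_\Lambda(u) \;=\; \frac{\partial}{\partial s}\Big|_{s=0} h_\Lambda\big(f'_\Lambda(u)\cdot e^{sv}\big)
\;=\; \frac{\partial}{\partial s}\Big|_{s=0}(h_\Lambda\circ f''_\Lambda)\big(\Phi(u,s)\big),
\]
where $\Phi(u,s):=(f''_\Lambda)^{-1}\big(f'_\Lambda(u)\cdot e^{sv}\big)$, this becomes $\dd(h_\Lambda\circ f''_\Lambda)(\Phi(u,0))\big(\partial_s\Phi(u,0)\big)$. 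The map $\Phi$ is jointly smooth in $(u,s)$ and $\Lambda$--smooth in $u$ for each fixed $s$, because multiplication in $\Gsu$ is a smooth morphism and chart transitions are $\Lambda$--smooth. Applying $\dd_w$ in $u$ and using the product rule decomposes the derivative into two terms: one involves $\dd^2(h_\Lambda\circ f''_\Lambda)$, which is $\Lambda_\eev$--bilinear by Lemma \ref{huefluehl}(ii); the other is $\dd(h_\Lambda\circ f''_\Lambda)(\Phi(u_0,0))\big(\dd_w\partial_s\Phi(u_0,0)\big)$. Commuting the mixed partial derivatives (valid by joint smoothness) gives $\dd_w\partial_s\Phi(u_0,0)=\partial_s|_{s=0}\dd_w\Phi(u_0,s)$, and since each $\dd_w\Phi(u_0,s)$ is $\Lambda_\eev$--linear in $w$ and the $\Lambda_\eev$--action is continuous, the limit is again $\Lambda_\eev$--linear. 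Both terms are therefore $\Lambda_\eev$--linear in $w$, yielding (i).

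For part (ii), note first that $\Lambda$--smoothness of $f$ at $\zro_\Lambda$ gives $\Lambda_\eev$--linearity of $v\mapsto \dd_v f_\Lambda(\zro_\Lambda)$, hence $\tilde x_i = \dd_{x_i}f_\Lambda(\zro_\Lambda)\cdot\lambda_{I_i}$, where the $\Lambda_\eev$--action on $\Lie(\Gsu_\Lambda)$ is the one transferred via $S_\Lambda$. Thus the statement reduces, by induction on $k$, to the single--step claim: for $y\in\Lie(\Gsu_\Lambda)$ and $\lambda\in\Lambda_\eev$,
\[
\lddd_{y\cdot\lambda}h_\Lambda(g)\;=\;\big(\lddd_y h_\Lambda(g)\big)\cdot\lambda.
\]
This follows from the chain-rule formula $\lddd_y h_\Lambda(g)=\dd h_\Lambda(g)\big(\dd_y(l_g)_\Lambda(\yek_\Lambda)\big)$ by applying $\Lambda_\eev$--linearity twice: once to $v\mapsto \dd_v(l_g)_\Lambda(\yek_\Lambda)$, which is $\Lambda_\eev$--linear because $l_g$ is a smooth morphism of $\Gsu$, and once to $\dd h_\Lambda(g)$, which is $\Lambda_\eev$--linear by hypothesis on $h_\Lambda$. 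To iterate, I need (i) to guarantee that after peeling off $\lddd_{\tilde x_k}$, the function $(\lddd_{\dd_{x_k}f_\Lambda(\zro_\Lambda)} h_\Lambda)\cdot\lambda_{I_k}$ is still $\Lambda$--smooth in $g$, so that the single-step claim applies to $\lddd_{\tilde x_{k-1}}$; the scalar factor $\lambda_{I_k}$ commutes out of any subsequent $\lddd_{\tilde x_i}$ trivially, since $\lddd_v$ is continuous linear in its argument and $\cdot\lambda_{I_k}$ is scalar multiplication on $\FunC_\Lambda$.

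The main obstacle is (i), where establishing $\Lambda$--smoothness of a differentiated function requires simultaneously handling the chart pullback, the mixed partial derivative appearing from the chain rule, and the preservation of $\Lambda_\eev$--linearity under limits. Once (i) is secured, (ii) is a clean induction combining $\Lambda_\eev$--linearity of $\dd f_\Lambda(\zro_\Lambda)$, $\dd(l_g)_\Lambda(\yek_\Lambda)$, and $\dd h_\Lambda(g)$.
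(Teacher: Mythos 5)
Your proposal is correct and follows essentially the same route as the paper's proof: both parts rest on interchanging the mixed partial derivatives and on the $\Lambda_\eev$--linearity of first derivatives of $\Lambda$--smooth composites, with (ii) reduced to the $k=1$ chain-rule computation and an induction that invokes (i) to retain $\Lambda$--smoothness after each step. The only difference is cosmetic: the paper streamlines (i) by noting that for fixed $t$ the map $x\mapsto h_\Lambda\big(g_0 f_\Lambda(x)e^{tv}\big)$ is $\Lambda$--smooth (being $h_\Lambda\circ (l_{g_0})_\Lambda\circ (r_{e^{tv}})_\Lambda\circ f_\Lambda$), so the $s$-derivative is already $\Lambda_\eev$--linear in $w$ and no product-rule splitting into a $\dd^2$ term is needed.
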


\begin{proof}
(i) Fix $g\in\Gsu_\Lambda$. By Lemma
\ref{gg0u0} we can write 
$g=g_0 f_\Lambda(u_0)$ 
where 
$
g_0:=\Gsu_{\iota_\Lambda\circ
\varepsilon_\Lambda}
(g)
$ and $u_0\in \Usu_\Lambda$. 
It suffices to prove that the map
\[
\Usu_\Lambda\to \FunC_\Lambda
\ ,\ 
x\mapsto (\lddd_vh_\Lambda)(g_0f_\Lambda(x))
\]
is $\Lambda$--smooth at $u_0$. Fix 
$w\in \Egsu_\Lambda$. Set $H_w(s,t):=h_\Lambda(g_0f_\Lambda(u_0+sw)e^{tv})$ for $s,t\in\R$ sufficiently close to 0.
Observe that for every $s\in\R$,
\[
\frac{\partial}{\partial t}H_w(s,t)\res{t=0}=
(\lddd_vh_\Lambda)(g_0f_\Lambda(u_0+sw)).
\]
Thus we need to prove that
$
\frac{\partial }{\partial s}
\frac{\partial}{\partial t}
H_w(s,t)\res{s=t=0}
$
is $\Lambda_\eev$--linear in $w$.
The map 
\[
\Usu_\Lambda\to \FunC_\Lambda
\ ,\ 
x
\mapsto
\big( h_\Lambda
\circ (l_{g_0})_\Lambda
\circ (r_{e^{tv}})_\Lambda
\circ f_\Lambda
\big)
(x)
\] 
is $\Lambda$--smooth 
and therefore
$\frac{\partial}{\partial s}H_w(s,t)\res{s=0}=\dd_w
\big(h_\Lambda
\circ (l_{g_0})_\Lambda
\circ (r_{e^{tv}})_\Lambda
\circ f_\Lambda\big)(u_0)$ is 
$\Lambda_\eev$--linear in $w$.
Consequently, 
if $\lambda\in\Lambda_\eev$ then
\begin{align*}
\frac{\partial }{\partial s}
\frac{\partial}{\partial t}
H_{w\cdot \lambda}(s,t)\res{s=t=0}
&=
\frac{\partial }{\partial t}
\big(
\frac{\partial}{\partial s}
H_{w\cdot\lambda}(s,t)
\res{s=0}
\big)
\res{t=0}\\
&=\frac{\partial}{\partial t}
\big(
\frac{\partial}{\partial s}H_w(s,t)
\res{s=0}
\big)
\res{t=0}\cdot\lambda
=
\frac{\partial }{\partial s}
\frac{\partial}{\partial t}
H_{w}(0,0)\cdot\lambda.
\end{align*}

(ii)  First assume $k=1$. 
By the Chain Rule we have
\begin{align*}
\lddd_{\dd_{x_1\cdot \lambda} f_\Lambda(\zro_\Lambda)}
h_\Lambda(g)
&=\dd h_\Lambda(g)
\Big(
\dd (l_g)_\Lambda(\yek_\Lambda)\big(\dd f_\Lambda(\zro_\Lambda)
(x_1\cdot \lambda)\big)
\Big)\\
&=
\dd_{x_1\cdot\lambda}
(h_\Lambda\circ (l_g)_\Lambda\circ f_\Lambda)
(\zro_\Lambda)=
\big(
\dd_{x_1}(
h_\Lambda\circ (l_g)_\Lambda\circ f_\Lambda)
(\zro_\Lambda)
\big)\cdot\lambda
\end{align*}
and again by the Chain Rule the right hand side is equal to 
$
\lddd_{\dd_{x_1} f_\Lambda(\zro_\Lambda)}
h_\Lambda(g)\cdot\lambda
$. 
This completes the proof for the case $k=1$.
The case $k>1$ follows from (i), (ii), and induction on $k$.
\end{proof}

\begin{lemma}
\label{lamlam'rho}
Let $\Lambda,\Lambda'\in\Gr$, 
$\varrho\in\hom_\Gr(\Lambda,\Lambda')$,
$h\in C^\infty(\Gsu,\FunC)$,
and  $w_1,\ldots,w_k\in\Egsu_\Lambda$. 
For every $1\leq i\leq k$ set
$
\tilde w_i:=\dd f_{\Lambda'}(\zro_{\Lambda'})
(\Egsu_\varrho(w_i))
$.
If $g\in\Gsu_\Lambda$ then 
\[
\lddd_{\tilde w_1}\cdots\lddd_{\tilde w_k}
h_{\Lambda'}\big(\Gsu_\varrho(g)\big)
=\FunC_\varrho
\big(
\lddd_{\dd_{w_1} f_\Lambda(\zro_\Lambda)}
\cdots
\lddd_{\dd_{w_k} f_\Lambda(\zro_\Lambda)}
h_\Lambda(g)
\big).
\]

\end{lemma}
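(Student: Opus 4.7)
The plan is to prove the lemma by induction on $k$, with the base case $k=1$ carrying the bulk of the content. The main idea is that every ingredient in the formula --- the chart $f$, the exponential map, the multiplication of $\Gsu$, and the superfunction $h$ --- is natural in $\Gr$, so the identity should follow by unwinding the definition of $\lddd$ and invoking these naturalities in sequence.

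For the base case $k=1$, fix $g\in\Gsu_\Lambda$ and work with small $s\in\R$ so that all exponentials lie in the chosen chart. By definition, $\tilde w_1 = \dd f_{\Lambda'}(\zro_{\Lambda'})(\Egsu_\varrho(w_1))$, and the pull-back of the exponential map introduced after Lemma \ref{expwecommu} gives $e^{s\tilde w_1} = f_{\Lambda'}\!\left(e^{s\Egsu_\varrho(w_1)}\right)$. Lemma \ref{expandvarrho} applied to $sw_1$ yields $e^{s\Egsu_\varrho(w_1)} = \Usu_\varrho(e^{sw_1})$, and naturality of $f$ then gives
\[
e^{s\tilde w_1} = f_{\Lambda'}(\Usu_\varrho(e^{sw_1})) = \Gsu_\varrho\bigl(f_\Lambda(e^{sw_1})\bigr) = \Gsu_\varrho\!\left(e^{s\dd_{w_1}f_\Lambda(\zro_\Lambda)}\right).
\]
Since $\Gsu_\varrho$ is a group homomorphism (a consequence of naturality of the multiplication morphism $\mu$), this upgrades to
\[
\Gsu_\varrho(g)\cdot e^{s\tilde w_1} = \Gsu_\varrho\!\left(g\cdot e^{s\dd_{w_1}f_\Lambda(\zro_\Lambda)}\right).
\]
Now naturality of $h$ yields
\[
h_{\Lambda'}\!\left(\Gsu_\varrho(g)e^{s\tilde w_1}\right) = \FunC_\varrho\!\left(h_\Lambda\!\left(g\cdot e^{s\dd_{w_1}f_\Lambda(\zro_\Lambda)}\right)\right),
\]
and because $\FunC_\varrho:\FunC_\Lambda\to\FunC_{\Lambda'}$ is a continuous $\R$-linear map, it commutes with $\frac{d}{ds}|_{s=0}$, giving the $k=1$ case.

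For the inductive step, assume the conclusion holds for $k-1$ applied to any $h\in C^\infty(\Gsu,\FunC)$, any $g\in\Gsu_\Lambda$, and any $(k-1)$-tuple of vectors. Writing
\[
\lddd_{\tilde w_1}\cdots\lddd_{\tilde w_k}h_{\Lambda'}(\Gsu_\varrho(g)) = \frac{d}{ds}\bigg|_{s=0}\!\left(\lddd_{\tilde w_2}\cdots\lddd_{\tilde w_k}h_{\Lambda'}\right)\!\left(\Gsu_\varrho(g)\cdot e^{s\tilde w_1}\right),
\]
the base-case identity transports $\Gsu_\varrho(g)e^{s\tilde w_1}$ into $\Gsu_\varrho\!\big(g\cdot e^{s\dd_{w_1}f_\Lambda(\zro_\Lambda)}\big)$, and the induction hypothesis applied to $w_2,\ldots,w_k$ and the point $g\cdot e^{s\dd_{w_1}f_\Lambda(\zro_\Lambda)}$ rewrites the inner expression as $\FunC_\varrho$ of the corresponding iterated derivative on $\Gsu_\Lambda$. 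Pulling $\FunC_\varrho$ out of the $s$-derivative once more produces the claimed identity for $k$.

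The main technical care lies in the base step: ensuring that for $|s|$ sufficiently small the exponential $e^{sw_1}$ really lies in the open subfunctor $\Wsu$ of Lemma \ref{expwecommu} so that the pull-back exponential on $\Usu$ is defined and compatible with $\Gsu_\varrho$, and in justifying the interchange of $\FunC_\varrho$ with $\frac{d}{ds}|_{s=0}$. The first point is handled by choosing $s$ in a small neighborhood of $0$ depending only on $w_1$ and the chart; the second is immediate because $\FunC_\varrho$ is continuous and linear. No additional work beyond the naturality of $\mu$, $f$, $h$, and the exponential is needed.
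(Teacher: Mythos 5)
Your proposal is correct and follows essentially the same route as the paper: the $k=1$ case is obtained from the naturality relations for $f$, $\mu$, and $h$ together with the compatibility of $\Gsu_\varrho$ with exponentials, after which one pulls the continuous linear map $\FunC_\varrho$ through the limit, and the case $k>1$ is handled by induction. The only cosmetic difference is that you pass through the chart-level exponential and Lemma \ref{expandvarrho} (hence need $sw_1\in\Wsu_\Lambda$ for small $s$), whereas the paper derives the identity $e^{s\tilde w_1}=\Gsu_\varrho\big(e^{s\dd_{w_1}f_\Lambda(\zro_\Lambda)}\big)$ directly from $\dd\Gsu_\varrho(\yek_\Lambda)$ acting on $\Lie(\Gsu_\Lambda)$, which holds for all $s$; both are sound.
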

\begin{proof}
We only give the argument for $k=1$, as the case 
$k>1$ follows by induction on $k$.
Observe that
$
\Gsu_\varrho\circ f_\Lambda
=f_{\Lambda'}\circ\Usu_\varrho
$ 
and  
$
\dd_w \Usu_\varrho(\zro_\Lambda)=
\Egsu_\varrho(w)
$. 
Thus
\[
\dd f_{\Lambda'}(\zro_{\Lambda'})
\big(
\Egsu_\varrho(w_1)
\big)
=
\dd f_{\Lambda'}(\zro_{\Lambda'})
\big(
\dd \Usu_\varrho(\zro_\Lambda)(w_1)
\big)
=\dd\Gsu_\varrho(\yek_\Lambda)
\big(
\dd f_\Lambda(\zro_\Lambda)(w_1)
\big)
\]
and  for every $s\in\R$ we have
\begin{align*}
e^{
s\dd f_{\Lambda'}(\zro_{\Lambda'})
\left(
\Egsu_\varrho(w_1)
\right)
}
=
e^{
s\dd\Gsu_\varrho(\yek_\Lambda)
\left(
\dd f_\Lambda(\zro_\Lambda)(w_1)
\right)
}=
\Gsu_\varrho
\big(
e^{s\dd f_\Lambda(\zro_\Lambda)(w_1)}
\big).
\end{align*}
It follows that
\begin{align*}
\lddd_{\tilde w_1}h_{\Lambda'}
\big(\Gsu_\varrho(g)\big)&=\lim_{s\to 0}
\frac{1}{s}
\bigg(
h_{\Lambda'}
\big(
\Gsu_\varrho(g)e^{
s\dd f_{\Lambda'}(\zro_{\Lambda'})
\left(\Egsu_\varrho(w_1)\right)
}
\big)
-h_{\Lambda'}\big(\Gsu_\varrho(g)\big)
\bigg)\\
&
=
\lim_{s\to 0}
\frac{1}{s}
\bigg(
h_{\Lambda'}
\big(
\Gsu_\varrho
(ge^{
s\dd f_{\Lambda}(\zro_{\Lambda})
\left(w_1\right)
}
)\big)
-h_{\Lambda'}\big(\Gsu_\varrho(g)\big)
\bigg)\\
&
=
\lim_{s\to 0}
\frac{1}{s}
\bigg(
\FunC_\varrho
\Big( 
h_{\Lambda}
\big(
ge^{
s\dd f_{\Lambda}(\zro_{\Lambda})
\left(w_1\right)
}
\big)
-
h_{\Lambda}(g)
\Big)
\bigg)\\
&=
\FunC_\varrho\bigg( 
\lim_{s\to 0}
\frac{1}{s}
\Big(
h_{\Lambda}
\big(
ge^{
s\dd f_{\Lambda}(\zro_{\Lambda})
\left(w_1\right)
}
\big)
-
h_{\Lambda}(g)
\Big)
\bigg)\\
&=
\FunC_\varrho
\big(
\lddd_{\dd_{w_1}f_{\Lambda}(\zro_\Lambda)}h_{\Lambda}(g)
\big).
\qedhere\end{align*}
\end{proof}

\subsection{Left invariant differential operators on $\Gsu$} 
Our next goal is to define left invariant differential operators 
\[
\ldd_x:C^\infty(\Gsu,\FunC)\to C^\infty(\Gsu,\FunC)
\]
for every $x\in\g g$ (see Remark \ref{rem-diffarelocal}). First we define $\ldd_x$ when $x$ is homogeneous and then we extend it to all of $\g g$ by linearity. 

\begin{lemma}
\label{wguniq}
Let $h\in C^\infty(\Gsu,\FunC)$, $x\in \g g_\ood$,  and $n\geq 0$ be an integer. 
For every $m>n$ set 
$
\tilde x_m:=
\dd_{x\cdot\lambda_m}\fphi_{\Lambda_m}(\zro_{\Lambda_m})
$.
If $g\in\Gsu_{\Lambda_n}$ then there exists a unique $w_g\in\FunC_{\Lambda_n}$ such that
\[
\lambda_m\cdot
\big(
\FunC_{\iota_{n,m}}(w_g)
\big)
=
\lim_{t\to 0}\frac{1}{t}\big(h_{\Lambda_{m}}
(g e^{t\tilde x_m})-h_{\Lambda_{m}}(g)\big)
\text{ for every }m>n.
\]
\end{lemma}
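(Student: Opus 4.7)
The plan is to apply Lemma \ref{lamlam'rho} twice: first to reduce the statement to the case $m=n+1$, and then to isolate the $\lambda_{n+1}$-component of the resulting derivative. Throughout, we identify $g\in\Gsu_{\Lambda_n}$ with its image $\Gsu_{\iota_{n,m}}(g)\in\Gsu_{\Lambda_m}$ via Remark \ref{ntrlinj}; with this identification, the limit on the right-hand side of the claimed identity is precisely $\lddd_{\tilde x_m} h_{\Lambda_m}(g)$, and hence exists because $h$ is smooth.

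To compare the values of $\lddd_{\tilde x_m} h_{\Lambda_m}(g)$ for different $m>n$, introduce $\varrho_m\in\hom_\Gr(\Lambda_{n+1},\Lambda_m)$ defined by $\varrho_m(\lambda_i):=\lambda_i$ for $1\leq i\leq n$ and $\varrho_m(\lambda_{n+1}):=\lambda_m$. Then $\Egsu_{\varrho_m}(x\cdot\lambda_{n+1})=x\cdot\lambda_m$, while $\varrho_m\circ\iota_{n,n+1}=\iota_{n,m}$ so that $\Gsu_{\varrho_m}(g)=g$ under the above identifications. Lemma \ref{lamlam'rho} with $k=1$ therefore yields
\[
\lddd_{\tilde x_m} h_{\Lambda_m}(g)=\FunC_{\varrho_m}\bigl(\lddd_{\tilde x_{n+1}} h_{\Lambda_{n+1}}(g)\bigr).
\]

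Set $Y:=\lddd_{\tilde x_{n+1}} h_{\Lambda_{n+1}}(g)\in\FunC_{\Lambda_{n+1}}\simeq\Lambda_{n+1}\otimes_\R\C$ and expand $Y=\sum_{I\subseteq\{1,\dots,n+1\}} c_I\lambda_I$. To show that the terms with $n+1\notin I$ vanish, apply Lemma \ref{lamlam'rho} once more with $\Lambda=\Lambda'=\Lambda_{n+1}$ and $\sigma\in\hom_\Gr(\Lambda_{n+1},\Lambda_{n+1})$ defined by $\sigma(\lambda_i):=\lambda_i$ for $i\leq n$ and $\sigma(\lambda_{n+1}):=0$. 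Then $\Egsu_\sigma(x\cdot\lambda_{n+1})=0$ while $\Gsu_\sigma(g)=g$, so
\[
0=\lddd_0 h_{\Lambda_{n+1}}(g)=\FunC_\sigma(Y)=\sum_{I\not\ni n+1} c_I\lambda_I,
\]
forcing $c_I=0$ whenever $n+1\notin I$. Hence $Y=\lambda_{n+1}\cdot w_g$ for $w_g:=\sum_{I\subseteq\{1,\dots,n\}} c_{I\cup\{n+1\}}\lambda_I\in\FunC_{\Lambda_n}$.

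Combining these displays, for every $m>n$,
\[
\lddd_{\tilde x_m} h_{\Lambda_m}(g)=\FunC_{\varrho_m}(\lambda_{n+1}\cdot w_g)=\lambda_m\cdot\FunC_{\varrho_m}(w_g)=\lambda_m\cdot\FunC_{\iota_{n,m}}(w_g),
\]
where the last equality uses that $w_g$ only involves $\lambda_1,\ldots,\lambda_n$, on which $\varrho_m$ and $\iota_{n,m}$ agree. Uniqueness follows already from the case $m=n+1$, since multiplication by $\lambda_{n+1}$ is injective on $\Lambda_n\otimes_\R\C\subseteq\Lambda_{n+1}\otimes_\R\C$. The only step demanding genuine thought is the selection of the morphism $\sigma$ that kills $\lambda_{n+1}$ while fixing $\Lambda_n$; once it is in hand, the functoriality built into Lemma \ref{lamlam'rho} delivers both existence of $w_g$ and its compatibility across all $m>n$ automatically.
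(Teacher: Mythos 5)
Your proof is correct and follows essentially the same route as the paper's: there, too, one compares the difference quotients for different $m$ via the generator-renaming morphism and shows the $\lambda_{n+1}$-free part vanishes via the morphism sending $\lambda_{n+1}\mapsto 0$, the only difference being that the paper verifies these functoriality identities by direct computation with $\Gsu_\varrho(e^{t\tilde x_m})=e^{t\tilde x_{m'}}$ instead of citing Lemma \ref{lamlam'rho}. One cosmetic point: your explicit formula for $w_g$ omits the signs $(-1)^{|I|}$ arising from commuting $\lambda_{n+1}$ past $\lambda_I$, but since left multiplication by $\lambda_{n+1}$ is a bijection from $\C\otimes\Lambda_n$ onto the span of the $\lambda_I$ with $n+1\in I$, existence and uniqueness of $w_g$ are unaffected.
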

\begin{proof}
(i) 
For every $m>n$ set 
\[
w_{g,m}:=
\lim_{t\to 0}\frac{1}{t}\big(h_{\Lambda_{m}}
(g e^{t\tilde x_m})-h_{\Lambda_{m}}(g)\big)
.
\]
For every $m'>m>n$ let $\varrho_{m,m'}\in\hom_\Gr(\Lambda_m,\Lambda_{m'})$
by defined by
\[
\varrho_{m,m'}(\lambda_i):=
\begin{cases}
\lambda_i&\text{ if }1\leq i\leq m-1,\\
\lambda_{m'}&\text{ if }i=m.
\end{cases}
\]
If $m'>m>n$ then 
$
\dd_{\tilde x_m}
\Gsu_{\varrho_{m,m'}}(\yek_{\Lambda_m})=\tilde x_{m'}
$
and therefore 
\[
\Gsu_{\varrho_{m,m'}}(e^{t\tilde x_m})
=
e^{\dd_{\tilde x_m}\Gsu_{\varrho_{m,m'}}(\yek_{\Lambda_m})}
=
e^{t\tilde x_{m'}}
\text{ for every $t\in\R$}
.
\] 
It follows that
$
h_{\Lambda_{m'}}
(ge^{t\tilde x_{m'}})
=
\FunC_{\varrho_{m,m'}}\big(h_{\Lambda_m}(g e^{t\tilde x_{m}})\big)
$
for every $g\in \Gsu_\Lambda$ and thus $w_{g,m'}=\FunC_{
\varrho_{m,m'}}(w_{g,m})$. To complete the proof of the Lemma
it is enough  to show that 
$\FunC_{\varepsilon_{n+1,n}}(w_{g,n+1})=0$.
To prove the latter statement note that
\begin{align*}
\Gsu_{\varepsilon_{n+1,n}}(e^{t\tilde x_{n+1}})
&=e^{t\dd\Gsu_{\varepsilon_{n+1,n}}(\yek_{\Lambda_{n+1}})(\tilde x_{n+1})
}
=
e^{t\dd\Gsu_{\varepsilon_{n+1,n}}(\yek_{\Lambda_{n+1}})\left(
\dd_{x\cdot\lambda_{n+1}}
\fphi_{\Lambda_{n+1}}(\zro_{\Lambda_{n+1}})\right)
}\\
&=e^{\dd f_{\Lambda_n}(\zro_{\Lambda_n})
\left(\FunC_{\varepsilon_{n+1,n}}(x\cdot\lambda_{n+1})\right)}
=\yek_{\Lambda_n}
\end{align*}
and thus for every $t\in\R$ we have
\begin{align*}
\FunC_{\varepsilon_{n+1,n}}
\big(
h_{\Lambda_{n+1}}
(ge^{t\tilde x_{n+1}})
-h_{\Lambda_{n+1}}(g)
\big)
&=h_{\Lambda_n}(g\Gsu_{\varepsilon_{n+1,n}}
(e^{t\tilde x_{n+1}}))-h_{\Lambda_n}(g)
=0.
\qedhere
\end{align*}
\end{proof}

\begin{definition}
Let $h\in C^\infty(\Gsu,\FunC)$ and  $\Lambda\in\Gr$. 
If  $x\in \g g_\eev$\, then set
$
\tilde x:=\dd_x\fphi_{\Lambda}
(\zro_{\Lambda})
$ 
and define
\[
(\ldd_x h)_\Lambda(g):=
\lim_{t\to 0}\frac{1}{t}\left(
h_\Lambda(g e^{t\tilde x})-h_\Lambda(g)
\right)
\text{ for every }g\in \Gsu_\Lambda.
\]
For $x\in\g g_\ood$ we define 
\[
(\ldd_xh)_{\Lambda_n}(g):=w_g
\]
where $w_g\in\FunC_{\Lambda_n}$ is given by Lemma 
\ref{wguniq}.

\end{definition}

\begin{proposition}
\label{phcxge}
Let 
$h\in C^\infty(\mathcal G,\FunC)$.
\begin{itemize}
\item[(i)] If $x\in\g g$ then 
$\ldd_xh\in C^\infty(\Gsu,\FunC)$. 
\item[(ii)] If $x\in\g g$ and $g\in\Gsu_{\Lambda_0}$ then $\ldd_x(h\circ l_g)=(\ldd_xh)\circ l_g$ for every $g\in\Gsu_{\Lambda_0}$.

\end{itemize}
\end{proposition}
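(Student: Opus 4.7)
The plan is to handle the even and odd parts of $x$ separately, relying on Lemmas \ref{proflv} and \ref{lamlam'rho} in both cases. Part (ii) will then fall out of the definitions directly.

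For the even case $x\in\g g_\eev$, I would observe that $\tilde x:=\dd_x f_\Lambda(\zro_\Lambda)\in\Lie(\Gsu_\Lambda)$ is well-defined for every $\Lambda$ and that $(\ldd_x h)_\Lambda$ coincides with the classical left-invariant derivative $\lddd_{\tilde x}h_\Lambda$ on the Lie group $\Gsu_\Lambda$. Smoothness of $(\ldd_x h)_\Lambda$ is then standard, $\Lambda$-smoothness is Lemma \ref{proflv}(i), and naturality in $\Lambda$ is the instance of Lemma \ref{lamlam'rho} with $k=1$ and $w_1=x$ (using that $\Egsu_\varrho(x)=x$, since $x\in E_\eev\otimes 1_{\Lambda_0}$ and $\varrho$ fixes the unit).

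For the odd case $x\in\g g_\ood$, the plan is to express $(\ldd_x h)_{\Lambda_n}$ as a composition of known smooth maps via the defining relation. Let $\pi_n\colon\FunC_{\Lambda_{n+1}}\to\FunC_{\Lambda_n}$ denote the continuous $\Lambda_{n,\eev}$-linear map that extracts the $\lambda_{n+1}$-coefficient in the decomposition $\FunC_{\Lambda_{n+1}}=\FunC_{\Lambda_n}\oplus\lambda_{n+1}\FunC_{\Lambda_n}$. Taking $m=n+1$ in Lemma \ref{wguniq} rewrites the definition as
\[
(\ldd_x h)_{\Lambda_n}(g)=\pi_n\Bigl(\bigl(\lddd_{\tilde x_{n+1}}h_{\Lambda_{n+1}}\bigr)\bigl(\Gsu_{\iota_{n,n+1}}(g)\bigr)\Bigr),
\]
and smoothness plus $\Lambda_n$-smoothness of the right-hand side follow from Lemma \ref{proflv}(i), the smoothness of the natural transformation $\Gsu_{\iota_{n,n+1}}$, and the continuity and $\Lambda_{n,\eev}$-linearity of $\pi_n$. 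For naturality, given $\varrho\in\hom_\Gr(\Lambda_n,\Lambda_{n'})$ I would extend it to $\tilde\varrho\in\hom_\Gr(\Lambda_{n+1},\Lambda_{n'+1})$ by $\tilde\varrho|_{\Lambda_n}:=\varrho$ and $\tilde\varrho(\lambda_{n+1}):=\lambda_{n'+1}$, apply Lemma \ref{lamlam'rho} to $\tilde\varrho$ with $k=1$ and $w_1=x\lambda_{n+1}$, and then read off the $\lambda$-coefficients using $\FunC_{\tilde\varrho}(\lambda_{n+1}w)=\lambda_{n'+1}\FunC_\varrho(w)$ together with the uniqueness clause in Lemma \ref{wguniq}.

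Part (ii) will follow in both parities from the definitions: since left multiplication commutes with right multiplication, the equality $(\ldd_x(h\circ l_g))_\Lambda(a)=(\ldd_x h)_\Lambda(ga)$ holds term by term inside the defining limit, directly for even $x$ and inside the $m=n+1$ characterization of Lemma \ref{wguniq} for odd $x$ (where uniqueness of $w_g$ closes the argument). The main obstacle I anticipate is simply setting up the projection $\pi_n$ cleanly and confirming that the formula above really preserves $\Lambda_n$-smoothness; once this is done, both smoothness and naturality come for free from the even-case machinery.
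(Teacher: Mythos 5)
Your proposal is correct and follows essentially the same route as the paper: the paper's proof of (i) consists precisely of invoking Lemma \ref{proflv}(i) and Lemma \ref{lamlam'rho} for $x\in\g g_\eev$, and those two lemmas together with the defining property from Lemma \ref{wguniq} for $x\in\g g_\ood$, while (ii) is noted as immediate from the definition. Your explicit projection $\pi_n$ and the extension $\tilde\varrho$ of $\varrho$ merely spell out the steps the paper leaves implicit, so there is no substantive difference.
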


\begin{proof}

(i) We can assume $x$ is homogeneous. If $x\in\g g_\eev$ then the statement follows from Lemma
\ref{proflv}(i) and Lemma \ref{lamlam'rho}.
If $x\in \g g_\ood$ then the statement follows from
Lemma
\ref{proflv}(i), Lemma \ref{lamlam'rho}, and the definition
of $\ldd_x$. 


(ii) Straightforward from the definition.\qedhere
\end{proof}


\begin{lemma}
\label{Dxvsdx}
Let 
$x_1,\ldots,x_k\in \g g$ be homogeneous. Let $\Lambda\in\Gr$
and 
$\lambda_{I_1},\ldots,\lambda_{I_k}\in\Lambda$ satisfy 
$|\lambda_{I_i}|=|x_i|$. Let $\tilde x_i:=
\dd_{x_i\cdot\lambda_{I_i}} 
f_\Lambda(\zro_\Lambda)$ 
for $1\leq i\leq k$. Let $m\geq 0$ be an integer. 
Assume that
\[
I_i\cap \{r\in \N\ :\ r\leq m\}=\varnothing
\text{\ \,for $1\leq i\leq k$.}
\]
If $g\in\Gsu_{\Lambda_m}$
then 
\[
\lddd_{\tilde x_1}
\cdots
\lddd_{\tilde x_k}
h_{\Lambda}(g)
=
\lambda_{I_k}\cdots\lambda_{I_1}\cdot\big(
(\ldd_{x_1}\cdots \ldd_{x_k}h)_{\Lambda_m}(g)\big).
\]
\end{lemma}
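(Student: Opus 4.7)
The plan is to induct on $k$, proving the base case $k=1$ in a strengthened form that is valid at \emph{every} $g' \in \Gsu_\Lambda$ (not only at $g \in \Gsu_{\Lambda_m}$). Once that is in hand, the inductive step is essentially bookkeeping, so the entire technical weight of the argument lies in the base case.

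For the base case I would split on the parity of $x_1$. If $x_1 \in \g g_\eev$, then $\lambda_{I_1}$ is even and a single application of Lemma \ref{proflv}(ii) gives $\lddd_{\tilde x_1} h_\Lambda(g') = \lddd_{\dd_{x_1} f_\Lambda(\zro_\Lambda)} h_\Lambda(g') \cdot \lambda_{I_1}$; centrality of the even scalar $\lambda_{I_1}$ together with the definition of $\ldd_{x_1}$ on even elements then yields $\lddd_{\tilde x_1} h_\Lambda(g') = \lambda_{I_1} \cdot (\ldd_{x_1} h)_\Lambda(g')$. If $x_1 \in \g g_\ood$, write $\Lambda = \Lambda_n$ and introduce $\varrho \in \hom_\Gr(\Lambda_{n+1}, \Lambda_n)$ defined by $\varrho(\lambda_i) := \lambda_i$ for $i \leq n$ and $\varrho(\lambda_{n+1}) := \lambda_{I_1}$; this is a well-defined $\Z_2$--graded homomorphism because $\lambda_{I_1}$ is odd, hence squares to zero and anticommutes with every $\lambda_i$. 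Taking $w_1 := x_1 \cdot \lambda_{n+1} \in \Egsu_{\Lambda_{n+1}}$, one checks that $\dd f_{\Lambda_n}(\zro_{\Lambda_n})(\Egsu_\varrho(w_1)) = \tilde x_1$, while $\varrho \circ \iota_{n,n+1} = \mathrm{id}_{\Lambda_n}$ gives $\Gsu_\varrho(g') = g'$. Applying Lemma \ref{lamlam'rho} then reduces the computation to $\lddd_{\tilde x_1^{(n+1)}} h_{\Lambda_{n+1}}(g')$, where $\tilde x_1^{(n+1)} := \dd_{x_1 \cdot \lambda_{n+1}} f_{\Lambda_{n+1}}(\zro_{\Lambda_{n+1}})$, and Lemma \ref{wguniq}, specialized at $g' \in \Gsu_{\Lambda_n}$ with the fresh odd generator $\lambda_{n+1}$, rewrites this quantity as $\lambda_{n+1} \cdot \FunC_{\iota_{n,n+1}}((\ldd_{x_1} h)_{\Lambda_n}(g'))$. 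Applying $\FunC_\varrho$ collapses the result to $\lambda_{I_1} \cdot (\ldd_{x_1} h)_{\Lambda_n}(g')$, which is the strengthened base case.

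For the inductive step, assume the lemma for $k-1$. The strengthened base case applied to $h$ with $(x, \lambda_I) = (x_k, \lambda_{I_k})$ gives the pointwise identity $\lddd_{\tilde x_k} h_\Lambda = \lambda_{I_k} \cdot (\ldd_{x_k} h)_\Lambda$ on all of $\Gsu_\Lambda$. Since each $\lddd_{\tilde x_i}$ is $\R$--linear and $\lambda_{I_k}$ is a fixed scalar in $\Lambda$, this scalar pulls out through every subsequent differentiation, so
\[
\lddd_{\tilde x_1} \cdots \lddd_{\tilde x_k} h_\Lambda(g) = \lambda_{I_k} \cdot \lddd_{\tilde x_1} \cdots \lddd_{\tilde x_{k-1}} (\ldd_{x_k} h)_\Lambda(g).
\]
By Proposition \ref{phcxge}(i) we have $\ldd_{x_k} h \in C^\infty(\Gsu, \FunC)$, so the inductive hypothesis applies with it in place of $h$, the same $m$, and the indices $I_1, \ldots, I_{k-1}$, which inherit the disjointness condition; this produces $\lambda_{I_{k-1}} \cdots \lambda_{I_1} \cdot (\ldd_{x_1} \cdots \ldd_{x_k} h)_{\Lambda_m}(g)$. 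Concatenating with the $\lambda_{I_k}$ factor yields $\lambda_{I_k} \cdots \lambda_{I_1} \cdot (\ldd_{x_1} \cdots \ldd_{x_k} h)_{\Lambda_m}(g)$, as required.

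The main obstacle is precisely the necessity of the strengthening: the literal statement restricted to $g \in \Gsu_{\Lambda_m}$ is too weak to survive one more differentiation, because the trajectory $g \mapsto g e^{t\tilde x_k}$ used in defining $\lddd_{\tilde x_k}$ generically leaves $\Gsu_{\Lambda_m}$. Lifting through an auxiliary odd generator $\lambda_{n+1}$ and interpreting the outcome through Lemma \ref{wguniq} at an arbitrary base point is the device that removes the $g \in \Gsu_{\Lambda_m}$ restriction and allows the induction to close.
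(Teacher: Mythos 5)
Your proposal is correct, but it follows a genuinely different route from the paper's proof. The paper first invokes Lemma \ref{proflv}(ii) to reduce to the case where each $I_i$ is empty or a singleton, then (its Case 1) permutes the generators via Lemma \ref{lamlam'rho} so that the surviving indices increase, and inducts on $k$ by peeling off the \emph{outermost} operator $\lddd_{\tilde x_1}$: the point is that the curve $s\mapsto g e^{s\tilde x_1}$ stays inside $\Gsu_{\Lambda_{m_{i_1}}}$, which is exactly where the inductive hypothesis for $\lddd_{\tilde x_2}\cdots\lddd_{\tilde x_k}h_\Lambda$ is available; the situation of a repeated generator is handled separately (its Case 2), both sides vanishing. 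You instead strengthen the $k=1$ statement to an identity valid at \emph{every} point of $\Gsu_\Lambda$ --- proved, for odd $x_1$, by adjoining a fresh generator $\lambda_{n+1}$, applying Lemma \ref{wguniq} (that is, the very definition of $\ldd_{x_1}$) at the arbitrary base point, and pushing down along the substitution $\varrho(\lambda_{n+1}):=\lambda_{I_1}$, which is a legitimate morphism in $\Gr$ precisely because $\lambda_{I_1}$ is odd --- and then induct by peeling off the \emph{innermost} operator, the constant Grassmann factor $\lambda_{I_k}$ passing through the remaining $\R$--linear operators so that the inductive hypothesis is applied to $\ldd_{x_k}h$ (smooth by Proposition \ref{phcxge}(i)) at the original point $g$. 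This buys three simplifications at once: no reduction to singleton index sets, no reordering of generators, and no separate degenerate case (if a generator repeats, $\lambda_{I_k}\cdots\lambda_{I_1}=0$ and your identity reads $0=0$), at the modest cost of observing that any odd monomial can be the image of a generator under a $\Gr$--morphism. Your closing remark also identifies the shared crux accurately: the naive induction fails because the perturbed points leave $\Gsu_{\Lambda_m}$, and one must either enlarge the class of admissible base points (your route) or order the generators so the perturbed points remain at a level where the hypothesis applies (the paper's route).
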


\begin{proof}
By Lemma \ref{proflv}(ii) the proof is reduced to the case where each $I_i$ has at most one element. 

Let $i_1<\cdots <i_\ell$ be such that $I_i=\{m_i\}$ 
if and only if 
$i\in \{i_1,\ldots,i_\ell\}$. 
There are two cases to consider.\\
\textbf{Case 1.} The $m_{i_j}$'s 
are pairwise distinct numbers. Using Lemma \ref{lamlam'rho} with a homomorphism
$\varrho:\Lambda\to\Lambda$ which suitably permutes the generators of $\Lambda$, we can assume that $m_{i_1}<\cdots<m_{i_\ell}$. The case $k=1$ follows from the definition of the differential operator $\ldd_x$. Next we prove the case $k>1$ by induction on $k$. Assume that $I_1\neq \varnothing$. (The argument for the case $I_1=\varnothing$ is similar.) It follows that $I_i\cap \{r\in\N\ :\ r\leq m_{i_1}\}=\varnothing$ for 
$2\leq i\leq k$.
Set 
$
\tilde h:=
\ldd_{x_2}\cdots\ldd_{x_k}h
$. 
Then
\begin{align*}
\lddd_{\tilde x_1}
\cdots
\lddd_{\tilde x_k}
h_{\Lambda}(g)
&=
\lim_{s\to 0}
\frac{1}{s}
\Big(
\lddd_{\tilde x_2}
\cdots
\lddd_{\tilde x_k}
h_\Lambda(ge^{s \dd_{\tilde x_1} f_\Lambda(\zro_\Lambda)})
-
\lddd_{\tilde x_2}
\cdots
\lddd_{\tilde x_k}
h_\Lambda(g)
\Big)\\
&=
\lim_{s\to 0}
\frac{1}{s}
\Big(
\lambda_{m_{i_\ell}}
\cdots
\lambda_{m_{i_2}}
\cdot
\big(\tilde h_{\Lambda_{i_1}}
(ge^{s \dd_{\tilde x_1} f_\Lambda(\zro_\Lambda)})
-
\tilde h_{\Lambda_{i_1}}(g)
\big)\Big)\\
&=
\lambda_{m_{i_\ell}}
\cdots
\lambda_{m_{i_1}}
\cdot
(\ldd_{x_1}\tilde h)_{\Lambda_{m_{i_1}}}(g)
=
\lambda_{m_{i_\ell}}
\cdots
\lambda_{m_{i_1}}
\cdot
(\ldd_{x_1}\tilde h)_{\Lambda_m}(g)
\\
&=
\lambda_{m_{i_\ell}}
\cdots
\lambda_{m_{i_1}}\cdot
(\ldd_{x_1}\cdots \ldd_{x_k}h)_{\Lambda_m}(g).
\end{align*}
\textbf{Case 2.} There exist $1\leq a<b\leq \ell$ such that $m_{i_a}=m_{i_b}$. In this case we prove that
$
\big(\lddd_{\tilde x_1}
\cdots
\lddd_{\tilde x_k}
\big)
h_{\Lambda}(g)=0
$.
This follows from Case 1 and taking
$\varrho:\Lambda_{m+\ell}\to\Lambda$ defined by
\[
\varrho(\lambda_{j})=
\begin{cases}
\lambda_j&\text{ if }1\leq j\leq m,\\
\lambda_{m_{i_j}}&\text{ otherwise}
\end{cases}
\]
in Lemma \ref{lamlam'rho}.
\end{proof}

\begin{remark}
\label{rem-diffarelocal}
Since the definition of $\ldd_x$ is local, for every
$\mathcal Y\sqsubseteq\Gsu$ one can restrict $\ldd_x$ to a differential operator $\ldd_x:C^\infty(\mathcal Y,\FunC)
\to C^\infty(\mathcal Y,\FunC)$. Our statements regarding properties of $\ldd_x$  can be adapted suitably 
to hold for the restriction of $\ldd_x$ to $C^\infty(\mathcal Y,\FunC)$.

\end{remark}

\subsection{Extending $\ldd_x$ to the universal enveloping algebra}
Let $\g g_\C:=\g g\otimes_\R \C$ and 
$U(\g g_\C)$ denote the universal enveloping algebra of $\g g_\C$.
The next lemma shows that the definition of $\ldd_x$ can be extended to every $x\in U(\g g_\C )$.

\begin{lemma}
\label{cmutrel}
If $x,y\in \g g$ are homogeneous then
\[
\big(
\ldd_x\ldd_yh-(-1)^{|x|\cdot|y|}\ldd_y\ldd_xh
\big)_\Lambda(g)=(\ldd_{[x,y]}h)_\Lambda(g)
\text{ for every }g\in \Gsu_\Lambda.
\]
\end{lemma}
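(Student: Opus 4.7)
The plan is to lift the super-commutator identity on $\Gsu$ to the classical commutation relation for left-invariant differential operators on the ordinary Lie group $\Gsu_{\Lambda_N}$ for a sufficiently large $N$, using Lemma \ref{Dxvsdx} as a translation dictionary. Since every $g\in \Gsu_\Lambda$ can (after relabelling $\Lambda$) be treated as coming from some $\Gsu_{\Lambda_n}$, it suffices to prove the identity for $\Lambda=\Lambda_n$ and $g\in\Gsu_{\Lambda_n}$. Embedding into $\Lambda_N$ with $N:=n+2$, pick Grassmann monomials $\lambda_{I_1},\lambda_{I_2}\in\Lambda_N$ of parities matching $|x|,|y|$ respectively, with $I_1\subseteq\{n+1\}$ and $I_2\subseteq\{n+2\}$ (concretely, $\lambda_{I_j}:=1$ if the parity is even, $\lambda_{I_j}:=\lambda_{n+j}$ if odd). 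Set $\tilde x:=\dd_{x\cdot\lambda_{I_1}}f_{\Lambda_N}(\zro_{\Lambda_N})$ and $\tilde y:=\dd_{y\cdot\lambda_{I_2}}f_{\Lambda_N}(\zro_{\Lambda_N})$ in $\Lie(\Gsu_{\Lambda_N})$.

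Applying Lemma \ref{Dxvsdx} for $k=2$ (twice, once in each ordering), one obtains
\[
\lddd_{\tilde x}\lddd_{\tilde y}h_{\Lambda_N}(g)=\lambda_{I_2}\lambda_{I_1}\cdot(\ldd_x\ldd_y h)_{\Lambda_n}(g),
\qquad
\lddd_{\tilde y}\lddd_{\tilde x}h_{\Lambda_N}(g)=(-1)^{|x||y|}\lambda_{I_2}\lambda_{I_1}\cdot(\ldd_y\ldd_x h)_{\Lambda_n}(g),
\]
where the sign in the second equality comes from exchanging $\lambda_{I_1}\lambda_{I_2}$ and $\lambda_{I_2}\lambda_{I_1}$. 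Subtracting produces exactly $\lambda_{I_2}\lambda_{I_1}$ times the super-commutator on the left-hand side of the identity we wish to prove.

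Next, on the Lie group $\Gsu_{\Lambda_N}$ (which carries a smooth exponential by Proposition \ref{glhasexpsmth}), one invokes the classical commutator identity $[\lddd_{\tilde x},\lddd_{\tilde y}]=\lddd_{[\tilde x,\tilde y]_{\Lie(\Gsu_{\Lambda_N})}}$. The bracket on the right is computed as follows: by definition of $[\cdot,\cdot]_{\Lambda_N}$ and Proposition \ref{prp-brkstsame},
\[
[\tilde x,\tilde y]_{\Lie(\Gsu_{\Lambda_N})}=S_{\Lambda_N}\bigl([x\cdot\lambda_{I_1},\,y\cdot\lambda_{I_2}]'_{\Lambda_N}\bigr)=(-1)^{|x||y|}\,\dd_{[x,y]\cdot\lambda_{I_1}\lambda_{I_2}}f_{\Lambda_N}(\zro_{\Lambda_N}).
\]
A further application of Lemma \ref{Dxvsdx} with $k=1$, applied to the homogeneous element $[x,y]$ tensored with the monomial $\lambda_{I_1}\lambda_{I_2}$, gives
\[
\lddd_{[\tilde x,\tilde y]}h_{\Lambda_N}(g)=(-1)^{|x||y|}\lambda_{I_1}\lambda_{I_2}\cdot(\ldd_{[x,y]}h)_{\Lambda_n}(g)=\lambda_{I_2}\lambda_{I_1}\cdot(\ldd_{[x,y]}h)_{\Lambda_n}(g),
\]
where one again converts $\lambda_{I_1}\lambda_{I_2}$ into $\lambda_{I_2}\lambda_{I_1}$ at the cost of a sign $(-1)^{|x||y|}$. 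Equating the two expressions for the commutator yields
\[
\lambda_{I_2}\lambda_{I_1}\cdot\bigl(\ldd_x\ldd_y h-(-1)^{|x||y|}\ldd_y\ldd_x h\bigr)_{\Lambda_n}(g)=\lambda_{I_2}\lambda_{I_1}\cdot(\ldd_{[x,y]}h)_{\Lambda_n}(g),
\]
and multiplication by the nonzero monomial $\lambda_{I_2}\lambda_{I_1}$ is injective on $\FunC_{\Lambda_n}\hookrightarrow\FunC_{\Lambda_N}$, so the prefactor can be cancelled.

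The main obstacle is the justification of the classical identity $[\lddd_{\tilde x},\lddd_{\tilde y}]=\lddd_{[\tilde x,\tilde y]}$ in the locally convex setting with only a smooth (not analytic) exponential map; this is a standard fact but ought to be derived here from the defining formula \eqref{xyldsdt} together with a Taylor expansion of $h_{\Lambda_N}$ along the commutator curve $e^{s\tilde x}e^{t\tilde y}e^{-s\tilde x}$, or alternatively by combining Lemma \ref{lemsymmetr} with the polarization of the symmetric expression it produces. A secondary but tedious matter is the bookkeeping of Koszul signs: the identity of Lemma \ref{Dxvsdx} produces the Grassmann factors in the \emph{reverse} order $\lambda_{I_k}\cdots\lambda_{I_1}$, whereas the definition of $[\cdot,\cdot]'_\Lambda$ and the bracket on $E$ naturally place them in the order $\lambda_{I_1}\lambda_{I_2}$, so one must track a transposition sign in each application; the two signs conspire to cancel, which is what makes the final identity sign-free on the two sides.
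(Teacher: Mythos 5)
Your proposal is correct and follows essentially the same route as the paper's own proof: lift $x,y$ to $\Lie(\Gsu_{\Lambda_{n+2}})$ via auxiliary Grassmann generators of matching parity, translate both sides through Lemma \ref{Dxvsdx}, invoke the classical identity $\lddd_{\tilde x}\lddd_{\tilde y}-\lddd_{\tilde y}\lddd_{\tilde x}=\lddd_{[\tilde x,\tilde y]}$ on the ordinary Lie group $\Gsu_{\Lambda_{n+2}}$ (which the paper also uses, silently), compute $[\tilde x,\tilde y]$ via Proposition \ref{prp-brkstsame}, and cancel the Grassmann prefactor. The only cosmetic difference is that you treat all parity cases uniformly with the monomials $\lambda_{I_1},\lambda_{I_2}$, whereas the paper writes out only the odd--odd case and declares the others similar; your sign bookkeeping agrees with the paper's.
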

\begin{proof}
We only give the argument for the case $x,y\in\g g_\ood$.
The remaining cases are similar. 
Let $\Lambda:=\Lambda_n$ and set
\[
\tilde x:=\dd_{x\cdot\lambda_{n+1}} \fphi_{\Lambda_{n+2}}(\zro_{\Lambda_{n+2}})
\text{\, and \,}
\tilde y:=\dd_{y\cdot\lambda_{n+2}} \fphi_{\Lambda_{n+2}}(\zro_{\Lambda_{n+2}}).
\]
Thus
$\tilde x,\tilde y\in\Lie(\Gsu_{\Lambda_{n+2}})$ 
and
$
[\tilde x,\tilde y]=
\dd f_{\Lambda_{n+2}}(\zro_{\Lambda_{n+2}})
({-[x,y]\cdot\lambda_{n+1}\lambda_{n+2}})
$.
From Lemma \ref{Dxvsdx} it follows that
\[
\lddd_{\tilde x}\lddd_{\tilde y}h_{\Lambda_{n+2}}(g)=
\lambda_{n+2}\lambda_{n+1}
\cdot(\ldd_x\ldd_yh)_\Lambda(g)=
-\lambda_{n+1}\lambda_{n+2}
\cdot(\ldd_x\ldd_yh)_\Lambda(g).
\]
Similarly, 
$
\lddd_{\tilde y}\lddd_{\tilde x}h_{\Lambda_{n+2}}(g)=
\lambda_{n+1}\lambda_{n+2}
\cdot(\ldd_y\ldd_xh)_\Lambda(g)$.
Therefore 
\begin{align*}
\lambda_{n+1}\lambda_{n+2}
\cdot&
\big(
(\ldd_x\ldd_yh)_\Lambda(g)+(\ldd_y\ldd_xh)_\Lambda(g)
\big)\\
&=-\big(\lddd_{\tilde x}\lddd_{\tilde y}
-\lddd_{\tilde y}\lddd_{\tilde x}\big)
h_{\Lambda_{n+2}}(g)
=-\lddd_{[\tilde x,\tilde y]}
h_{\Lambda_{n+2}}(g)\\
&=-\ldd_{-[x,y]}h_\Lambda(g)\cdot
\lambda_{n+1}\lambda_{n+2}
=\ldd_{[x,y]}h_\Lambda(g)\cdot
\lambda_{n+1}\lambda_{n+2}\\
&=\lambda_{n+1}\lambda_{n+2}\cdot
\ldd_{[x,y]}h_\Lambda(g)
\end{align*}
which implies that
$
(\ldd_x\ldd_yh)_\Lambda(g)+(\ldd_y\ldd_xh)_\Lambda(g)
=\ldd_{[x,y]}h_\Lambda(g)
$.
\end{proof}

\subsection{Differentiating the exponential map}
The next lemma will be used in the proof of Theorem \ref{cinfandhom}.

\begin{lemma}
\label{uexpeta}
Let $\Lambda\in\Gr$, 
$v_1,\ldots,v_n\in\g g$ be homogeneous, and 
$\lambda_{I_1},\ldots,\lambda_{I_n}\in\Lambda$ satisfy the following properties.
\begin{itemize}
\item[(i)] $|\lambda_{I_i}|=|v_i|$ for all $1\leq i\leq n$. 
\item[(ii)] $I_i\cap I_j=\varnothing$ for every 
$1\leq i\neq j\leq n$.
\end{itemize}
Then there exists a smooth map
$\eta:\Usu_{\Lambda_0}\to\g g$ such that  
\begin{equation}
\label{eq-exxppes}
\frac{\partial}{\partial t_1}
\cdots
\frac{\partial}{\partial t_n}
(ue^{t_1v_1\lambda_{I_1}+\cdots+t_nv_n\lambda_{I_n}})
\res{t_1=\cdots=t_n=0}
=
\eta(u)\cdot\lambda_{I_1}\cdots\lambda_{I_n}
\end{equation}
for every $u\in\Usu_{\Lambda_0}$. On the left hand side of \eqref{eq-exxppes} 
we use the  identification of $u\in\Usu_{\Lambda_0}$ with 
$\Usu_{\iota_\Lambda}(u)\in \Usu_\Lambda$ (see 
Remark \ref{ntrlinj}).

\end{lemma}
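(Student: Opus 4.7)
The plan is to identify the mixed partial derivative with $\dd^n\Phi_\Lambda(\zro_\Lambda)$ evaluated at $(v_1\lambda_{I_1},\ldots,v_n\lambda_{I_n})$, where $\Phi$ is a suitable smooth natural transformation, and then to apply Lemma \ref{huefluehl}(i) twice to force the output into the stated form.

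First, fix $u\in\Usu_{\Lambda_0}$ and consider the smooth morphism $\Phi$ defined componentwise by
\[
\Phi_\Lambda(v):=f_\Lambda^{-1}\bigl(f_\Lambda(u)\cdot e^{\dd_vf_\Lambda(\zro_\Lambda)}\bigr)
\]
on a suitable open sub-functor of $\EEsu$ containing $\zro$. This $\Phi$ is a composition of smooth morphisms (the chart $f$, left translation by the element $f_\Lambda(u)\in\Gsu_\Lambda$, and the pulled-back exponential from Lemma \ref{expwecommu}); naturality in $\Lambda$ follows from Lemma \ref{expandvarrho} combined with Remark \ref{ntrlinj}. Setting $X(t):=\sum_{i=1}^nt_iv_i\lambda_{I_i}$, the expression on the left-hand side of the claim equals $\partial_{t_1}\cdots\partial_{t_n}\Phi_\Lambda(X(t))\res{t=0}$.

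Since $X(t)$ is linear in $t$, all its mixed partial derivatives of order $\geq 2$ vanish, so Fa\`a di Bruno (Lemma \ref{faadibruno}) collapses the sum over partitions to the single contribution coming from the partition into singletons:
\[
\partial_{t_1}\cdots\partial_{t_n}\Phi_\Lambda(X(t))\res{t=0}
=\dd^n\Phi_\Lambda(\zro_\Lambda)\bigl(v_1\lambda_{I_1},\ldots,v_n\lambda_{I_n}\bigr).
\]
I am therefore reduced to analyzing a single symmetric $n$-linear expression in $\Egsu_\Lambda$. Put $J:=I_1\cup\cdots\cup I_n$, and let $\Lambda_J\subseteq\Lambda$ be the subalgebra generated by $\{\lambda_j:j\in J\}$. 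Applying Lemma \ref{huefluehl}(i) to the projection $\pi_J\in\hom_\Gr(\Lambda,\Lambda)$ defined by $\pi_J(\lambda_j):=\lambda_j$ for $j\in J$ and $\pi_J(\lambda_j):=0$ otherwise (which fixes each $v_i\lambda_{I_i}$ and $\zro_\Lambda$) forces the result to lie in $(\g g\otimes\Lambda_J)_\eev$. Expanding it as $\sum_{K\subseteq J}\alpha_K\lambda_K$ with $\alpha_K\in\g g$, I then apply Lemma \ref{huefluehl}(i) to the scaling $\sigma_s\in\hom_\Gr(\Lambda,\Lambda)$ sending $\lambda_j$ to $s\lambda_j$ for $j\in J$ and fixing $\lambda_j$ for $j\notin J$ (a well-defined $\Z_2$--graded algebra homomorphism for any $s\in\R$). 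Combined with $\R$--multilinearity of $\dd^n\Phi_\Lambda(\zro_\Lambda)$ this yields
\[
\sum_{K\subseteq J}s^{|K|}\alpha_K\lambda_K=s^{|J|}\sum_{K\subseteq J}\alpha_K\lambda_K
\quad\text{for every }s\in\R,
\]
which forces $\alpha_K=0$ whenever $K\subsetneq J$. Only the top-degree term $\alpha_J\lambda_J$ survives, and after absorbing the sign relating the ordered product $\lambda_{I_1}\cdots\lambda_{I_n}$ to $\lambda_J$, I define $\eta(u)$ to be $\pm\alpha_J$ with that sign.

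Smoothness of $\eta:\Usu_{\Lambda_0}\to\g g$ follows because the construction of $\Phi_\Lambda$ depends smoothly on $u$ through the supergroup multiplication, so $u\mapsto\dd^n\Phi_\Lambda(\zro_\Lambda)(v_1\lambda_{I_1},\ldots,v_n\lambda_{I_n})$ is smooth into $\Egsu_\Lambda$, and the extraction of the coefficient of a fixed Grassmann monomial is a continuous linear projection onto the copy of $\g g$ sitting in $(\g g\otimes\Lambda)_\eev$. The main technical obstacle I anticipate is the bookkeeping needed to verify rigorously that $\Phi$ is a smooth natural transformation on an open sub-functor of $\EEsu$ containing $\zro$, so that Lemma \ref{huefluehl}(i) is directly applicable; once this is in place, the Fa\`a di Bruno reduction and the two naturality applications are essentially formal.
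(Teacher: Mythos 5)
Your argument is correct, but it reaches the conclusion by a mechanism different from the paper's. The paper works directly with the group-valued expression: for each $a\in J:=I_1\cup\cdots\cup I_n$ it applies the endomorphism of $\Lambda$ killing $\lambda_a$, uses Lemma \ref{expandvarrho} to see that $\Usu_\varrho\big(ue^{t_1v_1\lambda_{I_1}+\cdots+t_nv_n\lambda_{I_n}}\big)$ does not depend on the corresponding $t_i$, and concludes that every Grassmann monomial occurring in the mixed derivative is divisible by each $\lambda_a$; the complementary fact that no generators outside $J$ occur, and the smoothness of $\eta$, are left implicit there. You instead collapse the mixed derivative, via Fa\`a di Bruno (Lemma \ref{faadibruno}) and linearity of $t\mapsto\sum_i t_iv_i\lambda_{I_i}$, to the single term $\dd^n\Phi_\Lambda(\zro_\Lambda)(v_1\lambda_{I_1},\ldots,v_n\lambda_{I_n})$, and then exploit functoriality twice: the projection $\pi_J$ yields the containment in $(\g g\otimes\Lambda_J)_\eev$ (the half the paper glosses over), while the scaling $\sigma_s$ combined with $\R$--multilinearity of $\dd^n\Phi_\Lambda(\zro_\Lambda)$ gives a homogeneity-of-degree-$|J|$ constraint that kills every $\alpha_K$ with $K\subsetneq J$, replacing the paper's one-generator-at-a-time divisibility argument; your treatment of the smoothness of $\eta$ (joint smoothness in $(u,v)$ of $u\bullet e^v$ plus a continuous coefficient projection) is also what the paper tacitly relies on in Statement A of Theorem \ref{cinfandhom}. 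Both proofs ultimately rest on naturality under $\hom_\Gr(\Lambda,\Lambda)$, so the distance between them is not large, but yours is more explicit on the two points above. One caution on the technical obstacle you flag yourself: Lemma \ref{huefluehl} is stated for smooth morphisms, and the paper never proves that the pulled-back exponential (hence your $\Phi$) is $\Lambda$--smooth; it only establishes naturality with smooth components (Lemma \ref{expandvarrho}, Proposition \ref{glhasexpsmth}). This is harmless, because the proof of Lemma \ref{huefluehl}(i) uses only the relation $\Phi_\Lambda\circ\Egsu_\varrho=\Egsu_\varrho\circ\Phi_\Lambda$ and ordinary smoothness of each $\Phi_\Lambda$, both of which you do have; but you should say so explicitly (or rederive part (i) for $\Phi$ by differentiating that relation) rather than cite the lemma verbatim under its stated hypothesis.
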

\begin{proof}
First observe that differentiation with respect to the $t_i$'s commutes with $\Egsu_\varrho$ for every 
$\varrho\in\hom_\Gr(\Lambda,\Lambda)$. Therefore by
Lemma \ref{expandvarrho} it is enough to prove that,
if 
$a\in I_i$ 
for some $1\leq i\leq n$, 
and 
$\varrho\in\hom_\Gr(\Lambda,\Lambda)$ is defined by
\[
\varrho(\lambda_k):=
\begin{cases}
\lambda_k&\text{ if }k\neq a,\\
0&\text{ otherwise,}
\end{cases}
\]
then for all sufficiently small $t_j$, $j\neq i$, we have
\begin{equation}
\label{eglkddti}
\Egsu_{{\varrho}}
\bigg(
\frac{\partial}{\partial t_i}
(ue^{t_1v_1\lambda_{I_1}+\cdots+t_nv_n\lambda_{I_n}})
\res{t_i=0}
\bigg)=0.
\end{equation}
Without loss of generality we can assume $i=1$. 
By Lemma~\ref{expandvarrho}
\begin{align*}
\Usu_{\varrho}
\big(
ue&^{t_1v_1\lambda_{I_1}+\cdots+t_nv_n\lambda_{I_n}}
\big)
=ue^{\Egsu_\varrho
(t_1v_1\lambda_{I_1}+\cdots+t_nv_n\lambda_{I_n})}\\
&\hspace{1.1cm}=ue^{\Egsu_{\varrho}
(t_2v_2\lambda_{I_2}+\cdots+t_nv_n\lambda_{I_n})}=
\Usu_\varrho
\big(
ue^{t_2v_2\lambda_{I_2}+\cdots+t_nv_n\lambda_{I_n}}
\big).
\end{align*}
Equality \eqref{eglkddti} now follows immediately.
\end{proof}

\subsection{The algebra of smooth superfunctions revisited}
\label{thealgrevis}

Let $C^\infty(G,\C)$ 
(resp., $C^\omega(G,\C)$)
be the space of smooth (resp., analytic)
complex-valued 
functions on $G$. For every $x\in\g g_\eev$ set
$\tilde x:=\dd_x f_{\Lambda_0}(\zro_{\Lambda_0})$. 
The space $C^\infty(G,\C)$
is a $\g g_\eev$--module via the action
\begin{equation}
\label{xcdtpsi}
x\cdot \psi(g):=\lddd_{\tilde x}\psi(g)
\end{equation}
for every $x\in\g g_\eev$,  every 
$\psi\in C^\infty(G,\C)$, and every 
$g\in G$.

Let $
\homrm_{\g g_\eev}(U(\g g_\C),C^\infty(G,\C))
$
denote the complex
vector space of $\C$--linear maps 
\[
\mathbf h:U(\g g_\C)\to C^\infty(G,\C)
\]
which satisfy 
\begin{equation}
\label{EQTXYDXTY}
\big(\mathbf h(xy)\big)(g)=\lddd_{\tilde x}(\mathbf h(y))(g)
\end{equation}
for every 
$
x\in\g g_\eev
$, every
$y\in U(\g g_\C)$, 
and every $g\in G$.

If $\mathbf h\in \homrm_{\g g_\eev}(U(\g g_\C),C^\infty(G,\C))$, then for every $n\geq 1$ 
we set
\begin{equation}
\label{ggGsmuth}
\mathbf h^{[n]}:\g g^n\times G\to \C
\ ,\ \mathbf h^{[n]}(x_1,\ldots,x_n,g):= 
\big(\mathbf h(x_1\cdots x_n)\big)(g).
\end{equation}
Set
\[
\higuc:=\big\{
\mathbf h \in \homrm_{\g g_\eev}(U(\g g_\C),C^\infty(G,\C))
\ :\ \mathbf h^{[n]}\text{ is smooth for every }n\geq 1
\big\}.
\]
Similarly, we define
\[
\higuo:=\big\{
\mathbf h \in \homrm_{\g g_\eev}(U(\g g_\C),C^\omega(G,\C))
\ :\ \mathbf h^{[n]}\text{ is analytic for every }n\geq 1
\big\}.
\]
\begin{remark}
Let $Y\subseteq G$ be an open set. 
Then the space $C^\infty(Y,\C)$ (resp.,
$C^\omega(Y,\C)$) is also a $\g g_\eev$--module via the action given in \eqref{xcdtpsi}.
In the definition of $\higuc$ (resp., $\higuo$),
if we substitute 
$C^\infty(G,\C)$ (resp., $C^\omega(G,\C)$)
by  $C^\infty(Y,\C)$ (resp.,  
$C^\omega(Y,\C)$),  then 
we obtain 
the space
$C^\infty(Y,\g g)$ (resp.,  $C^\omega(Y,\g g)$).

\end{remark}
\begin{theorem}
\label{cinfandhom}
Let $\mathcal Y\sqsubseteq \Gsu$ and $Y:=\mathcal Y_{\Lambda_0}$. The map 
\[
\Phi:
C^\infty(\mathcal Y,\FunC)
\to
C^\infty(Y,\g g)
\ ,\ \Phi(h)(x):=(\ldd_xh)_{\Lambda_0}
\]
is a $\C$--linear isomorphism.
\end{theorem}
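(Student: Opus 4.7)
My strategy is to run both sides of the claimed isomorphism through the skeleton description of Proposition \ref{prpskeleton}: in a chart $(\Usu, f)$, a smooth superfunction $\hat h := h\circ f$ is uniquely determined by its skeleton $\{\hat h_m: U \to \mathrm{Alt}_m(\g g_\ood, \C)\}$, and I will relate skeleton values to iterated left-invariant derivatives via Lemmas \ref{hhnseries}, \ref{uexpeta}, \ref{lemsymmetr}, and \ref{Dxvsdx}.

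\textbf{Well-definedness and linearity.} For $h\in C^\infty(\mathcal Y, \FunC)$, each $\ldd_x h$ again lies in $C^\infty(\mathcal Y, \FunC)$ by Proposition \ref{phcxge}(i), so $\Phi(h)(x) = (\ldd_x h)_{\Lambda_0}\in C^\infty(Y, \C)$. The $\g g_\eev$-module condition \eqref{EQTXYDXTY} follows from $\ldd_{xy} = \ldd_x\ldd_y$ (extending Lemma \ref{cmutrel} to $U(\g g_\C)$) together with $(\ldd_x\psi)_{\Lambda_0} = \lddd_{\tilde x}\psi_{\Lambda_0}$ for $x\in \g g_\eev$. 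Smoothness of each $\Phi(h)^{[n]}$ follows from iterated use of Proposition \ref{phcxge}(i); linearity of $\Phi$ is clear.

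\textbf{Injectivity.} Assume $\Phi(h) = 0$. Work in the identity chart $(\Usu, f)$ and induct on $m$ to prove $\hat h_m\equiv 0$. The case $m = 0$ is immediate, since $\hat h_0 = \Phi(h)(1)\circ f_{\Lambda_0} = 0$. For $m\geq 1$, fix $u\in U$ and odd $x_1, \ldots, x_m\in \g g_\ood$. The chart side gives $\dd^m \hat h_{\Lambda_m}(u)(x_1\lambda_1, \ldots, x_m\lambda_m) = \hat h_m(u)(x_1, \ldots, x_m)\cdot\lambda_1\cdots\lambda_m$ by Lemma \ref{hhnseries}. On the group side, set $\tilde x_i := \dd_{x_i\lambda_i} f_{\Lambda_m}(\zro_{\Lambda_m})$ and observe $f_{\Lambda_m}(u)\,e^{t_1\tilde x_1 + \cdots + t_m\tilde x_m} = f_{\Lambda_m}(u\bullet e^{\sum_i t_i x_i\lambda_i})$, so the same mixed derivative equals $\frac{\partial^m}{\partial t_1\cdots\partial t_m}\big|_{0} \hat h_{\Lambda_m}(u\bullet e^{\sum_i t_i x_i\lambda_i})$. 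Applying Lemma \ref{faadibruno} with Lemma \ref{uexpeta} expands the latter as a sum over partitions of $\{1, \ldots, m\}$: the singleton partition yields a non-degenerate multiple of $\hat h_m(u)(x_1, \ldots, x_m)\cdot\lambda_1\cdots\lambda_m$, while coarser partitions involve $\hat h_k$ for $k < m$ and vanish by the induction hypothesis. Finally, Lemma \ref{lemsymmetr} followed by Lemma \ref{Dxvsdx} identifies the same derivative with $\frac{\lambda_1\cdots\lambda_m}{m!}\sum_{\sigma\in S_m}\mathrm{sgn}(\sigma)(\ldd_{x_{\sigma(1)}}\cdots\ldd_{x_{\sigma(m)}} h)_{\Lambda_0}(f_{\Lambda_0}(u))$, which is zero by hypothesis. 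Comparing forces $\hat h_m(u)(x_1, \ldots, x_m) = 0$. Globalization to all of $\mathcal Y$ uses left-translation invariance of $\ldd_x$ (Proposition \ref{phcxge}(ii)) to run the same argument in a chart around any point of $Y$.

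\textbf{Surjectivity and main obstacle.} For surjectivity, invert the recursion. Given $\mathbf h\in C^\infty(Y, \g g)$, set $\hat h_0 := \mathbf h(1)\circ f_{\Lambda_0}$ and inductively define $\hat h_m(u)(x_1, \ldots, x_m)$ for odd $x_i$'s by solving the relation above: extract the singleton-partition term from $\frac{1}{m!}\sum_\sigma \mathrm{sgn}(\sigma)\mathbf h(x_{\sigma(1)}\cdots x_{\sigma(m)})(f_{\Lambda_0}(u))$ after subtracting the lower-order contributions already determined by $\hat h_0, \ldots, \hat h_{m-1}$. The result is alternating in the $x_i$'s by construction and jointly smooth (using smoothness of $\mathbf h^{[m]}$), so Proposition \ref{prpskeleton} assembles it into $\hat h\in C^\infty(\Usu, \FunC)$; gluing across charts via left translation yields $h\in C^\infty(\mathcal Y, \FunC)$, and reversing the injectivity computation gives $\Phi(h) = \mathbf h$. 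The central technical difficulty is the bookkeeping in this triangular recursion: one must verify that the top-order coefficient (coming from the differential of $v\mapsto u\bullet v$ at $v=0$, which is the pullback of a group left translation, hence invertible) is indeed non-degenerate, and, for the alternating property, that the super-symmetric relation $x_ix_j + x_jx_i = [x_i, x_j]\in \g g_\eev$ inside $U(\g g_\C)$ is correctly cancelled during antisymmetrization by the $\g g_\eev$-module property of $\mathbf h$, so that the unwanted bracket contributions fold into the inductively handled lower-order terms.
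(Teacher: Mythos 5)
Your proposal is correct and follows essentially the same route as the paper's proof: skeletons via Proposition \ref{prpskeleton} and Lemma \ref{hhnseries}, the Fa\`a di Bruno expansion combined with Lemmas \ref{uexpeta}, \ref{lemsymmetr} and \ref{Dxvsdx}, a triangular recursion whose top coefficient is the (invertible) differential of left translation in the chart, and globalization/gluing by left translations; the paper merely organizes the same recursion differently, writing the skeleton of the sought superfunction by an explicit partition-sum formula (its Statement A and \eqref{tdmxa1ak}) and verifying afterwards, rather than solving degree by degree. One small point to tighten: joint smoothness of $\Phi(h)^{[n]}$ in $(x_1,\ldots,x_n,g)$ does not follow from iterating Proposition \ref{phcxge}(i) alone (which gives smoothness only for fixed $x_i$), but from Lemma \ref{Dxvsdx} together with smoothness of the exponential maps of the groups $\Gsu_\Lambda$ (Proposition \ref{glhasexpsmth}).
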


\begin{proof}
Throughout the proof we assume that $\mathcal Y=\Gsu$.
Slight changes render the proof applicable to arbitrary $\mathcal Y\sqsubseteq \Gsu$.

The proof will be given in several steps.

\textbf{Step 1.}
Let $h\in C^\infty(\Gsu,\FunC)$. From
the definition of $\ldd_x$ for $x\in\g g_\eev$, Proposition 
\ref{phcxge},
and Lemma \ref{cmutrel} 
 it follows that 
$\Phi(h)\in\homrm_{\g g_\eev}(U(\g g_\C),C^\infty(G,\C))$.
Lemma \ref{Dxvsdx} and 
smoothness of the exponential map of $\Gsu_\Lambda$ (see Proposition \ref{glhasexpsmth}) imply that
$\Phi(h)^{[n]}$ is smooth for every $n\geq 1$, i.e., 
$\Phi(h)\in C^\infty(G,\g g)$.  
It remains to prove that $\Phi$ is a bijection.

\textbf{Step 2.}
Fix $\Lambda\in\Gr$. By Lemma~\ref{gg0u0} 
every $g\in\Gsu_\Lambda$ can be written as $g=g_0f_\Lambda(u_0)$ where $g_0=\Gsu_{\iota_\Lambda\circ\varepsilon_\Lambda}(g)$.
We can identify $G$ 
with $\mathrm{im}(\Gsu_{\iota_\Lambda})$ 
(see Remark \ref{ntrlinj})
and consequently we will  
denote $\Gsu_{\eps_\Lambda}(g)$ by $g_0$ as well.
For every $h\in C^\infty(\Gsu,\FunC)$ we set $\hck:=h\circ l_{g_0}\circ f$.
Observe that $\hck\in C^\infty(\Usu,\FunC)$.  
Proving that $\Phi$ is an injection amounts to showing that 
if $\Phi(h)=0$ then $\hck=0$ for every  $g\in\Gsu_\Lambda$.

\textbf{Step 3.} Let $h\in C^\infty(\Gsu,\FunC)$ such that $\Phi(h)=0$. Fix $g\in \Gsu_\Lambda$.
For every $v_1,\ldots,v_n\in\Egsu_\Lambda$ 
and every $u\in \Usu_{\Lambda_0}$
set 
\begin{align*}
\frA(u;v_1,&\ldots, v_n)
:=\frac{\partial}{\partial t_1}
\cdots
\frac{\partial}{\partial t_n}
h_\Lambda
\big(
g_0f_\Lambda(u)e^{
t_1\dd_{v_1} f_\Lambda(\zro_\Lambda)
+
\cdots
+
t_n\dd_{v_n} f_\Lambda(\zro_\Lambda)
}\big)\res{t_1=\cdots=t_n=0}
\end{align*}
where $u$ is identified with $\Usu_{\iota_\Lambda}(u)\in\Usu_\Lambda$.
From the definition of 
$\frA(u;v_1,\ldots, v_n)$
it follows immediately that the map 
\[
\Usu_{\Lambda_0}\times
\Egsu_\Lambda\times\cdots\times\Egsu_\Lambda
\to\FunC_\Lambda\ ,\ 
(u,v_1,\ldots,v_n)\mapsto\frA(u;v_1,\ldots,v_n)
\] 
is smooth. Moreover,  
$\frA(u;v_1,\ldots,v_n)$
is linear in  $v_1,\ldots,v_n$.
If we set $\tilde v_i:=\dd_{v_i}f_\Lambda(\zro_\Lambda)$ for $1\leq i\leq n$ then
Lemma \ref{lemsymmetr} implies that
\begin{equation}
\label{al1.ln}
\frA(u;v_1,\ldots, v_n)=\frac{1}{n!}
\sum_{\sigma\in S_n}
\lddd_{\tilde v_{\sigma(1)}}
\cdots
\lddd_{\tilde v_{\sigma(n)}}
h_\Lambda(g_0f_{\Lambda}(u)).
\end{equation}
Since $\Phi(h)=0$, 
Lemma \ref{Dxvsdx} implies that $\frA(u;v_1,\ldots,v_n)=0$ for every $u\in\Usu_{\Lambda_0}$ and every $v_1,\ldots,v_n\in \Egsu_\Lambda$.



\textbf{Step 4.}
Given a set  $A=\{m_1,\ldots,m_k\}\subseteq \mathbb N$,
for every $u\in\Usu_{\Lambda_0}$ and every $v_{m_1},\ldots,v_{m_k}\in
\Egsu_\Lambda$ 
set
\[
\frB(u;v_A):=
\frac{\partial}{\partial t_{m_1}}
\cdots
\frac{\partial}{\partial t_{m_k}}
\big(
ue^{t_{m_1}
v_{m_1}
+
\cdots
+
t_{m_k}
v_{m_k}}
\big)
\res{t_{m_1}=\cdots=t_{m_k}=0}.
\]
Note that the smooth map
\[
\Usu_{\Lambda_0}\times\Egsu_\Lambda\times
\cdots\times\Egsu_\Lambda
\to\Egsu_\Lambda
\ ,\ 
(u,v_{A})\mapsto\frB(u;v_{A})
\]
is $k$-linear in $v_{m_1},\ldots,v_{m_k}$. Moreover, if $A=\{m_1\}$ then 
$\frB(u;v_A)=\dd_{v_{m_1}} (l_u)_\Lambda(\zro_\Lambda)$.
By Lemma~\ref{faadibruno} we can write
\begin{align*}
\notag
\frA(u;v_1,\ldots,v_n)
&=
\frac{\partial}{\partial t_1}
\cdots
\frac{\partial}{\partial t_n}\hck_\Lambda(ue^{t_1v_1+\cdots+t_nv_n})
\res{t_1=\cdots=t_n=0}\\
&=\notag
\sum_{\{A_1,\ldots,A_k\}\in\mathscr P_n}
\dd^k \hck_\Lambda(u)
\big(
\frB(u;v_{A_1}),\ldots,\frB(u;v_{A_k})
\big)
\end{align*}
and therefore
\begin{align}
\label{triangsys}
\frA(u;v_1,\ldots,v_n)
&=
\dd^n \hck_\Lambda(u)(w_1,\ldots,w_n)\\
&+\sum_{k<n}
\sum_{\{A_1,\ldots,A_k\}\in\mathscr P_n}
\dd^k \hck_\Lambda(u)
\big(
\frB(u;v_{A_1}),\ldots,\frB(u;v_{A_k})
\big)
\notag
\end{align}
where $w_i=\dd_{v_i} (l_u)_\Lambda(\zro_\Lambda)$ for every 
$1\leq i\leq n$. 
The map 
\[
\Egsu_\Lambda\to\Egsu_\Lambda\ ,\ 
v\mapsto \dd_v(l_u)_\Lambda(\zro_\Lambda)
\]
is a bijective continuous linear map with a continuous inverse $v\mapsto \dd_v(l_{u^{-1}})_\Lambda(u)$. Thus,
$v_i=\dd_{w_i}(l_{u^{-1}})_\Lambda(u)$.
If we start from any $w_1,\ldots, w_n\in \Egsu_\Lambda$ and then recursively apply 
\eqref{triangsys},
we obtain a linear system 
with finitely many equations with an invertible triangular coefficient matrix. 
Since $\frA(u,v_1,\ldots,v_n)=0$ for every $n$ and every 
$v_1,\ldots, v_n\in\Egsu_\Lambda$, the linear system is homogeneous.
Therefore the unique solution to the linear system is 
the trivial solution. It follows that
\[
\dd^n\hck_\Lambda(u)(w_1,\ldots,w_n)=0\text{ for every }
u\in \Usu_{\Lambda_0} 
\text{ and every }w_1,\ldots,w_n\in\Egsu_\Lambda.
\]
In particular, if $\Lambda=\Lambda_n$ and we set
$w_i=x_i\lambda_i$ where $x_i\in \g g_\ood$ 
for every $1\leq i\leq n$, then 
Lemma 
\ref{hhnseries}
and Proposition \ref{prpskeleton} imply that $\hck=0$. This completes the proof of injectivity of $\Phi$.

\textbf{Step 5.}
We now proceed towards the proof of surjectivity of $\Phi$. 
Lemma \ref{uexpeta}, the linear systems obtained by
\eqref{triangsys}, and  \eqref{al1.ln}
lead to the following statement:

\textsf{Statement A.} Let $n\geq 1$ and $\mathscr P_n$ denote the collection of partitions of $\{1,\ldots,n\}$. Then,
for every $S:=\{A_1,\ldots,A_k\}\in\mathscr P_n$, there exists a family
\[
\mathscr C_S:=\{
\,\big(\frD_{S,i}(u;x_{A_1}),\ldots,\frD_{S,i}(u;x_{A_k})\big)\ :\ i=1,\ldots,c(S)
\,
\}
\]
of $k$-tuples of smooth maps 
\[
\Usu_{\Lambda_0}
\times\g g^{|A_j|}_\ood\to \g g\ ,\ 
(u,x_{A_j})\mapsto\frD_{S,i}(u;x_{A_j})
\]
where $|A_j|$ denotes the cardinality of $A_j$, such that the following statements hold:
\begin{itemize}
\item[(i)] $\frD_{S,i}(u;x_{A_j})$ is linear in $\{x_s\ :\ s\in A_j\}$. 

\item[(ii)] If $h\in C^\infty(\Gsu,\FunC)$, then
for every $x_1,\ldots,x_n\in \g g_\ood$ and every $u\in\Usu_{\Lambda_0}$ we have:
\begin{align}
\label{dnhlx1l1xnln}
(-1)&^{\frac{n(n-1)}{2}}\dd^n \hck_\Lambda(u)(x_1\lambda_1,\ldots,x_n\lambda_n)\\
&=
\hspace{-5mm}\sum_{
\stackrel{S\in \mathscr P_n}{S=\{A_1,\ldots,A_k\}}
}
\hspace{-2.5mm}
\sum_{i=1}^{c(S)}
\big(
\ldd_{\frD_{S,i}(u;x_{A_1})}
\cdots
\ldd_{\frD_{S,i}(u;x_{A_k})}
h\big)_{\Lambda_0}(g_0f_\Lambda(u))
\cdot
\lambda_1\cdots\lambda_n.
\notag
\end{align}

\item[(iii)] Let $k=n$, i.e.,  $S=\{A_1,\ldots,A_n\}$ such that
$A_j=\{j\}$ for every $1\leq j\leq n$. 
Then $c(S)=1$ and 
$\frD_{S,1}(u;x_{\{j\}})=\bfc(u;x_j)$ 
for every $1\leq j\leq n$ where the smooth map
\[
\Usu_{\Lambda_0}\times \g g_\ood\to\g g_\ood
\ ,\
(u,x)\mapsto \bfc(u;x)
\]
is defined by the equality 
$
\bfc(u;x)\cdot\lambda_1
=\dd_{x\cdot\lambda_1}(l_{u^{-1}})_{\Lambda_1}(u)
$.
Moreover, for every $u\in\Usu_{\Lambda_0}$ 
the map 
\[
\g g_\ood\to\g g_\ood\ ,\ x\mapsto \bfc(u;x)
\]
is a bijective continuous linear transformation
with a continuous inverse.
\end{itemize}
The proofs of (i)-(iii) are fairly straightforward.  Part (i) follows from Lemma \ref{uexpeta} and the fact that the $\frD_{S,i}(u;x_A)$'s are obtained by superpositions of the $\frB(u,x_B)$'s.
Part (ii) follows from \eqref{triangsys}
and \eqref{al1.ln}.
Part (iii) follows from \eqref{triangsys} and 
the fact that 
$v\mapsto \dd_v(l_{u^{-1}})_{\Lambda_1}(u)$ is 
a bijective continuous linear transformation with a continuous inverse (see Step 4 above).

\textbf{Step 6.}
For every 
$g_0\in \Gsu_{\Lambda_0}$ 
let $\Vgzu\sqsubseteq \mathcal G$ be defined by
\[
\Vgzu_\Lambda:=(l_{g_0}\circ f)_\Lambda(\Usu_\Lambda)
\text{ for every }\Lambda\in\Gr.
\] 
Fix $\mathbf h\in C^\infty(G,\g g)$. Our goal is to prove that 
$\mathbf h=\Phi(h)$ for some $h\in C^\infty(\Gsu,\FunC)$.
Lemma \ref{Dxvsdx}, equality \eqref{EQTXYDXTY}, and a standard glueing argument show that
it is enough to prove the following statement:

\textsf{Statement B.}
For every 
$g_0\in\Gsu_{\Lambda_0}$ 
there exists a unique smooth morphism 
\[
\htil\in C^\infty(\Vgzu,\FunC)
\] 
which satisfies
$
(\ldd_{x_1}\cdots\ldd_{x_n}\htil)_{\Lambda_0}(g_1)=(\mathbf h(x_1\cdots x_n))(g_1)
$
for every $n\geq 0$, every $x_1,\ldots, x_n\in\g g_\ood$,
and every $g_1\in \Vgzu_{\Lambda_0}$.

The proof of the uniqueness part of Statement B 
is similar to Steps 1--4 above. Next we give the proof of the existence part of Statement B.

Fix $g_0\in \Gsu_{\Lambda_0}$. For every $u\in\Usu_{\Lambda_0}$, every 
$n\geq 1$ and every $x_1,\ldots,x_n\in \g g_\ood$\, define
$\kgz_n(u)(x_1,\ldots,x_n)$ as follows:
\begin{align}
\label{tdmxa1ak}
\kgz_n(u)&(x_1,\ldots,x_n)\\:=
&\sum_{
\stackrel{S\in \mathscr P_n}{S=\{A_1,\ldots,A_k\}}
}
\sum_{i=1}^{c(S)}
(-1)^{\frac{n(n-1)}{2}}\mathbf h\big({\frD_{S,i}(u;x_{A_1})}
\cdots
{\frD_{S,i}(u;x_{A_k})}
\big)(g_0f_{\Lambda_0}(u)).
\notag
\end{align}
Set $\kgz_0(u):=\mathbf h(1_{U(\g g_\C)})(g_0f_{\Lambda_0}(u))$ for every $u\in\Usu_{\Lambda_0}$,
where $1_{U(\g g_\C)}$ denotes the identity element
of $U(\g g_\C)$. By Proposition
\ref{prpskeleton}
the family $\{\kgz_n\ :\ n\geq 0\}$ corresponds to 
a smooth morphism 
$
\kgz:\Usu\to \FunC.
$
Let $
\htil:\Vgzu\to\FunC$ be the unique smooth morphism which satisfies
\[
\kgz=\htil\circ l_{g_0}\circ f.
\]
Statement A and Lemma  \ref{hhnseries}
imply that
\begin{align}
\label{lcux1xn}
(-1)^{\frac{n(n-1)}{2}}&
\kgz_n(x_1,\ldots,x_n)
=
\big(
\ldd_{\bfc(u;x_1)}
\cdots
\ldd_{\bfc(u;x_n)}
h^{g_0}\big)_{\Lambda_0}(g_0f_{\Lambda_0}(u))\\
&+\sum_{
\substack{
S\in \mathscr P_n\\
S=\{A_1,\ldots,A_k\}\\k<n
}
}\ 
\sum_{i=1}^{c(S)}
\big(
\ldd_{\frD_{S,i}(u;x_{A_1})}
\cdots
\ldd_{\frD_{S,i}(u;x_{A_k})}
h^{g_0}\big)_{\Lambda_0}(g_0f_{\Lambda_0}(u))
\notag
\end{align}
and \eqref{tdmxa1ak}
implies that
\begin{align}
\label{txmda1ak2}
(-1)^{\frac{n(n-1)}{2}}&
\kgz_n(x_1,\ldots,x_n)
=\mathbf h\big(
\bfc(u;x_1)\cdots\bfc(u;x_n)
\big)(g_0f_{\Lambda_0}(u))\\
\notag
&+\sum_{
\substack{S\in \mathscr P_n\\
S=\{A_1,\ldots,A_k\}\\
k<n}
}
\sum_{i=1}^{c(S)}
\mathbf h\big({\frD_{S,i}(u;x_{A_1})}
\cdots
{\frD_{S,i}(u;x_{A_k})}
\big)(g_0f_{\Lambda_0}(u)).
\end{align}
Since the map 
$x\mapsto \bfc(u;x)$ is an invertible linear map 
(see Statement A), from \eqref{lcux1xn}, \eqref{txmda1ak2}, and 
Lemma
\ref{hhnseries}, 
and 
by induction on $n$ we can prove that 
\[
(\ldd_{x_1}\cdots\ldd_{x_n}\htil)_{\Lambda_0}
(g_0f_{\Lambda_0}(u))=(\mathbf h (x_1\cdots x_n ))(g_0f_{\Lambda_0}(u))
\]
for every $u\in\Usu_{\Lambda_0}$ and every $x_1,\ldots, x_n\in\g g_\ood$.
The proof of Statement B is now complete.
\end{proof}

\begin{remark}
Note that $C^\infty(Y,\g g)$ is an associative $\C$-superalgebra with the multiplication
\[
\big(\mathbf h\cdot \mathbf h'\big)(x)(g):=
\big(
\mathsf m\circ (\mathbf h\otimes \mathbf h')\circ\mathsf c(x)
\big)(g)
\]
where
$\mathsf m:C^\infty(Y,\C)\otimes C^\infty(Y,\C)\to C^\infty(Y,\C)$ denotes the standard pointwise multiplication and 
$\mathsf c:U(\g g_\C)\to U(\g g_\C)\otimes U(\g g_\C)$ 
denotes the standard  co-multiplication.
It can be shown that the map 
$\Phi$ of Theorem \ref{cinfandhom}
is an isomorphism of $\C$-superalgebras. We will not need this fact and therefore we omit its proof.
\end{remark}

\section{The GNS construction}
\label{sec-gns}
Theorem \ref{cinfandhom} allows us to substitute 
a Lie supergroup $\Gsu$ by its associated 
Harish--Chandra pair. Therefore in the rest of this article we will not need the functorial formalism of the previous sections and we can concentrate on Harish--Chandra pairs.
Throughout this section $(G,\g g)$ will 
denote a Harish--Chandra pair.

\subsection{Smooth and analytic unitary representations}

We recall the definition of smooth and analytic unitary representations of a Harish--Chandra pair (see \cite{varadarajan}, \cite{menesa}, and
\cite{nsfreshetsuper}).

\begin{definition}
\label{defi-smoothanalytic}
Let $(G,\g g)$  be a Harish--Chandra pair (resp., an analytic Harish--Chandra pair). A \emph{smooth unitary representation} 
(resp., an \emph{analytic unitary representation}) of $(G,\g g)$ is a triple $(\pi,\rho^\pi,\mathscr H)$ satisfying the following properties.
\begin{enumerate}
\item[(i)] $(\pi,\mathscr H)$ is a smooth (resp., analytic) unitary representation of $G$
on the $\mathbb Z_2$--graded Hilbert space $\mathscr H=\mathscr H_\eev\oplus\mathscr H_\ood$ such that for every $g\in G$, the operator
$\pi(g)$ preserves the $\mathbb Z_2$--grading.
\item[(ii)] $\rho^\pi:\g g\to\End_\C(\mathscr B)$ is a representation of the Lie superalgebra $\g g$, where
$\mathscr B=\mathscr H^\infty$ (resp., $\mathscr B=\mathscr H^\omega$).
\item[(iii)] For every $x\in\g g_\eev$, if $\dd\pi(x)$ denotes the infinitesimal generator of the one-parameter group $t\mapsto \pi(e^{tx})$, then
$\rho^\pi(x)=\dd\pi(x)\big|_\mathscr B$.
\item[(iv)] $e^{-\frac{\pi i}{4}}\rho^\pi(x)$ is a symmetric operator for every $x\in\g g_\ood$.
\item[(v)] Every element of the component group $G/G^\circ$ has a coset representative $g\in G$ such that $\pi(g)\rho^\pi(x)\pi(g)^{-1}=\rho^\pi(\Ad(g)x)$ for every $x\in\g g_\ood$.

\end{enumerate}

\end{definition}

\begin{lemma}
\label{lme-contofgnh}
Let $(G,\g g)$ be a Harish--Chandra pair, $(\pi,\rho^\pi,\mathscr H)$ be a smooth unitary representation of $(G,\g g)$, and $v\in\mathscr H^\infty$. Assume that $\g g$ is a 
Fr\' echet--Lie superalgebra. 
Then for every $n\geq 1$ the map 
\begin{equation*}
\g g^n\to \mathscr H\ ,\ 
(x_1,\ldots,x_n)\mapsto \rho^\pi(x_1)\cdots\rho^\pi(x_n)v
\end{equation*}
is continuous.

\end{lemma}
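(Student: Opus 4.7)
The plan is to prove the slightly stronger statement that for every $n\ge 0$, the map
\[
\Phi_n\colon \g g^n\times\mathscr H^\infty\to\mathscr H^\infty,\qquad (x_1,\ldots,x_n;u)\mapsto \rho^\pi(x_1)\cdots\rho^\pi(x_n)u,
\]
is jointly continuous, where $\mathscr H^\infty$ carries the Fr\'echet topology generated by the seminorms $p_{y_1,\ldots,y_k}(u):=\|d\pi(y_1)\cdots d\pi(y_k)u\|$ with $y_i\in\g g_\eev$ and $k\ge 0$. Specialising to the constant vector $u=v$ and composing with the continuous inclusion $\mathscr H^\infty\hookrightarrow\mathscr H$ then gives the lemma. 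The advantage of this reformulation is that, once $\Phi_1$ is known to be jointly continuous, the factorisation
\[
\Phi_n(x_1,\ldots,x_n;u)=\Phi_1\bigl(x_1,\,\Phi_{n-1}(x_2,\ldots,x_n;u)\bigr)
\]
makes the inductive step from $\Phi_{n-1}$ to $\Phi_n$ immediate.

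For the base case $n=1$, I would split $x=x_\eev+x_\ood$ along the topological direct sum $\g g=\g g_\eev\oplus\g g_\ood$. Joint continuity on $\g g_\eev\times\mathscr H^\infty$ follows from smoothness of the canonical action $G\times\mathscr H^\infty\to\mathscr H^\infty$: differentiating at the identity gives the derived action $(y,u)\mapsto d\pi(y)u=\rho^\pi(y)u$, which is jointly continuous. For $\g g_\ood$, condition (iv) of Definition~\ref{defi-smoothanalytic} translates into $\rho^\pi(x)^*=-i\rho^\pi(x)$ on $\mathscr H^\infty$, and combined with the Lie-superalgebra relation $\rho^\pi(x)^2=\tfrac12\rho^\pi([x,x])$ this yields the key identity
\[
\|\rho^\pi(x)u\|^2=-\tfrac{i}{2}\langle u,d\pi([x,x])u\rangle\qquad (x\in\g g_\ood,\ u\in\mathscr H^\infty).
\]
Continuity of the superbracket $\g g\times\g g\to\g g$ together with the joint continuity of the even action makes the right-hand side continuous in $(x,u)$ and vanishing at $x=0$, so $\Phi_1$ is continuous with target $\mathscr H$. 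To upgrade the target to $\mathscr H^\infty$, I apply the same identity with $u$ replaced by $d\pi(y_1)\cdots d\pi(y_k)u$ after using the supercommutator identity $d\pi(y)\rho^\pi(x)=\rho^\pi(x)d\pi(y)-\rho^\pi([x,y])$ to move $\rho^\pi(x)$ past each even derivative. The seminorm $p_{y_1,\ldots,y_k}(\rho^\pi(x)u)$ then becomes a finite combination of terms of the form $\sqrt{p_\alpha(u)\,p_\beta(u)}$ plus strictly lower-order remainders involving $\rho^\pi$ of iterated super-brackets, each depending continuously on $(x,y_1,\ldots,y_k,u)$.

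The main obstacle is precisely this final step --- promoting joint continuity of $\Phi_1$ from target $\mathscr H$ to target $\mathscr H^\infty$. The bookkeeping of supersigns and of the lower-order bracket terms generated by iterating the supercommutator is delicate, but the continuity of $[\cdot,\cdot]\colon\g g\times\g g\to\g g$ and of the derived action on $\g g_\eev\times\mathscr H^\infty$ reduce it to routine recursive estimates involving only finitely many seminorms at each stage.
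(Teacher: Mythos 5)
Your treatment of the odd directions is exactly the mechanism the paper uses: symmetry of $e^{-\frac{\pi i}{4}}\rho^\pi(x)$ together with $\rho^\pi(x)^2=\frac{1}{2}\rho^\pi([x,x])$ gives $\|\rho^\pi(x)w\|^2\le\frac{1}{2}\|w\|\,\|\rho^\pi([x,x])w\|$, reducing the odd case to the even one plus continuity of the superbracket. The genuine gap is at the foundation of your even case: you assert that the action $G\times\mathscr H^\infty\to\mathscr H^\infty$ is smooth and obtain joint continuity of $(y,u)\mapsto\dd\pi(y)u$ by differentiating it. In this paper, a smooth unitary representation of $G$ (Definition \ref{defi-smoothanalytic}) only provides the dense subspace $\mathscr H^\infty$ and smooth orbit maps of its elements; smoothness of the induced action on $\mathscr H^\infty$ with its natural topology is a nontrivial theorem of Neeb \cite{neebdiff}, not a definition, and it is neither proved nor invoked anywhere in the paper, which deliberately works around it. As written, your base case (and hence the whole induction) rests on an unsupported claim; it can be repaired by an explicit citation of that theorem for the Fr\'echet--Lie group $G$. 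A smaller inaccuracy: after commuting $\rho^\pi(x)$ past $\dd\pi(y_1)\cdots\dd\pi(y_k)$, the remainder terms involve $\rho^\pi$ of iterated brackets that vary with $x$, so they are not controlled by fixed seminorms $\sqrt{p_\alpha(u)p_\beta(u)}$ of $u$ alone; one must again invoke the joint continuity of the even action together with continuity of the bracket.

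For comparison, the paper never needs joint continuity in the vector variable and never uses the smooth action on $\mathscr H^\infty$: it keeps $v$ fixed, inducts on $n$, and proves continuity of $(x,x_1,\ldots,x_{n-1})\mapsto\dd\pi(x)\rho^\pi(x_1)\cdots\rho^\pi(x_{n-1})v$ by exhibiting it as a pointwise limit of the continuous difference-quotient maps $f_t$; since $\g g$ is Fr\'echet, hence Baire, the limit has a point of continuity by \cite[Ch. IX, \S 5, Ex. 22(a)]{bourbaki}, and multilinearity upgrades this to continuity everywhere via \cite[Lemma 4.8]{nsfreshetsuper}. The odd variable is then handled, as in your sketch, by the Cauchy--Schwarz estimate after reducing to continuity at the origin. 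So your route buys a stronger conclusion (joint continuity into $\mathscr H^\infty$) at the cost of importing a substantial external theorem plus extra commutator bookkeeping, whereas the paper's Baire-category argument is self-contained and proves only what the lemma actually requires.
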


\begin{proof}
The proof is by induction on $n$. For $n=0$ there is nothing to prove. Let $n\geq 1$. First we prove that the map 
\begin{equation}
\label{mapg0ggg}
\g g_\eev\times\g g^{n-1}\to \mathscr H\ ,\ (x,x_1\ldots,x_{n-1})\mapsto
\rho^\pi(x)\rho^\pi(x_1)\cdots\rho^\pi(x_{n-1})v
\end{equation}
is continuous. 
For every nonzero $t\in\R $ define a map 
$
f_t:\g g_\eev\times \g g^{n-1}\to \mathscr H
$
by
\[ 
f_t(x,x_1,\ldots,x_{n-1}):=\frac{1}{t}
\left(
\pi(e^{tx})\rho^\pi(x_1)\cdots\rho^\pi(x_{n-1})
-\rho^\pi(x_1)\cdots\rho^\pi(x_{n-1})\right)
v.
\]
The maps $f_t$ are continuous from a Baire space into a metric space. Moreover,  
\[
\lim_{t\to 0}f_{t}(x,x_1,\ldots,x_{n-1})= \rho^\pi(x)\rho^\pi(x_1)\cdots\rho^\pi(x_{n-1})v.
\]
It follows from \cite[Ch. IX, \S 5, Ex. 22(a)]{bourbaki} that the set of 
discontinuity points of the map \eqref{mapg0ggg} is of first category, and therefore its set of continuity points is nonempty. Since the map
\eqref{mapg0ggg} is $n$-linear, \cite[Lemma 4.8]{nsfreshetsuper} implies that 
it is continuous.

To complete the proof it is enough to show that the map
\begin{equation}
\label{g1gn-1hrhop}
\g g_\ood\times\g g^{n-1}\to\mathscr H\ ,\ 
(x,x_1,\ldots,x_{n_1})\mapsto 
\rho^\pi(x)\rho^\pi(x_1)\cdots\rho^\pi(x_{n-1})v
\end{equation}
is continuous at $(0,\ldots,0)\in\g g_\ood\times \g g^{n-1}$.
If $(x,x_1,\ldots,x_{n-1})\in \g g_\ood\times\g g^{n-1}$ then
\begin{align*}
\|\rho^\pi(x)&\rho^\pi(x_1)\cdots\rho^\pi(x_{n-1})v\|^2\\
&=
\langle
\rho^\pi(x)\rho^\pi(x_1)\cdots\rho^\pi(x_{n-1})v,
\rho^\pi(x)\rho^\pi(x_1)\cdots\rho^\pi(x_{n-1})v
\rangle\\
&=|\langle
\rho^\pi(x_1)\cdots\rho^\pi(x_{n-1})v,
\rho^\pi(x)^2\rho^\pi(x_1)\cdots\rho^\pi(x_{n-1})v
\rangle|\\
&\leq 
\frac{1}{2}
\|\rho^\pi(x_1)\cdots\rho^\pi(x_{n-1})v\|
\cdot
\|\rho^\pi([x,x])\rho^\pi(x_1)\cdots\rho^\pi(x_{n-1})v\|.
\end{align*}
Therefore continuity of \eqref{g1gn-1hrhop} follows from the 
the induction hypothesis, continuity of \eqref{mapg0ggg}, and continuity of the superbracket of $\g g$.
\end{proof}

\begin{lemma}
\label{smthvecbcmsan}
Let $(G,\g g)$ be an analytic Harish--Chandra pair. Assume that 
$\g g$ is a Fr\' echet--Lie superalgebra.
Let 
$(\pi,\mathscr H,\rho^\pi)$ be a smooth unitary representation of $(G,\g g)$. If $v\in\mathscr H^\omega$ then $\rho^\pi(x)v\in\mathscr H^\omega$ for every $x\in \g g$. 
\end{lemma}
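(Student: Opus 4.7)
The plan is to argue by cases on the parity of $x$. For $x \in \g g_\eev$ the statement $\rho^\pi(x)v = d\pi(x)v \in \mathscr H^\omega$ is the classical invariance of $\mathscr H^\omega$ under the infinitesimal operators of an analytic unitary representation, which follows directly from analyticity of the orbit map $g \mapsto \pi(g)v$ together with translation-invariance.

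For $x \in \g g_\ood$ the argument rests on three ingredients. The first is the intertwining relation
\[
[d\pi(y), \rho^\pi(x)] = \rho^\pi([y,x]), \qquad y \in \g g_\eev,
\]
valid on $\mathscr H^\infty$ by axiom (iv) of Definition \ref{def-blsupergroup} and the fact that $\rho^\pi$ is a representation of the Lie superalgebra. Iterating it gives the operator identity
\[
d\pi(y)^n\rho^\pi(x) = \sum_{k=0}^n \binom{n}{k}\rho^\pi\bigl((\ad y)^{n-k}x\bigr) d\pi(y)^k
\]
on $\mathscr H^\infty$. The second ingredient is the Lie-super identity $\rho^\pi(x)^2 = \tfrac{1}{2}\rho^\pi([x,x]) = \tfrac{1}{2}d\pi([x,x])$ (valid because $[x,x] \in \g g_\eev$), which combined with axiom (iv) of Definition \ref{defi-smoothanalytic} yields the fundamental bound
\[
\|\rho^\pi(z)w\|^2 = \tfrac{1}{2}\bigl|\langle w, d\pi([z,z])w\rangle\bigr| \leq \tfrac{1}{2}\|w\|\cdot\|d\pi([z,z])w\|, \qquad z \in \g g_\ood,\ w \in \mathscr H^\infty.
\]
The third is the quantitative form of analyticity of $v$: from the analyticity of the orbit map $g \mapsto \pi(g)v$, via its Taylor expansion and polarization, one obtains constants $C, r > 0$ and a continuous seminorm $\nu$ on $\g g_\eev$ such that
\[
\|d\pi(y_1)\cdots d\pi(y_m)v\| \leq C\cdot m!\cdot r^m\cdot\prod_{i=1}^m\nu(y_i)
\]
for all $m \geq 0$ and all $y_i \in \g g_\eev$.

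Combining these three ingredients, and using continuity of the super-bracket $\g g\times\g g\to\g g$ and of the adjoint action of $\g g_\eev$ on $\g g_\ood$, each term of the iterated commutator is bounded by a quantity of order $(k+1)!\cdot R^n\cdot\nu(y)^n$, so that summation over $k$ yields an estimate of the shape
\[
\|d\pi(y)^n\rho^\pi(x)v\| \leq C'\cdot n!\cdot (n+1)\cdot R^n\cdot\nu(y)^n
\]
for suitable constants depending on $x$. Consequently $\sum_{n\geq 0} \tfrac{1}{n!}\|d\pi(y)^n\rho^\pi(x)v\|$ converges for $y$ in a neighborhood of $0 \in \g g_\eev$, which establishes analyticity of the orbit map $g \mapsto \pi(g)\rho^\pi(x)v$ at the identity and hence $\rho^\pi(x)v \in \mathscr H^\omega$.

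The main obstacle is the third ingredient: extracting, from the abstract analyticity of $v$, quantitative seminorm bounds of the above form in the Fréchet setting, where $\mathscr H^\omega$ does not carry a natural Banach-space topology. These bounds must then be propagated through the iterated commutator without losing the $(n!)^{-1}$-summability that is the content of analyticity. The methods developed in \cite{neebanalytic} and \cite{menesa} for analytic vectors of Fréchet and Banach Lie (super)groups supply the relevant framework.
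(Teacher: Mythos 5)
Your outline for $x\in\g g_\ood$ contains two genuine gaps, both concentrated in what you yourself call the ``main obstacle''. First, the quantitative bound $\|\dd\pi(y_1)\cdots \dd\pi(y_m)v\|\leq C\,m!\,r^m\prod_i\nu(y_i)$ for a \emph{single} continuous seminorm $\nu$ and for \emph{ordered} products of non-commuting operators does not follow from $v\in\mathscr H^\omega$ in the Fr\'echet setting: analyticity of the orbit map only controls the symmetrized Taylor coefficients pointwise, and re-ordering produces iterated brackets. Since a Fr\'echet--Lie superalgebra has no submultiplicative seminorm, estimating $(\ad y)^{n-k}x$ and $[(\ad y)^{j}x,(\ad y)^{j}x]$ forces you up the seminorm scale, and the geometric control needed to sum the $\binom{n}{k}$ terms is exactly what is unavailable; the references you invoke either work in the Banach case (\cite{menesa}) or avoid such estimates altogether (\cite{neebanalytic}). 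Second, even granting the estimate, convergence of $\sum_n\frac{1}{n!}\|\dd\pi(y)^n\rho^\pi(x)v\|$ for $y$ near $0$ does not yield analyticity of the orbit map $g\mapsto\pi(g)\rho^\pi(x)v$ on $G$: the lemma only assumes an analytic Harish--Chandra pair, so $G$ need not be BCH or locally exponential, and without an analytic exponential chart the exponential-series criterion is not equivalent to membership in $\mathscr H^\omega$ (that equivalence is only exploited later, in Theorem \ref{thm-cinteg}, under a BCH hypothesis).

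The paper's proof sidesteps all growth estimates. After reducing to $x\in\g g_\ood$ and setting $w:=\rho^\pi(x)v$, it invokes the criterion of \cite[Thm 5.2]{neebanalytic}: $w\in\mathscr H^\omega$ once the matrix coefficient $g\mapsto\langle\pi(g)w,w\rangle$ is analytic. Writing
\[
\langle\pi(g)w,w\rangle=\sqrt{-1}\,\langle\pi(g)v,\rho^\pi(\Ad(g)x)w\rangle,
\]
analyticity follows from three soft facts: $g\mapsto\pi(g)v$ is analytic because $v\in\mathscr H^\omega$; $g\mapsto\Ad(g)x$ is analytic because the pair is analytic; and $z\mapsto\rho^\pi(z)w$ is continuous (hence analytic, being linear) by Lemma \ref{lme-contofgnh}, which is where the Fr\'echet hypothesis and $w\in\mathscr H^\infty$ enter via a Baire category argument. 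If you want to salvage your approach, you would essentially have to restrict to Banach or BCH groups; in the stated generality the matrix-coefficient route appears unavoidable.
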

\begin{proof}
The argument appears in the proof of \cite[Thm 6.13]{nsfreshetsuper}, but for the reader's convenience we explain the details.
Since $\mathscr H^\omega$ is $\g g_\eev$--invariant, 
it suffices to prove that $\rho^\pi(x)v\in\mathscr H^\omega$ for every 
$x\in \g g_\ood$. Set $w:=\rho^\pi(x)v$.
By \cite[Thm 5.2]{neebanalytic} it suffices to prove that
the map 
\begin{equation}
\label{gtocgpiww}
G\to\C\ ,\ g\mapsto\langle \pi(g)w,w\rangle
\end{equation}
is analytic. Note that 
$
\langle \pi(g)w,w\rangle
=\sqrt{-1}\langle\pi(g)v,\rho^\pi(g\cdot x)w\rangle
$. Since $w\in\mathscr H^\infty$, Lemma \ref{lme-contofgnh} implies that the linear map $z\mapsto \rho^\pi(z)w$ is continuous, and therefore the map
$g\mapsto \rho^\pi(g\cdot x)w$ is analytic. Since the map 
$g\mapsto \pi(g)v$ is analytic, the map \eqref{gtocgpiww} is also analytic.
\end{proof}

\subsection{The involutive monoid associated to $(G,\g g)$}

\label{sec-invlt}

The anti-linear map \[
\g g_\C\to \g g_\C\ ,\ x\mapsto x^*\] defined by 
\[
x^*:=
\begin{cases}
-x&\text{ if }x\in \g g_\eev,\\
-\sqrt{-1}\,x&\text{ if }x\in\g g_\ood.
\end{cases}
\]
is an anti-automorphism. It 
 extends to an anti-linear anti-automorphism 
\begin{equation}
\label{dfostarrr}
U(\g g_\C)\to  U(\g g_\C)\ ,\ 
D\mapsto D^*
\end{equation}
in a canonical way.
Consider the monoid $\Smi$ with underlying set $G\times U(\g g_\C)$ and multiplication
\[
(g_1,D_1)(g_2,D_2)=(g_1g_2,(g_2^{-1}\cdot D_1)D_2)
\]
where $g\cdot D$ denotes the adjoint action of $g\in G$ on 
$D\in U(\g g_\C)$.
The neutral element of $\Smi$ is $1_\Smi:=(\yek_G,1_{U(\g g_\C)})$.
The map 
\[
\Smi\to \Smi\,,\,s\mapsto s^*
\]
defined by 
\[
(g,D)^*:=(g^{-1},g\cdot(D^*))
\]
is an involution of $\Smi$.

Recall that $U(\g g_\C)$ is an associative superalgebra. 
An element $(g,D)\in \Smi$ is called \emph{odd} 
(resp.  \emph{even}) if $D$ is an odd (resp. even) element of $U(\g g_\C)$. 

\subsection{Smooth and analytic superfunctions on Harish--Chandra pairs}
Similar to Section \ref{thealgrevis}, 
let $\higuc$ (resp., $\higuo$)
denote the set of
$\C$--linear maps 
\[
f:U(\g g_\C)\to C^\infty(G,\C)
\] 
which satisfy the following two properties.
\begin{itemize}
\item[(i)] $f(xD)(g)=\lddd_x(f(D))(g)$ for every $x\in\g g_\eev$, every $D\in U(\g g_\C)$, and every $g\in G$.
\item[(ii)] For every $n\geq 0$ the map
\begin{equation}
f^{[n]}:\g g^n\times G\to \C
\ ,\ f^{[n]}(x_1,\ldots,x_n,g):= 
\big(f(x_1\cdots x_n)\big)(g)
\end{equation}
is smooth (resp., analytic).
\end{itemize}
\begin{remark}
Because of Theorem \ref{cinfandhom},
the spaces 
$\higuc$ and $\higuo$ deserve to be called the 
spaces
of smooth and analytic superfunctions on the Harish--Chandra pair $(G,\g g)$. 
\end{remark}

For every $f\in\higuc$ we set
\[
\Sg{f}:\Smi\to \C\ ,\  
\Sg{f}(g,D):=f(D)(g)
\] 
for every 
$g\in G$ and every $D\in U(\g g_\C)$.

\begin{lemma}
\label{lem-chainruldif}
Let $f\in\higuc$, $(g,D)\in \Smi$, and $x\in\g g_\eev$. 
Then
\[
\lim_{t\to 0}
\frac{1}{t}\left(\Sg{f}(ge^{tx},e^{-tx}\cdot D)
-\Sg{f}(g,D)\right)=\Sg{f}(g,Dx).
\] 
\end{lemma}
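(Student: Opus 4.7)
The plan is to reduce to a monomial $D$ and then apply the chain rule to the smooth multilinear map $f^{[m]}$. By $\C$-linearity in $D$ of both sides of the asserted identity, it suffices to prove the identity when $D=y_1\cdots y_m$ for homogeneous $y_1,\ldots,y_m\in\g g$. Since the adjoint action of $G$ on $U(\g g_\C)$ is by algebra automorphisms, one has $e^{-tx}\cdot D = \Ad(e^{-tx})y_1 \cdots \Ad(e^{-tx})y_m$, so
\[
\Sg{f}(ge^{tx},\,e^{-tx}\cdot D) = f^{[m]}\bigl(\Ad(e^{-tx})y_1,\ldots,\Ad(e^{-tx})y_m,\,ge^{tx}\bigr).
\]

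I would then differentiate the right-hand side at $t=0$ via the chain rule. Smoothness of $f^{[m]}$ is guaranteed by the definition of $\higuc$, and $\R$-multilinearity of $f^{[m]}$ in its $\g g$-arguments follows from $\C$-linearity of $f$ together with multilinearity of the product in $U(\g g_\C)$. Definition~\ref{def-blsupergroup}(iv) supplies $\frac{d}{dt}\big|_{t=0}\Ad(e^{-tx})y_i = -[x,y_i]$, and multilinearity identifies the $i$-th partial derivative of $f^{[m]}$ with substitution into the $i$-th slot, so the contribution of the $i$-th $\g g$-slot is $-f\bigl(y_1\cdots[x,y_i]\cdots y_m\bigr)(g)$. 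The contribution of the $G$-slot is $\lddd_x\bigl(f(D)\bigr)(g)$, which by property~(i) of the definition of $\higuc$ equals $f(xD)(g)$. Hence
\[
\frac{d}{dt}\bigg|_{t=0}\Sg{f}(ge^{tx},\,e^{-tx}\cdot D) = -\sum_{i=1}^m f\bigl(y_1\cdots[x,y_i]\cdots y_m\bigr)(g) + f(xD)(g).
\]

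The last step is to collapse the sum using that $\ad(x)=[x,\,\cdot\,]$ is a genuine derivation of the associative algebra $U(\g g_\C)$ (because $x$ is even, so the super-commutator reduces to the ordinary commutator). This yields $\sum_{i=1}^m y_1\cdots[x,y_i]\cdots y_m = [x,\,y_1\cdots y_m] = xD - Dx$, whence
\[
-f(xD-Dx)(g) + f(xD)(g) = f(Dx)(g) = \Sg{f}(g,Dx),
\]
as claimed. The only point requiring any care is the routine verification that smoothness and multilinearity of $f^{[m]}$ permit identification of partial derivatives with substitution in the $\g g$-slots; no tool beyond the chain rule and the defining properties of $\higuc$ is needed.
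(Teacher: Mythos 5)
Your proposal is correct and follows essentially the same route as the paper: reduce to a monomial $D$, differentiate $t\mapsto f(e^{-tx}\cdot D)(ge^{tx})$ by the chain rule, use that $\ad(x)$ acts as the ordinary commutator (hence a derivation) since $x\in\g g_\eev$, and convert the $G$-direction term via property (i) of $\higuc$ to get $f(-xD+Dx)(g)+f(xD)(g)=f(Dx)(g)$. Your explicit passage through the smooth multilinear map $f^{[m]}$ merely spells out what the paper's terse ``by the Chain Rule'' step implicitly relies on, so there is no substantive difference.
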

\begin{proof}
By linearity of the map $D\mapsto f(D)$ we can assume that $D$ is a monomial of degree $n$. By the Chain Rule we have
\begin{align*}
\lim_{t\to 0}
\frac{1}{t}
\big(\Sg{f}(ge^{tx}&,e^{-tx}\cdot D)
 - \Sg{f}(g,D)\big)\\
&=\frac{d}{dt}\Big|_{t=0}f(e^{-tx}\cdot D)(ge^{tx})
=f(-xD+Dx)(g)+\lddd_x\big(f(D)\big)(g)\\
&=f(-xD+Dx)(g)+f(xD)(g)=f(Dx)(g)=\Sg{f}(g,Dx).
\qedhere
\end{align*}
\end{proof}
Observe that $\Smi$ acts on $\C^\Smi$ (the space of complex-valued functions on $\Smi$) 
by right translation, that is,
$(s\cdot \psi)(t):=\psi(ts)$ for every 
$s,t\in \Smi$ and every $\psi\in \C^\Smi$.
The next lemma shows that $C^\infty(G,\g g)$ is an invariant subspace of $\C^\Smi$ under this action.
\begin{lemma}
\label{lem-fugchb}
Let $f\in \higuc$ and $(g_\circ,D_\circ)\in \Smi$. Then the map 
\[
h:U(\g g_\C)\to C^\infty(G,\C)\ ,\ 
h(D)(g):=f\big((g_\circ^{-1}\cdot D) D_\circ\big)(gg_\circ)
\]
belongs to $\higuc$. 
\end{lemma}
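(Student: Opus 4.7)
The plan is to verify the two defining properties of $\higuc$ for $h$: $\C$-linearity together with the intertwining property with left-invariant derivatives, and smoothness of each map $h^{[n]}$. The $\C$-linearity is immediate since $D \mapsto g_\circ^{-1} \cdot D$ is linear, $D' \mapsto D' D_\circ$ is linear, $f$ is linear, and evaluation at $gg_\circ$ is linear.

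For the intertwining property, fix $x \in \g g_\eev$, $D \in U(\g g_\C)$, and $g \in G$, and set $y := \Ad(g_\circ^{-1})(x) \in \g g_\eev$ (note that $\Ad(g_\circ^{-1})$ preserves the $\Z_2$-grading). Since $\Ad(g_\circ^{-1})$ is an algebra automorphism of $U(\g g_\C)$, we have $g_\circ^{-1}\cdot(xD) = y\,(g_\circ^{-1}\cdot D)$, hence
\[
h(xD)(g) = f\bigl(y\,(g_\circ^{-1}\cdot D)\,D_\circ\bigr)(gg_\circ)
=\lddd_y\bigl(f((g_\circ^{-1}\cdot D)D_\circ)\bigr)(gg_\circ)
\]
by property~(i) applied to $f$. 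On the other hand, the conjugation identity $g_\circ^{-1}e^{tx}g_\circ = e^{ty}$ yields $ge^{tx}g_\circ = gg_\circ e^{ty}$, so that
\[
\lddd_x(h(D))(g) = \lim_{t\to 0}\tfrac{1}{t}\bigl(f((g_\circ^{-1}\cdot D)D_\circ)(gg_\circ e^{ty}) - f((g_\circ^{-1}\cdot D)D_\circ)(gg_\circ)\bigr)
= \lddd_y\bigl(f((g_\circ^{-1}\cdot D)D_\circ)\bigr)(gg_\circ),
\]
which matches the expression above.

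For the smoothness of $h^{[n]}$, write $D_\circ$ as a finite $\C$-linear combination of monomials $z_{j,1}\cdots z_{j,m_j}$ with $z_{j,k}\in\g g$. Setting $y_i := \Ad(g_\circ^{-1})(x_i)$ and using that $\Ad(g_\circ^{-1})$ is an algebra automorphism, we get
\[
h^{[n]}(x_1,\dots,x_n,g)
=\sum_j c_j\, f^{[n+m_j]}\bigl(y_1,\dots,y_n,z_{j,1},\dots,z_{j,m_j},\,gg_\circ\bigr).
\]
Each $f^{[n+m_j]}$ is smooth by hypothesis, the map $x\mapsto \Ad(g_\circ^{-1})(x)$ is continuous linear hence smooth, and $g\mapsto gg_\circ$ is smooth on $G$, so $h^{[n]}$ is smooth as a finite sum of compositions of smooth maps.

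There is no serious obstacle here; the only point that requires care is making sure that $\Ad(g_\circ^{-1})$ truly acts as an algebra automorphism of $U(\g g_\C)$ preserving parity (so that $y\in\g g_\eev$ when $x\in\g g_\eev$, and so that $g_\circ^{-1}\cdot(xD) = y\,(g_\circ^{-1}\cdot D)$), which is standard for Harish--Chandra pairs as in Definition~\ref{def-blsupergroup}.
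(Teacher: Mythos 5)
Your proof is correct and takes essentially the same route as the paper's: the heart of both arguments is the conjugation identity $ge^{tx}g_\circ=gg_\circ e^{t\Ad(g_\circ^{-1})x}$ combined with property (i) of $f$ applied to the element $\Ad(g_\circ^{-1})x\in\g g_\eev$, which is exactly the paper's chain of equalities. The smoothness of $h^{[n]}$, which the paper dismisses as the routine part, is verified by the same straightforward composition argument you spell out.
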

\begin{proof}
The only nontrivial statement is that 
$\lddd_x (h(D))(g)=h(xD)(g)$ for 
$x\in\g g_\eev$ and $D\in U(\g g_\C)$. This can 
be checked as follows:
\begin{align*}
\lddd_x(h(D))(g)
&
=\lim_{t\to 0}\frac{1}{t}\big(h(D)(ge^{tx})-h(D)\big)\\
&=
\lim_{t\to 0}
\frac{1}{t}
\Big(
f\big((g_\circ^{-1}\cdot D)D_\circ\big)(ge^{tx}g_\circ)
-
f\big((g_\circ^{-1}\cdot D)D_\circ\big)(gg_\circ)
\Big)\\
&=
\lim_{t\to 0}
\frac{1}{t}
\Big(
f\big((g_\circ^{-1}\cdot D)D_\circ\big)
(gg_\circ e^{t(g_\circ^{-1}\cdot x)})
-
f\big((g_\circ^{-1}\cdot D)D_\circ\big)(gg_\circ)
\Big)\\
&=\lddd_{g_\circ^{-1}\cdot x} \big(f((g_\circ^{-1}\cdot D)D_\circ)\big)
(gg_\circ)=f\big((g_\circ^{-1}\cdot(xD))D_\circ\big)(gg_\circ)\\
&=h(xD)(g).
\qedhere
\end{align*}

\end{proof}

\subsection{Positive definite smooth superfunctions} We can now define positive definite smooth superfunctions on a Harish--Chandra pair using the involutive monoid 
$\Smi$ introduced in Section \ref{sec-invlt}.
\begin{definition}
\label{def-pdfunc}
An $f\in\higuc$ is called \emph{even} if $\Sg{f}(g,D)=0$ for every odd element $(g,D)\in \Smi$. An $f\in\higuc$ is called 
\emph{positive definite} if $f$ is even and $\Sg{f}$ is a positive definite function on $\Smi$, i.e.,
\[
\sum_{1\leq i,j\leq n}\overline{c_i} c_j 
\Sg{f}(s_i^*s_j)\geq 0
\,\text{ for }n\geq 1,\, 
c_1,\ldots,c_n\in\C,\, 
\text{ and }s_1,\ldots,s_n\in \Smi.
\]

\end{definition}

\subsection{Matrix coefficients of unitary representations} For smooth and analytic unitary representations of a Harish--Chandra pair $(G,\g g)$, the matrix coefficients are defined as follows.

\begin{definition}
Let $(\pi,\mathscr H,\rho^\pi)$ be a (smooth or analytic) unitary representation of $(G,\g g)$. For every 
$v,w\in\mathscr H^\infty$ the function
\[
\varphi_{v,w}:\Smi\to \C\,,\,\varphi_{v,w}(g,D):= \langle\pi(g)\rho^\pi(D)v,w\rangle
\]
is called the \emph{matrix coefficient} of the vectors $v,w$.
\end{definition}

\begin{proposition}
\label{mctoposdef}
Let $(G,\g g)$ be a Harish--Chandra pair such that $\g g$ is a 
Fr\' echet--Lie superalgebra. 
\begin{itemize}
\item[(i)] Let $(\pi,\rho^\pi,\mathscr H)$ be a 
smooth unitary representation of $(G,\g g)$ and
$v,w\in\mathscr H^\infty$ be
homogeneous vectors such that $|v|=|w|$. 
Then there exists an even $f\in\higuc$ 
such that $\Sg{f}=\varphi_{v,w}$.
\item[(ii)] Assume that $(G,\g g)$ is an analytic Harish--Chandra pair. Let $(\pi,\rho^\pi,\mathscr H)$ 
be an analytic unitary representation of $(G,\g g)$ and  $v,w\in\mathscr H^\omega$ be
homogeneous vectors such that $|v|=|w|$. 
Then there exists an even $f\in\higuo$ 
such that 
$\Sg{f}=\varphi_{v,w}$.

\end{itemize}
\end{proposition}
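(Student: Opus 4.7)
The plan is to take the obvious candidate
\[
f: U(\g g_\C) \to C^\infty(G,\C) \quad \text{(resp., } C^\omega(G,\C)\text{)}, \qquad f(D)(g) := \langle \pi(g)\rho^\pi(D)v, w\rangle,
\]
so that $\Sg{f} = \varphi_{v,w}$ by construction, and to verify the defining axioms of $\higuc$ (resp., $\higuo$) together with evenness. Linearity of $D \mapsto f(D)$ is immediate. Since $\mathscr H^\infty$ (resp., $\mathscr H^\omega$, by Lemma \ref{smthvecbcmsan}) is $\g g$-stable, the vector $\rho^\pi(D)v$ lies in $\mathscr H^\infty$ (resp., $\mathscr H^\omega$), so $g \mapsto f(D)(g)$ is a matrix coefficient of a smooth (resp., analytic) vector and is therefore smooth (resp., analytic) on $G$. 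The relation $f(xD)(g) = \lddd_x(f(D))(g)$ for $x \in \g g_\eev$ follows by differentiating $t \mapsto \langle \pi(ge^{tx})\rho^\pi(D)v, w\rangle$ at $t = 0$ and invoking Definition \ref{defi-smoothanalytic}(iii) to identify $\dd\pi(x)\rho^\pi(D)v = \rho^\pi(x)\rho^\pi(D)v = \rho^\pi(xD)v$. Evenness then holds by parity: for $D$ odd, $\rho^\pi(D)v$ has parity $|v| + \ood = |w| + \ood$, while $\pi(g)$ preserves the $\mathbb Z_2$-grading of $\mathscr H$, so $\pi(g)\rho^\pi(D)v$ is orthogonal to $w$.

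The substantive step is showing that
\[
f^{[n]}(x_1,\ldots,x_n,g) = \langle \pi(g)T_n(x_1,\ldots,x_n), w\rangle, \qquad T_n(x_1,\ldots,x_n) := \rho^\pi(x_1)\cdots\rho^\pi(x_n)v,
\]
is smooth (resp., analytic). I would first note that $T_n : \g g^n \to \mathscr H$ is $n$-linear and, by Lemma \ref{lme-contofgnh}, continuous, and that $g \mapsto \pi(g)T_n(x)$ is smooth into $\mathscr H$ for each fixed $x$ since $T_n(x) \in \mathscr H^\infty$. Joint continuity of $f^{[n]}$ then follows from the splitting
\[
f^{[n]}(x,g) - f^{[n]}(x^\circ,g^\circ) = \langle \pi(g)(T_n(x)-T_n(x^\circ)), w\rangle + \langle (\pi(g)-\pi(g^\circ))T_n(x^\circ), w\rangle.
\]
A direct calculation of the total differential of $f^{[n]}$ at $(x,g)$ in a direction $(y_1,\ldots,y_n,y_0)$ with $y_0 \in \g g_\eev$ produces the finite sum $\sum_{i=1}^n f^{[n]}(x_1,\ldots,y_i,\ldots,x_n,g) + f^{[n+1]}(y_0,x_1,\ldots,x_n,g)$, where the first terms come from multilinearity of $T_n$ and the last term from the left invariant derivative $\lddd_{y_0}$. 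Since this is again a finite sum of continuous $f^{[m]}$'s, the same argument gives $C^1$, and iterating yields smoothness to all orders.

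For part (ii), the analytic analogue uses that $T_n(x) \in \mathscr H^\omega$ by Lemma \ref{smthvecbcmsan}. Fixing a base point $(x_1^\circ,\ldots,x_n^\circ,g^\circ)$, I would expand $T_n(x_1^\circ + y_1,\ldots,x_n^\circ + y_n)$ by multilinearity into a finite sum of polynomial terms in $y_1,\ldots,y_n$ with values in $\mathscr H^\omega$, and expand $\pi(g^\circ\exp z)u = \pi(g^\circ)e^{\dd\pi(z)}u$ on each such analytic vector as a convergent power series in $z \in \g g_\eev$. Taking the pairing with $w$ yields an absolutely convergent multi-variable power series representing $f^{[n]}$ near the base point, hence joint analyticity. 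The main obstacle I anticipate is precisely this joint behavior across the $\g g^n$ and $G$ variables simultaneously, but the two key ingredients—continuity of the multilinear map $T_n$ and $\g g$-stability of $\mathscr H^\omega$—are exactly what Lemmas \ref{lme-contofgnh} and \ref{smthvecbcmsan} supply.
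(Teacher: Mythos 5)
Your candidate $f$, the check of the relation $f(xD)=\lddd_x f(D)$, and the parity argument for evenness are all correct and coincide with what the paper treats as routine. For the smooth case your strategy (joint continuity via the splitting, then iterated directional derivatives that reproduce finite sums of the maps $f^{[m]}$) can be made to work, but note two things: the derivative in the $G$--direction equals $f^{[n+1]}(y_0,x_1,\ldots,x_n,g)$ only when you differentiate along $t\mapsto ge^{ty_0}$, so in an actual chart a smooth correction factor appears and has to be carried through the induction; and the whole induction is unnecessary. The paper's proof uses unitarity to decouple the variables: $\varphi_{n,v,w}(x_1,\ldots,x_n,g)=\langle \rho^\pi(x_1\cdots x_n)v,\pi(g^{-1})w\rangle$, which exhibits $f^{[n]}$ as the continuous pairing of the continuous $n$-linear (hence smooth) map $(x_1,\ldots,x_n)\mapsto\rho^\pi(x_1)\cdots\rho^\pi(x_n)v$ from Lemma \ref{lme-contofgnh} with the smooth orbit map $g\mapsto\pi(g^{-1})w$; smoothness is then immediate, with no computation of higher differentials.

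The genuine gap is in your part (ii). You expand $\pi(g^\circ\exp z)u=\pi(g^\circ)\sum_{n\ge 0}\frac{1}{n!}\dd\pi(z)^n u$ for $u\in\mathscr H^\omega$ and $z$ ranging over a $0$--neighborhood of $\g g_\eev$, applied simultaneously to the finitely many vectors coming from $T_n$ near $x^\circ$. At this level of generality that step is not justified: $\mathscr H^\omega$ is defined by analyticity of the orbit map, and passing from that to a locally convergent exponential series, jointly in $z$ and uniformly over a family of analytic vectors, is exactly the delicate point that the paper only handles much later (Steps 5--8 in the proof of Theorem \ref{thm-cinteg}) and only under the additional hypothesis that $G$ is a BCH--Lie group; moreover, writing group elements near $g^\circ$ as $g^\circ\exp z$ presupposes that the exponential map is an analytic local diffeomorphism, which is not part of the assumptions on an analytic Harish--Chandra pair in Proposition \ref{mctoposdef}. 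The repair is the same unitarity trick as in the smooth case: $f^{[n]}(x_1,\ldots,x_n,g)=\langle T_n(x_1,\ldots,x_n),\pi(g^{-1})w\rangle$ is a composition of the continuous multilinear (hence analytic) map $T_n$, the orbit map $g\mapsto\pi(g^{-1})w$, which is analytic precisely because $w\in\mathscr H^\omega$, and the continuous pairing; this gives joint analyticity with no series manipulation at all (and, incidentally, without needing Lemma \ref{smthvecbcmsan} here).
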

\begin{proof}
(i) It is fairly straightforward to check that the map 
\[
f:U(\g g_\C)\to C^\infty(G,\C)\ ,\ 
f(D)(g):=\varphi_{v,w}(g,D)
\]
is in $\homrm_{\g g_\eev}(U(\g g_\C),C^\infty(G,\C))$. 
To complete the proof, we need to show that for
every $n\geq 0$ the map
\begin{equation}
\label{vpinvwggnc}
\varphi_{n,v,w}:
\g g^n\times G\to \C
\ ,\ \varphi_{n,v,w}(x_1,\ldots,x_n,g):= \langle\pi(g)\rho^\pi(x_1\cdots x_n)v,w\rangle
\end{equation}
is smooth. 
Note that
$
\varphi_{n,v,w}(x_1,\ldots,x_n,g)
=\langle
\rho^\pi(x_1\cdots x_n)v,\pi(g^{-1})w
\rangle.
$
By Lemma~\ref{lme-contofgnh} the $n$-linear map 
$(x_1,\ldots,x_n)\mapsto \rho^\pi(x_1\cdots x_n)v$ is continuous, hence smooth. Since $w\in\mathscr H^\infty$, the map $g\mapsto \pi(g^{-1})w$ is smooth. Therefore the map \eqref{vpinvwggnc} is also smooth.

(ii) The proof is similar to (i), as 
a continuous multilinear map is analytic.
\end{proof}

In Theorem \ref{gns-smth} we prove a converse for Proposition
\ref{mctoposdef}.

\begin{remark}
Assume $\g g_\ood=\{0\}$. Then the condition of Definition \ref{def-pdfunc} a priori seems to be stronger than the condition which is classically used to define positive definite functions on a Lie 
group: classically, a map $f:G\to\C$ is called positive definite if  
\[
\sum_{1\leq i,j\leq n}c_i\overline c_jf(g_ig_j^{-1})\geq 0
\ \text{ for }\,n\geq 1,\, g_1,\ldots, g_n\in G\,, 
\text{ and }c_1,\ldots,c_n\in \C.
\] 
However, for smooth maps $f:G\to\C$, the classical definition and Definition \ref{def-pdfunc} are equivalent. In fact, if $f\in C^\infty(G,\C)$ is positive definite in the classical sense, 
then by the  GNS construction \cite[III.1.22]{neebbook}
we have
$f(g)=\langle \pi(g)v,v\rangle$ for some unitary representation $(\pi,\mathscr H)$ of $G$. Since $f$ is smooth, from \cite[Thm 7.2]{neebdiff} it follows that $v\in\mathscr H^\infty$, and therefore the map 
\[
\Smi\to \C\ ,\ (g,D)\mapsto \langle
\pi(g)\rho^\pi(D)v,v\rangle
\]
is well-defined. It is easy to check that the latter map is 
positive definite in the sense of
Definition \ref{def-pdfunc}.
\end{remark}

\subsection{The reproducing kernel Hilbert space}

To every smooth unitary representation
$(\pi,\rho^\pi,\mathscr H)$ of $(G,\g g)$ 
we can associate a representation
$\widetilde{\rho^\pi}$ of the monoid $\Smi$ as follows:
\begin{equation}
\label{wtrhopii}
\widetilde{\rho^\pi}:\Smi\to\mathrm{End}(\mathscr H^\infty)\ ,\
\widetilde{\rho^\pi}(g,D):=\pi(g)\rho^\pi(D)\text{ for every }
(g,D)\in \Smi.
\end{equation}
Observe that $(\widetilde{\rho^\pi},\mathscr H^\infty)$  is a 
\emph{$*$-representation}, i.e., 
$\langle \widetilde{\rho^\pi}(s)v,w\rangle=\langle v,\widetilde{\rho^\pi}(s^*)w\rangle$ for every $s\in \Smi$ and every $v,w\in \mathscr H^\infty$.
It is easy to check that for every $v\in\mathscr H^\infty$ the matrix coefficient
$\varphi_{v,v}$
is positive definite. 

Conversely, one can associate a $*$-representation of $\Smi$ to a positive definite function $\varphi:\Smi\to\C$ as follows. 
Set 
\begin{equation}
\label{eq-Dsubf}
\mathscr D_\varphi:=
\mathrm{Span}_\C \{\varphi_s\ :\ s\in \Smi\}
\subseteq\C^\Smi
\end{equation}
where $\varphi_s:\Smi\to \C$ is defined by $\varphi_s(t):=\varphi(ts)$. 
Observe that $\mathscr D_\varphi$ has a pre-Hilbert space structure given by
\[
\langle \varphi_s,\varphi_t\rangle:=\varphi(t^*s)
.\]
Set $K(s,t):=\varphi(st^*)$ and define $K_s:\Smi\to \C$ by
$K_s(t):=K(t,s)=\varphi_{s^*}(t)$.
The completion 
$\mathscr H_\varphi$ of $\mathscr D_\varphi$ 
is a reproducing kernel Hilbert space with kernel
$K(s,t)$. In other words, one can identify $\mathscr H_\varphi$ 
with a space of complex valued functions on $\Smi$ such that 
\begin{equation}
\label{eq-hs=hks}
h(s)=\langle h,K_s\rangle \text{ for every }h\in\mathscr H_\varphi\text{ and every }s\in \Smi.
\end{equation}

The monoid $\Smi$ acts on $\mathscr D_\varphi$ by right translation, yielding a $*$-representation 
$(\widetilde{\rho_\varphi},\mathscr D_\varphi)$ of $\Smi$. More precisely,
\[
\big(\widetilde{\rho_\varphi}(s)\psi\big)(t)=\psi(ts)\text{ for every }s,t\in \Smi\text{ and every }\psi\in\mathscr D_\varphi.
\]
\begin{remark}
\label{rem-unitabsgrp}
If an element $s\in \Smi$ satisfies $ss^*=s^*s=1_\Smi$, 
then $\widetilde{\rho_\varphi}(s):\mathscr D_\varphi\to\mathscr D_\varphi$  is an isometry and therefore extends to an isometry 
$\widetilde{\rho_\varphi}(s):\mathscr H_\varphi\to\mathscr H_\varphi$, yielding a unitary representation of the abstract group $\{s\in \Smi\ :\ ss^*=s^*s=1_\Smi\}$.
\end{remark}

\subsection{Cyclic representations and the GNS construction}
Our next goal is to prove Theorem \ref{gns-smth} below, which is the analogue of the GNS construction for Lie supergroups.

\begin{definition}
A smooth 
(resp. analytic) unitary representation $(\pi,\rho^\pi,\mathscr H)$ is called \emph{cyclic} if there exists a vector $v_\circ\in\mathscr H^\infty_\eev$ (resp. $v_\circ\in\mathscr H^\omega_\eev$)
such that the set
\[
\mathrm{Span}_\C\{\pi(g)\rho^\pi(D)v_\circ\ :\ g\in G\text{ and } D\in U(\g g_\C)\}
\] 
is dense in $\mathscr H$. The vector $v_\circ$ is called a \emph{cyclic vector} of $(\pi,\rho^\pi,\mathscr H)$. 
\end{definition}
\begin{remark}
To indicate that a unitary representation $(\pi,\rho^\pi,\mathscr H)$ is cyclic with a cyclic vector $v_\circ$, we write $(\pi,\rho^\pi,\mathscr H,v_\circ)$.
\end{remark}
\begin{definition}
We say a Lie group $G$ has the \emph{Trotter property} if for every
$x,y\in\Lie(G)$ 
the equality
\[
e^{t(x+y)}=\lim_{n\to\infty}
\big(e^{\frac{t}{n}x}e^{\frac{t}{n}y}\big)^n
\]
holds in the sense of uniform convergence on compact subsets of $\R$.
\end{definition}
\begin{example}
\label{ex:trot}
We now 
mention some examples of Lie supergroups $\Gsu$ for which $\Gsu_{\Lambda_0}$ has the Trotter property.
Proofs are given in \cite{nsfreshetsuper}.

\begin{itemize}
\item[(i)]
Every locally exponential Lie group (and 
in particular every Banach--Lie group) has the Trotter property. Therefore if $\Gsu$ is a Lie supergroup modeled on a $\Z_2$--graded Banach space, then 
$\Gsu_{\Lambda_0}$ has the Trotter property.
\item[(ii)] From (i) it follows that 
if $M$ is a compact smooth manifold and $K$ is a 
finite dimensional Lie group,  
then the mapping group $C^\infty(M,K)$ and
its
central extensions 
have the Trotter property. These Lie groups appear as  
$\Gsu_{\Lambda_0}$ of mapping Lie supergroups  
$\Gsu:=C^\infty(M,\mathcal K)$, where $M$ is a compact manifold and $\mathcal K$ is a finite-dimensional Lie supergroup, or 
mapping Lie
supergroups $\Gsu:=C^\infty(\Msu,K)$, where $\Msu$ is a compact supermanifold and
$K$ is a finite-dimensional Lie group.

\item[(iii)] If $M$ is a compact smooth manifold then the group $\mathrm{Diff}(M)$ of smooth diffeomorphisms of $M$  and its central extensions have 
the Trotter property. This implies that 
if $\Gsu$ is the Lie supergroup of smooth diffeomorphisms of certain compact supermanifolds, such as the supercircle $S^{1|1}$, then $\Gsu_{\Lambda_0}$ has the Trotter property. We expect the latter statement to hold for other classes of compact supermanifolds.

\end{itemize}
\end{example}

\begin{theorem}
\label{gns-smth}
Let $(G,\g g)$ be a Harish--Chandra pair such that $\g g$ is a Fr\' echet--Lie superalgebra and $G$ has the Trotter property. 
Let $f\in \higuc$ be
positive definite. 
\begin{itemize}
\item[(i)]  
There exists a cyclic smooth unitary representation 
$(\pi,\rho^{\pi},\mathscr H,v_\circ)$ of 
$(G,\g g)$ such that
$\Sg{f}=\varphi_{v_\circ,v_\circ}$.
\item[(ii)] Let $(\sigma,\rho^{\sigma},\mathscr K,w_\circ)$ be another cyclic smooth unitary representation of $(G,\g g)$ such that $\Sg{f}=\varphi_{w_\circ,w_\circ}$.
Then 
$(\pi,\rho^{\pi},\mathscr H)$ and $(\sigma,\rho^{\sigma},\mathscr K)$ are unitarily equivalent via 
an intertwining operator
\[
T:(\sigma,\rho^{\sigma},\mathscr K)\to
(\pi,\rho^{\pi},\mathscr H)
\] 
which maps $w_\circ$ to $v_\circ$.

\item[(iii)] Assume that $(G,\g g)$ is an analytic Harish--Chandra pair and 
the map 
\[
G\to \C\ ,\ g\mapsto f(1_{U(\g g_\C)})(g)
\]
is analytic. Then the representation $(\pi,\mathscr H,\rho^\pi,v_\circ)$ obtained in {\upshape(i)} is an 
analytic representation of $(G,\g g)$.
\end{itemize}

\end{theorem}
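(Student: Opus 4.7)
The plan is to build $(\pi,\rho^\pi,\mathscr H,v_\circ)$ from the reproducing kernel Hilbert space $\mathscr H_{\breve f}$ already set up before the statement, then verify smoothness using the Trotter property, and finally bootstrap to analyticity using the hypothesis on $g\mapsto f(1_{U(\g g_\C)})(g)$.

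For part (i), I would take $\varphi:=\breve f$ and consider the pre-Hilbert space $\mathscr D_\varphi$ of \eqref{eq-Dsubf}, its completion $\mathscr H_\varphi$, the $*$-representation $\widetilde{\rho_\varphi}:\Smi\to\End(\mathscr D_\varphi)$ by right translation, and the cyclic vector $v_\circ:=\varphi_{1_\Smi}=K_{1_\Smi}$. For $g\in G$ the element $(g,1_{U(\g g_\C)})\in\Smi$ satisfies $ss^*=s^*s=1_\Smi$, so by Remark \ref{rem-unitabsgrp} it gives a unitary $\pi(g)$ on $\mathscr H_\varphi$, and one checks $g\mapsto \pi(g)$ is a group homomorphism. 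For $D\in U(\g g_\C)$ the operator $\widetilde{\rho_\varphi}(\yek_G,D)$ provides a candidate for $\rho^\pi(D)$, defined on $\mathscr D_\varphi$. The $\Z_2$-grading on $\mathscr H_\varphi$ is defined so that $\mathscr H_\varphi^{\eev}$ (resp.~$\mathscr H_\varphi^{\ood}$) is the closure of the span of $\varphi_s$ for $s$ even (resp.~odd); the evenness hypothesis on $f$ in Definition \ref{def-pdfunc} guarantees that this decomposition is orthogonal and that $\pi(g)$ preserves it.

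The heart of the argument is to prove that $\pi$ is a smooth representation of $G$ and that $\rho^\pi(x)=\dd\pi(x)|_{\mathscr H^\infty}$ for $x\in\g g_\eev$. Using Lemma \ref{lem-chainruldif} and Lemma \ref{lem-fugchb}, one obtains that for every $\psi\in\mathscr D_\varphi$ the map $g\mapsto\pi(g)\psi$ is $C^k$ for every $k$, with derivatives that can be computed by plugging elements of $U(\g g_\C)$ into the second slot of $\breve f$. In particular $\mathscr D_\varphi\subseteq\mathscr H_\varphi^\infty$ and $v_\circ\in\mathscr H_\varphi^\infty$. The Trotter property of $G$ is needed precisely here: combined with symmetry of $-\rho^\pi(x)$ for $x\in\g g_\eev$, it forces $\pi(e^{tx})=e^{t\,\dd\pi(x)}$ with $\dd\pi(x)|_{\mathscr D_\varphi}=\rho^\pi(x)$. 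The compatibility axioms (iv) and (v) of Definition \ref{defi-smoothanalytic} follow from the involution $s\mapsto s^*$ and the compatibility $(g^{-1},1)(\yek_G,D)(g,1)=(\yek_G,g\cdot D)$ in $\Smi$. Cyclicity of $v_\circ$ is immediate from $\pi(g)\rho^\pi(D)v_\circ=\varphi_{(g,D)}$ and the definition of $\mathscr D_\varphi$. The main obstacle is showing that $\rho^\pi$, a priori only a representation of the monoid $\Smi$, descends to a genuine Lie superalgebra representation of $\g g$ on $\mathscr H^\infty$ satisfying the compatibility with $\dd\pi$; here the stability theorem of \cite{nsfreshetsuper} is essential, as it ensures that the action of $U(\g g_\C)$ on $\mathscr H^\infty$ defined via $\Smi$ is compatible with the Harish--Chandra pair axioms.

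For part (ii), I would define $T:\mathscr D_{\varphi_{w_\circ,w_\circ}}\to \mathscr K$ on the dense subspace generated by $w_\circ$ by sending $\sigma(g)\rho^\sigma(D)w_\circ$ (identified with a function on $\Smi$ via the matrix coefficient) to $\pi(g)\rho^\pi(D)v_\circ$, and similarly from the $\pi$ side. The equality $\breve f=\varphi_{v_\circ,v_\circ}=\varphi_{w_\circ,w_\circ}$ shows $T$ is isometric; it extends to a unitary and intertwines both the $G$-actions and the $\g g$-actions because the $\Smi$-representations defined by right translation are canonically isomorphic to $\widetilde{\rho^\pi}$ and $\widetilde{\rho^\sigma}$ from \eqref{wtrhopii}. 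For part (iii), the hypothesis says that $g\mapsto\langle\pi(g)v_\circ,v_\circ\rangle=f(1_{U(\g g_\C)})(g)$ is analytic, so by \cite[Thm 5.2]{neebanalytic} one gets $v_\circ\in\mathscr H^\omega$. An analog of Lemma \ref{smthvecbcmsan} shows that $\rho^\pi(D)v_\circ\in\mathscr H^\omega$ for all $D\in U(\g g_\C)$, so $\mathscr D_\varphi\subseteq\mathscr H^\omega$; since $\mathscr D_\varphi$ is cyclic, the representation $\pi$ is analytic, which upgrades $(\pi,\rho^\pi,\mathscr H,v_\circ)$ to an analytic unitary representation of $(G,\g g)$.
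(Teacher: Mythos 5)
Your overall architecture for part (i) matches the paper's: take $\varphi:=\Sg{f}$, build the reproducing kernel space $\mathscr H_\varphi$ with cyclic vector $v_\circ=\varphi$, let $\pi(g):=\widetilde{\rho_\varphi}(g,1_{U(\g g_\C)})$ via Remark \ref{rem-unitabsgrp}, identify $\dd\pi(x)|_{\mathscr D_\varphi}$ with $\widetilde{\rho_\varphi}(\yek_G,x)$ via Lemmas \ref{lem-chainruldif} and \ref{lem-fugchb}, and then invoke the stability theorem of \cite{nsfreshetsuper} (this is where the Trotter property enters, through the notion of pre-representation and \cite[Thm 6.13]{nsfreshetsuper}) to produce $\rho^\pi$ on all of $\mathscr H^\infty$. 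Part (iii) is also essentially the paper's argument: analyticity of $g\mapsto f(1_{U(\g g_\C)})(g)$ plus \cite[Thm 5.2]{neebanalytic} gives $v_\circ\in\mathscr H^\omega$, Lemma \ref{smthvecbcmsan} and $G$-invariance of $\mathscr H^\omega$ give $\mathscr D_\varphi\subseteq\mathscr H^\omega$, and \cite[Thm 6.13(b)]{nsfreshetsuper} upgrades the representation. One smaller point in (i): your claim that Lemmas \ref{lem-chainruldif} and \ref{lem-fugchb} directly yield norm-smoothness of the orbit maps $g\mapsto\pi(g)\psi$ is not justified as stated; those lemmas give pointwise formulas on $\Smi$, and the paper instead reduces smoothness of $\pi$ and the inclusion $\mathscr D_\varphi\subseteq\mathscr H^\infty$ to smoothness of the matrix coefficients $g\mapsto\langle\pi(g)\varphi_s,\varphi_s\rangle=f\big((g_\circ^{-1}g^{-1}g_\circ\cdot D_\circ^*)D_\circ\big)(g_\circ^{-1}gg_\circ)$ via \cite[Thm 7.2]{neebdiff}, which in turn follows from smoothness of the maps $f^{[n]}$.

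The genuine gap is in part (ii). You define $T$ on the cyclic subspaces and argue that it ``intertwines both the $G$-actions and the $\g g$-actions because the $\Smi$-representations defined by right translation are canonically isomorphic.'' This only gives $T\widetilde{\rho^\sigma}(s)u=\widetilde{\rho^\pi}(s)Tu$ for $u$ in the dense subspace $\mathscr D_{w_\circ}$, whereas Definition \ref{defi-smoothanalytic} requires $\rho^\pi$ and $\rho^\sigma$ to be defined on all of $\mathscr H^\infty$ and $\mathscr K^\infty$, so unitary equivalence demands $T\rho^\sigma(x)u=\rho^\pi(x)Tu$ for \emph{every} smooth vector $u$. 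For $x\in\g g_\ood$ these are unbounded symmetric operators, and agreement on a dense invariant subspace does not by itself imply agreement on the full common domain $\mathscr H^\infty$; this is precisely the point at which the classical GNS uniqueness proof (\cite[Thm III.1.22]{neebbook}) breaks down here, as the paper emphasizes. The paper closes this gap by setting $P_1:=e^{-\frac{\pi i}{4}}\rho^\pi(x)$, $P_2:=e^{-\frac{\pi i}{4}}T\rho^\sigma(x)T^{-1}|_{\mathscr H^\infty}$ and $\mathscr L:=\mathscr D_{v_\circ}$, noting $P_1|_{\mathscr L}=P_2|_{\mathscr L}$, and then using $G$-invariance of $\mathscr L$ together with \cite[Lem. 2.2(a)]{menesa} (essential self-adjointness of $(P_1|_{\mathscr L})^2$), \cite[Lem. 2.4]{menesa} (hence $P_1|_{\mathscr L}$ is essentially self-adjoint) and \cite[Lem. 2.5]{menesa} to conclude $P_1=P_2$. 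Without an argument of this kind (or some substitute controlling the closures of the odd operators), your part (ii) does not establish that $T$ intertwines $\rho^\sigma$ and $\rho^\pi$ on the whole spaces of smooth vectors.
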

\begin{proof}
(i)
Set $\varphi:=\Sg{f}$. Let $\mathscr H:=\mathscr H_\varphi$ be the Hilbert completion of $\mathscr D_\varphi$  defined in
\eqref{eq-Dsubf}, and $v_\circ:=\varphi\in\mathscr H$.
For every 
$s:=(g,D)\in \Smi=G\times U(\g g)$ we have
\[
\varphi_{v_\circ,v_\circ}(s)=\langle\varphi_s,\varphi\rangle=\varphi(s)=\Sg{f}(s).
\]
By Remark \ref{rem-unitabsgrp}, 
if $g\in G$ then 
$\widetilde{\rho_\varphi}\big(g,1_{U(\g g_\C)}\big)$ extends to an isometry of $\mathscr H_\varphi$.
Setting $\pi(g):=\widetilde{\rho_\varphi}
\big(g,1_{U(\g g_\C)}\big)$
we obtain a representation $\pi$ of $G$ on $\mathscr H$ by unitary operators.

Our next goal is to prove that $(\pi,\mathscr H)$ is a smooth unitary representation of $G$ and $\mathscr D_\varphi\subseteq \mathscr H^\infty$, where $\mathscr H^\infty$ is the subspace of smooth vectors of $(\pi,\mathscr H)$. By \cite[Theorem 7.2]{neebdiff} it suffices to prove that 
for every $s\in \Smi$ the map
\begin{equation}
\label{gccgpiphi}
G\to \C\ ,\ g\mapsto \langle\pi(g)\varphi_s,\varphi_s\rangle
\end{equation}
is smooth. Fix 
$s=(g_\circ,D_\circ)\in \Smi$ and let 
$\tilde g:=(g,1_{U(\g g_\C)})\in \Smi$ for every $g\in G$. Note that
\[
\langle\pi(g)\varphi_s,\varphi_s\rangle
=\langle\varphi_{\tilde gs},\varphi_s\rangle
=\varphi(s^*\tilde gs)=f\big((g_\circ^{-1}g^{-1}g_\circ\cdot D_\circ^*) D_\circ\big)(g_\circ^{-1}gg_\circ)
\]
and therefore 
smoothness of \eqref{gccgpiphi} follows from
smoothness of the map \eqref{ggGsmuth}.

Next we prove that if $x\in \g g_\eev$ and  $v\in\mathscr D_\varphi$ then $\dd\pi(x)v=\widetilde{\rho_\varphi}\big(\yek_G,x\big)v$. It suffices to take $v=\varphi_s$ for some $s:=(g_\circ,D_\circ)\in \Smi$. 
Let $s':=(g,D)\in \Smi$. Then
$
\varphi_s(s')=\varphi(s's)=f\big((g_\circ^{-1}\cdot D)D_\circ\big)(gg_\circ)
$.
Lemma \ref{lem-fugchb} implies that $\varphi_s=\Sg{h}$ for some $h\in\higuc$. 
From \eqref{eq-hs=hks} and 
Lemma \ref{lem-chainruldif} it follows that
\begin{align*}
\big(\dd\pi(x)\varphi_s\big)(s')&=\langle \dd\pi(x)\Sg{h},K_{s'}\rangle=
\langle\lim_{t\to 0}
\frac{1}{t}(\pi(e^{tx})\Sg{h}-\Sg{h}),K_{s'}\rangle\\
&=\lim_{t\to 0}\frac{1}{t}
\langle\pi(e^{tx})\Sg{h}-\Sg{h}
,K_{s'}\rangle=
\lim_{t\to 0}\frac{1}{t}
(\Sg{h}(ge^{tx},e^{-tx}\cdot D)-\Sg{h}(g,D))\\
&=\Sg{h}(g,Dx)=\varphi_s(g,Dx)=\varphi_s
\big((g,D)(\yek_G,x)\big)=
\big(\widetilde{\rho_\varphi}\big(\yek_G,x\big)
\varphi_s\big)(s').
\end{align*}

Finally, to complete the proof of existence of 
$(\pi, \rho^\pi,\mathscr H,v_\circ)$ set
$
\rho^{\mathscr D_\varphi}(x)
:=
\widetilde{\rho_\varphi}(\yek_G,x)
$ 
for every 
$x\in\g g$.
From linearity of 
$
\varphi_s(g,D)
$
in $D$
it follows directly that the $\Z_2$-grading of $U(\g g_\C)$ induces a $\Z_2$--grading on $\mathscr D_\varphi$ (and hence on 
$\mathscr H_\varphi$) and the actions of $G$ and $\g g$ are compatible with the $\Z_2$--grading.
From \cite[Lem. 2.2(a)]{menesa} 
it follows that for every $x\in\g g_\eev$,
the operator
 $\rho^{\mathscr D_\varphi}(x)$ is 
essentially skew-adjoint.
Consequently,
the 4-tuple 
$(\pi,\mathscr H, \mathscr D_\varphi,\rho^{\mathscr D_\varphi})$ is 
a pre-representation of $(\g g,G)$ in the sense of \cite[Def. 6.4]{nsfreshetsuper}. Therefore the existence of $(\pi,\rho^\pi,\mathscr H)$ follows from 
\cite[Thm 6.13]{nsfreshetsuper}.

(ii) In principle, the proof is similar to the standard uniqueness proofs of the GNS construction (e.g., see \cite[Thm III.1.22]{neebbook}). 
Nevertheless,  \cite[Thm III.1.22]{neebbook} 
is not directly applicable because for example the representation of the semigroup $\Smi$ is not bounded.
The new technical issues that arise in the super context 
will be addressed below.

Let $\mathscr H^\infty$ (resp., $\mathscr K^\infty$) denote the set of smooth vectors of $(\pi,\mathscr H)$ (resp., $(\sigma,\mathscr K)$). As in
\eqref{wtrhopii} we obtain $*$-representations 
$(\widetilde{\rho^\pi},\mathscr H^\infty)$ and $(\widetilde{\rho^\sigma},\mathscr K^\infty)$ of $\Smi$. Set 
\[
\mathscr D_{v_\circ}:=\mathrm{Span}_\C\{\widetilde{\rho^\pi}(s)v_\circ\ :\ s\in \Smi\}
\text{ and }
\mathscr D_{w_\circ}:=\mathrm{Span}_\C\{\widetilde{\rho^\sigma}(s)w_\circ\ :\ s\in \Smi\}.
\]
Define a $\C$-linear map 
$T:\mathscr D_{w_\circ}\to\mathscr D_{v_\circ}$ by 
\[
T\widetilde{\rho^\sigma}(s)w_\circ:=
\widetilde{\rho^\pi}(s)v_\circ 
\text{ for all }s\in \Smi.
\] 
It is straightforward to check that $T$ is well-defined and extends to an isometry $T:\mathscr K\to\mathscr H$. 
From the definition of $T$ it follows that 
\begin{equation}
\label{partialinter}
T\widetilde{\rho^\sigma}(s)u=\widetilde{\rho^\pi}(s)Tu
\text{ for every }u\in \mathscr D_{w_\circ}\text{ and every }s\in \Smi.
\end{equation}
Since $\mathscr D_{w_\circ}$ 
is dense in $\mathscr K$, from \eqref{partialinter} it follows that 
\[
T\sigma(g)u=\pi(g)Tu\text{ for every }u\in\mathscr K\text{ and every }
g\in G.
\]
Consequently, $T\mathscr K^\infty=\mathscr H^\infty$. Next we prove that
\[
T\rho^\sigma(x)u=\rho^\pi(x)Tu\text{ for every }u\in \mathscr K^\infty\text{ and every }x\in\g g.
\]
It suffices to prove the latter statement for $x\in\g g_\ood$. Set 
\[
P_1:=e^{-\frac{\pi i}{4}}\rho^\pi(x),\,
P_2:=e^{-\frac{\pi i}{4}}T\rho^\sigma(x)T^{-1}\res{\mathscr H^\infty}
\text{ and }\mathscr L:=T\mathscr D_{w_\circ}=\mathscr D_{v_\circ}.
\]
The linear operators $P_1$ and $P_2$ are symmetric
 with common domain $\mathscr H^\infty$ such that 
\[
P_1\res{\mathscr L}=P_2\res{\mathscr L}.
\]
Since $\mathscr L$ is $G$--invariant, from \cite[Lem. 2.2(a)]{menesa}
it follows that $(P_1\res{\mathscr L})^2$ is essentially self-adjoint, and
therefore by \cite[Lem. 2.4]{menesa} the operator
$P_1\res{\mathscr L}$ is essentially self-adjoint.
Consequently, by  \cite[Lem. 2.5]{menesa} we have $P_1=P_2$.

(iii) Let $(\pi,\rho^\pi,\mathscr H)$ be the smooth unitary representation obtained in (i). 
By \cite[Thm 6.13(b)]{nsfreshetsuper} it is enough to 
show that $\mathscr D_\varphi\subseteq\mathscr H^\omega$.  Since 
$
\langle \pi(g)\varphi,\varphi\rangle
=f(1_{U(\g g_\C)})(g)
$,
from \cite[Thm 5.2]{neebanalytic} 
it follows that $\varphi\in\mathscr H^\omega$. 
From Lemma~\ref{smthvecbcmsan} it follows that $\rho^\pi(D_\circ)\varphi\in\mathscr H^\omega$ for every $D_\circ\in U(\g g_\C)$. Finally, for every
$s:=(g_\circ,D_\circ)\in \Smi$ we have
\[
\varphi_s=\widetilde{\rho_\varphi}(s)\varphi=
\widetilde{\rho_\varphi}(g_\circ,1_{U(\g g_\C)})\widetilde{\rho_\varphi}(\yek_G,D_\circ)\varphi=\pi(g_\circ)
\rho^\pi(D_\circ)\varphi\in\mathscr H^\omega
\]
because $\mathscr H^\omega$ is $G$--invariant.
\end{proof}

The next corollary, which is of independent interest, is in a sense an automatic analyticity criterion for smooth 
superfunctions in odd directions.

\begin{corollary}
\label{cor-autoan}
Let $\Gsu$ be an analytic Lie supergroup modeled on a $\Z_2$--graded Fr\' echet space and $(G,\g g)$ be the Harish--Chandra pair associated to $\Gsu$. 
Let $f\in C^\infty(G,\g g)$ be positive definite. If 
Assume that $G$ has the Trotter property. 
If $f(1_{U(\g g_\C)})\in C^\omega(G,\C)$ then $f\in C^\omega(G,\g g)$. 

\end{corollary}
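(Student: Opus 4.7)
The plan is to combine the GNS construction with the converse matrix--coefficient correspondence, using the analyticity of $f(1_{U(\g g_\C)})$ to propagate analyticity from the underlying group $G$ to the full superfunction $f$. The analyticity hypothesis on $\Gsu$ serves only to guarantee that the associated Harish--Chandra pair $(G,\g g)$ is an analytic Harish--Chandra pair, which is the remark following Theorem \ref{liesg-hcpair}.

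First I would apply Theorem \ref{gns-smth}(i) to the positive definite superfunction $f\in\higuc$ to produce a cyclic smooth unitary representation $(\pi,\rho^\pi,\mathscr H,v_\circ)$ of $(G,\g g)$ with $\Sg{f}=\varphi_{v_\circ,v_\circ}$. Since $v_\circ\in\mathscr H^\infty_\eev$ by the definition of a cyclic representation, setting $D=1_{U(\g g_\C)}$ gives
\[
f(1_{U(\g g_\C)})(g)=\Sg{f}(g,1_{U(\g g_\C)})=\langle \pi(g)v_\circ,v_\circ\rangle
\quad\text{for every }g\in G,
\]
and this function is analytic on $G$ by hypothesis.

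Next I would invoke Theorem \ref{gns-smth}(iii). Its two hypotheses are now in place: $(G,\g g)$ is an analytic Harish--Chandra pair, and the diagonal matrix coefficient $g\mapsto \langle \pi(g)v_\circ,v_\circ\rangle$ is analytic. The conclusion is that $(\pi,\rho^\pi,\mathscr H,v_\circ)$ is in fact an \emph{analytic} unitary representation of $(G,\g g)$ and that $v_\circ\in\mathscr H^\omega_\eev$.

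Finally I would apply Proposition \ref{mctoposdef}(ii) to the pair of even analytic vectors $v_\circ,v_\circ$: this produces some $\tilde f\in\higuo$ with $\Sg{\tilde f}=\varphi_{v_\circ,v_\circ}=\Sg{f}$. Unwinding the definition $\Sg{h}(g,D)=h(D)(g)$, equality of the two functions on $\Smi$ forces $\tilde f(D)(g)=f(D)(g)$ for every $D\in U(\g g_\C)$ and every $g\in G$; hence $\tilde f=f$ as $\C$-linear maps $U(\g g_\C)\to C^\infty(G,\C)$. Since $\tilde f\in\higuo$, this gives $f\in\higuo=C^\omega(G,\g g)$, which is the claim. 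No serious obstacle is expected: the heavy lifting has already been done in Theorem \ref{gns-smth}(iii) (which upgrades the GNS representation from smooth to analytic once the diagonal matrix coefficient is analytic) and in Proposition \ref{mctoposdef}(ii) (which returns analytic matrix coefficients to $\higuo$); the corollary amounts to chaining these two facts together.
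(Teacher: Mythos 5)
Your argument is correct and follows essentially the same route as the paper: invoke Theorem \ref{gns-smth}(iii) (via the GNS construction) to realize $\Sg{f}$ as $\varphi_{v_\circ,v_\circ}$ for an analytic vector $v_\circ$, then apply Proposition \ref{mctoposdef}(ii) to conclude $f\in\higuo$. Your write-up merely spells out the intermediate verifications (analyticity of the Harish--Chandra pair, the identity $f(1_{U(\g g_\C)})(g)=\langle\pi(g)v_\circ,v_\circ\rangle$, and the uniqueness step $\tilde f=f$) that the paper leaves implicit.
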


\begin{proof}
From Theorem \ref{gns-smth}(iii) it follows that $\Sg{f}=\varphi_{v,v}$, where $v$ is an analytic vector.
Thus, by Proposition \ref{mctoposdef},
we have $f\in\higuo$.
\end{proof}

\section{A characterisation of integrable linear functionals}
Let $(G,\g g)$ be an analytic Harish--Chandra pair such that $G$ is 1-connected (that is, connected and simply connected).
In this section we give a characterisation of 
$\C$--linear functionals $\lambda:U(\g g_\C)\to\C$ which are 
\emph{integrable} in the sense that
there exists an analytic unitary representation $(\pi,\rho^\pi,\mathscr H)$  
of $(G,\g g)$ such that $\lambda(D)=\langle \rho^\pi(D)v,v\rangle$ for some $v\in\mathscr H^\omega$. For Lie groups, this question is addressed in detail in \cite[Sec. 6]{neebanalytic}. 


\subsection{Weak and strong analyticity of linear functionals}
We begin by defining the notion of analyticity of a linear functional on $U(\g g_\C)$.
\begin{definition}
Let $\lambda:U(\g g_\C)\to \C$ be a $\C$--linear map.
We say $\lambda$ is \emph{even} if 
$\lambda(D)=0$ for every $D\in U(\g g_\C)_\ood$.
We say $\lambda$ is \emph{positive} if 
$\lambda$ is even and $\lambda(D^*D)\geq 0$ for every $D\in U(\g g_\C)$, where $D\mapsto D^*$ is the map 
given in
\eqref{dfostarrr}.
We say $\lambda$ is 
\emph{continuous} if for every $n\geq 0$ the map
\[
\g g^n\to \C\ ,\ 
(x_1,\ldots,x_n)\mapsto \lambda(x_1\cdots x_n)
\]
is continuous.
A continuous $\C$--linear map 
$\lambda:U(\g g_\C)\to \C$
 is called \emph{weakly analytic} if 
for every $D_1,D_2\in U(\g g_\C)$ there exists a
0-neighborhood $U_{D_1,D_2}\subseteq \g g_{\C,\eev}:=\g g_\eev\otimes_\R\C$ such that the series 
\[
\sum_{n=0}^\infty \frac{1}{n!}|\lambda(D_1x^nD_2)|
\]
converges for every $x\in U_{D_1,D_2}$.
A continuous $\C$--linear map 
$\lambda:U(\g g_\C)\to \C$
 is called  \emph{strongly analytic} if there exists a
0-neighborhood $U\subseteq\g g_{\C,\eev}$ 
such that the series
\[
\sum_{n=0}^\infty \frac{1}{n!}|\lambda(D_1x^nD_2)|
\]
converges for every $D_1,D_2\in U(\g g_\C)$ and every $x\in U$.
\end{definition}

\subsection{Characterisations of integrability}
We recall the definition of a BCH--Lie group.

\begin{definition}
An analytic  Lie group $G$ is called a \emph{BCH--Lie group} if the exponential map is an analytic diffeomorphism in an open 0-neighborhood.
\end{definition}
Every Banach--Lie group is a BCH Lie group. 
For a detailed study 
and several interesting examples of
BCH--Lie
 groups  
 see
\cite{glockner} and \cite[Sec. IV]{neebjpn}.

\begin{theorem}
\label{thm-cinteg}
Let $(G,\g g)$ be an analytic Harish--Chandra pair
such that $\g g$ is a Fr\' echet--Lie superalgebra and $G$ is a 1-connected BCH--Lie group. 
Let
\[\lambda:U(\g g_\C)\to\C
\] be a
$\C$--linear map.
The  
following statements are equivalent.
\begin{itemize}
\item[(i)] $\lambda$ is positive and strongly analytic.

\item[(ii)] There exists an analytic unitary representation $(\pi,\rho^\pi,\mathscr H)$ of $(G,\g g)$ and a homogeneous 
vector $v\in\mathscr H^\omega$ such that
$\lambda(D)=\langle \rho^\pi(D)v,v\rangle$ for every $D\in U(\g g_\C)$.

\end{itemize}

\end{theorem}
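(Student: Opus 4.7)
The direction (ii) $\Rightarrow$ (i) is verified directly. Positivity follows from $\lambda(D^*D) = \|\rho^\pi(D)v\|^2 \geq 0$, and evenness of $\lambda$ from the homogeneity of $v$ combined with the orthogonality of $\mathscr H_\eev$ and $\mathscr H_\ood$. Continuity of $\lambda$ is a consequence of Lemma \ref{lme-contofgnh}. For strong analyticity, I would use $v \in \mathscr H^\omega$ together with the fact that the orbit map extends analytically to a uniform complex neighborhood of $\yek_G$ in $G_\C$, so that for a suitable $r>0$ the series $\sum_n \frac{1}{n!}\|\rho^\pi(x)^n \rho^\pi(D_2)v\|$ converges for every $D_2 \in U(\g g_\C)$ and every $x$ in an $r$-neighborhood of $0$ in $\g g_{\C,\eev}$. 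The bound
\[
|\lambda(D_1 x^n D_2)| = |\langle \rho^\pi(x)^n \rho^\pi(D_2)v, \rho^\pi(D_1^*)v\rangle| \leq \|\rho^\pi(x)^n \rho^\pi(D_2)v\| \cdot \|\rho^\pi(D_1^*)v\|
\]
then yields strong analyticity.

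For the hard direction (i) $\Rightarrow$ (ii), I plan to imitate the classical algebraic GNS construction on the universal enveloping algebra. Set $\mathcal N := \{D \in U(\g g_\C) : \lambda(D^*D) = 0\}$. A standard Cauchy--Schwarz argument applied to the positive functional $A \mapsto \lambda(D^* A D)$ (for $D \in \mathcal N$) shows that $\mathcal N$ is a left ideal. The quotient $\mathscr V := U(\g g_\C)/\mathcal N$ is then a pre-Hilbert space with Hermitian form $\langle [D_1], [D_2]\rangle := \lambda(D_2^* D_1)$; let $\mathscr H$ be its Hilbert completion. Left multiplication defines a representation $\rho: U(\g g_\C) \to \End_\C(\mathscr V)$ with distinguished vector $v_\circ := [1_{U(\g g_\C)}]$, and $\lambda(D) = \langle \rho(D)v_\circ, v_\circ\rangle$ tautologically. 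The $\Z_2$-grading on $U(\g g_\C)$ descends to $\mathscr V$, and $v_\circ$ is even by the evenness hypothesis on $\lambda$.

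The main task is to integrate $\rho|_{\g g_\eev}$ to an analytic unitary representation $\pi$ of $G$ on $\mathscr H$. The identity $\|\rho(x)^n v_\circ\|^2 = \lambda((ix)^{2n})$ for $x \in \g g_\eev$, combined with the absolute convergence of $\sum_n \frac{|\lambda((ix)^n)|}{n!}$ on a fixed $0$-neighborhood in $\g g_{\C,\eev}$, shows that $v_\circ$ is an analytic vector for $\rho|_{\g g_\eev}$; more importantly, strong analyticity with nontrivial $D_1, D_2$ supplies a \emph{uniform} radius of analyticity for every vector in the orbit $\rho(U(\g g_\C))v_\circ$. Invoking a super-adapted version of the integration theorem from \cite{neebanalytic}, together with the BCH hypothesis and 1-connectedness of $G$, should produce the desired analytic unitary representation $(\pi, \mathscr H)$ of $G$ with $\dd\pi(x)|_{\mathscr V} = \rho(x)$ for $x \in \g g_\eev$, $v_\circ \in \mathscr H^\omega$, and $\mathscr V \subseteq \mathscr H^\omega$ preserved by $\pi(G)$.

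To promote this to a representation of the full Harish--Chandra pair, observe that the compatibility $[\rho(y), \rho(x)] = \rho([y,x])$ for $y \in \g g_\eev$, $x \in \g g_\ood$ is automatic from the definition of $\rho$ by left multiplication, which integrates along one-parameter subgroups to $\pi(g)\rho(x)\pi(g)^{-1} = \rho(\Ad(g)x)$ on $\mathscr V$. Essential skew/self-adjointness of the $\rho(x)$ on $\mathscr V$ follows from \cite[Lemma 2.2(a)]{menesa}, applicable once $G$-invariance of $\mathscr V$ is established. Theorem 6.13(b) of \cite{nsfreshetsuper} then upgrades the quadruple $(\pi, \mathscr H, \mathscr V, \rho)$ to an analytic unitary representation of $(G, \g g)$, completing the proof. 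The main obstacle is the integration step in the previous paragraph: the technical heart is adapting the argument of \cite{neebanalytic} so that the resulting $G$-action on $\mathscr H = \overline{U(\g g_\C)/\mathcal N}$ actually preserves the dense algebraic subspace $\mathscr V$, which in turn hinges on the uniform radius of analyticity across the whole orbit $\rho(U(\g g_\C))v_\circ$ delivered by strong analyticity.
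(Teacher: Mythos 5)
Your direction (i)$\Rightarrow$(ii) follows essentially the route of the paper: your quotient $U(\g g_\C)/\mathcal N$ with form $\lambda(D_2^*D_1)$ is canonically the paper's domain $\mathscr D_\lambda=\rho(U(\g g_\C))\lambda\subseteq U(\g g_\C)^*$ (realizing the GNS space inside the algebraic dual dispenses with checking that $\mathcal N$ is a left ideal), the ``uniform radius across the orbit'' you extract from strong analyticity is exactly the equianalyticity of $\mathscr D_\lambda$ in the sense of \cite[Def. 6.4]{neebanalytic}, and the integration is then done by citing \cite[Thm 6.8]{neebanalytic} off the shelf (no super-adaptation and no $G$-invariance of $\mathscr V$ is needed at that stage), followed by \cite[Lem. 2.2]{menesa} and \cite[Thm 6.13(b)]{nsfreshetsuper}, just as you indicate. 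Two points you gloss over but which are fillable: the passage from $\lambda(D^*x^{2n}D)$ to the series $\sum_n\frac{1}{n!}\|\rho_\lambda(x)^n\mu\|$ involves square roots and an $n!$ versus $(2n)!$ mismatch, handled in the paper by a Cauchy--Schwarz splitting after rescaling $x$ by $r>2$; and the ``main obstacle'' you name (making $\pi(G)$ preserve $\mathscr V$) is not actually required by the pre-representation machinery.

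The genuine gap is in (ii)$\Rightarrow$(i). You assert that analyticity of $v$ together with an analytic extension of the orbit map ``to a uniform complex neighborhood of $\yek_G$ in $G_\C$'' yields convergence of $\sum_n\frac{1}{n!}\|\rho^\pi(x)^n\rho^\pi(D_2)v\|$ for \emph{every} $D_2\in U(\g g_\C)$ on a \emph{fixed} neighborhood. This uniformity in $D_2$ is precisely the nontrivial content of that direction and does not follow from $v\in\mathscr H^\omega$: a priori each analytic vector $\rho^\pi(D_2)v$ has its own radius, which could shrink as $D_2$ ranges over $U(\g g_\C)$, and in this generality there need not even exist a complexification $G_\C$ to which orbit maps extend. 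The paper's Steps 5--8 supply exactly what is missing: one fixes $(p_\circ,r_\circ)$ so that $\exp$, the BCH product and the extended adjoint action are analytic on $U_{p_\circ,r_\circ}$, introduces the spaces $\mathscr H^{\omega,p,r}$ of vectors with radius of analyticity at least $r$ with respect to $p$, and proves (extending \cite[Lem. 3.3, Lem. 3.4, Prop. 4.9]{menesa} to the Fr\'echet BCH setting) that $\mathscr H^{\omega,p,r}$ is $\g g_\C$--invariant; only then does $v\in\mathscr H^{\omega,p,r}$ imply $\rho^\pi(D_2)v\in\mathscr H^{\omega,p,r}$ for all $D_2$, and your final estimate $|\lambda(D_1x^nD_2)|\leq\|\rho^\pi(x)^n\rho^\pi(D_2)v\|\cdot\|\rho^\pi(D_1^*)v\|$ closes the argument. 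Without this invariance statement (or an equivalent uniformity argument), your proof of strong analyticity is incomplete.
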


\begin{proof}
(i)$\Rightarrow$(ii): 
Let $U(\g g_\C)^*$ denote the algebraic 
dual of 
$U(\g g_\C)$ and 
\[
\rho:\g g_\C\to\End_\C(U(\g g_\C)^*)
\ ,\ 
\big(\rho(x)\mu\big)(D):=\mu(Dx)\text{\, for every }
D\in U(\g g_\C)
\]
be the right regular representation of $\g g_\C$ on $U(\g g_\C)^*$.
Set $\mathscr D_\lambda:=\rho(U(\g g_\C))\lambda$.
We denote the restriction of $\rho$ to $\mathscr D_\lambda$ by
$(\rho_\lambda,\mathscr D_\lambda)$.
We endow $\mathscr D_\lambda$ with a pre-Hilbert structure as follows:
\[
\langle \rho_\lambda(D_1)\lambda,\rho_\lambda(D_2)\lambda\rangle:=
\lambda(D_2^*D_1)\text{ for every }D_1,D_2\in U(\g g_\C).
\]
It is easily checked that $\langle \rho_\lambda(D)\mu_1,\mu_2\rangle=
\langle \mu_1,\rho_\lambda(D^*)\mu_2\rangle$ for every $\mu_1,\mu_2\in\mathscr D_\lambda$, i.e., $(\rho_\lambda,\mathscr D_\lambda)$ is a $*$-representation. Since $\lambda$ is even, the $\Z_2$--grading 
of $U(\g g_\C)$ induces a $\Z_2$--grading on $\mathscr D_\lambda$ with perpendicular homogeneous components. 
The rest of the proof is given in the following four steps:

\textbf{Step 1.} 
Since $\lambda$ is continuous, 
$(\rho_\lambda,\mathscr D_\lambda)$ is 
a strongly continuous representation of $\g g_\eev$, 
i.e., for every $\mu\in\mathscr D_\lambda$ the map 
\[
\g g_\eev\to\mathscr D_\lambda\ ,\ 
x\mapsto \rho_\lambda(x)\mu
\]
is continuous.

\textbf{Step 2.}
We claim that 
$\mathscr D_\lambda$ is equianalytic in the sense of 
\cite[Def. 6.4]{neebanalytic}, that is, there exists a 0-neighborhood $V\subseteq \g g_\eev$ such that for every
$\mu\in\mathscr D_\lambda$
the series
\[
\sum_{n=0}^\infty\frac{1}{n!}\|\rho_\lambda(x)^n\mu\|
\] 
converges for all 
$x\in V$. 
Let $U\subseteq \g g_\eev$ be a 0-neighborhood 
such that
the series
\[
\sum_{n=0}^\infty\frac{1}{n!}|\lambda(D_1x^nD_2)|
\]
converges for every $D_1,D_2\in U(\g g_\C)$ and every $x\in U$.
Assume that $\mu=\rho_\lambda(D_\circ)\lambda$ for some $D_\circ\in U(\g g_\C)$. 
Then \[
||\rho_\lambda(x)^n\mu||^2=\langle \rho_\lambda(x^nD_\circ)\lambda, 
\rho_\lambda(x^nD_\circ)\lambda\rangle=|\lambda(D_\circ^*x^{2n}D_\circ)|.
\]
Set $V=\frac{1}{r}U:=\{x\in \g g_\eev\,:\, rx\in U\}$ where $r>2$. If $x\in V$ 
then by the Cauchy--Schwarz inequality
\begin{align*}
\sum_{n=0}^\infty\frac{1}{n!}\|\rho_\lambda(x)^n\mu\|
&=
\sum_{n=0}^\infty\frac{1}{n!}
|\lambda(D_\circ^*x^{2n}D_\circ)|^\frac{1}{2}\\
&\leq
\left(\sum_{n=0}^\infty
\frac{1}{(2n)!}
|\lambda(D_\circ^*(rx)^{2n}D_\circ)|\right)^\frac{1}{2}
\left(
\sum_{n=0}^\infty
\frac{(2n)!}{n!n!r^{2n}}
\right)^\frac{1}{2}<\infty.
\end{align*}


\textbf{Step 3.} 
By \cite[Thm 6.8]{neebanalytic}
there exists a unitary representation $(\pi_\lambda,\mathscr H_\lambda)$ of $G$, where $\mathscr H_\lambda$ is the
completion   of $\mathscr D_\lambda$, such that 
$\dd\pi_\lambda(x)\res{\mathscr D_\lambda}=\rho_\lambda(x)$ for every $x\in\g g_\eev$.

\textbf{Step 4.} From the previous steps and 
\cite[Lem. 2.2(b)]{menesa}
it follows that $(\pi_\lambda,\mathscr H_\lambda,\mathscr D_\lambda,\rho_\lambda)$ is a pre-representation of $(G,\g g)$. By 
\cite[Thm 6.13(b)]{nsfreshetsuper}, the latter pre-representation 
corresponds to an analytic unitary representation of $(G,\g g)$.

(ii)$\Rightarrow$(i): 
To check that $\lambda$ is positive is routine.
Next we prove that $\lambda$ is strongly analytic.
Let $\mathsf P$ denote the set of seminorms that 
define the topology of
$\g g_{\C,\eev}$. For every $p\in \mathsf P$
and every $r>0$ we set 
\[
U_{p,r}:=\{x\in \g g_{\C,\eev}\ :\ p(x)<r\}
\]
and
\[
\mathscr H^{\omega,p,r}:=\left\{w\in\mathscr H^\infty
\ :\ 
\sum_{n=0}^\infty \frac{1}{n!}\dd\pi(x)^nw\text{ converges for every }x\in U_{p,r}
\right\}.
\]
Choose $p_\circ\in\mathsf P$ 
and  $r_\circ>0$ such that the
restriction of the exponential map of $G$ to $U_{p_\circ,r_\circ}\cap \g g_\eev$ is an analytic diffeomorphism, the Baker--Campbell--Hausdorff product defines an analytic function 
$
U_{p_\circ,r_\circ}\times U_{p_\circ,r_\circ}\to\g g_{\C,\eev}
$, and the map 
\[
\g g_\eev\times\g g\to\g g
\ ,\ 
(x,y)\mapsto \Ad(e^x)(y)
\]
extends to a complex analytic map 
\begin{align}
\label{up0r0zgc}
U_{p_\circ,r_\circ}\times \g g_\C\to\g g_\C.
\end{align}
If $p\in\mathsf P$  then we write $p\geq p_\circ$ if $U_{p,r}\subseteq U_{p_\circ,r}$ for some (equivalently, every) $r>0$.
The rest of the proof is given in the following four steps:

\textbf{Step 5.} Let $p\geq p_\circ$, $0<r<r_\circ$,
and $v\in\mathscr H^\infty$. Then 
$v\in\mathscr H^{\omega,p,r}$ if and only if the orbit map
\[
\g g_\eev\mapsto \mathscr H\ ,\ 
x\mapsto \pi(e^x)v
\]
extends to an analytic function on $U_{p,r}$. The proof of the latter statement is similar to the proof 
of \cite[Lem. 3.3]{menesa}. (Here the main point is that $U_{p,r}$ is a balanced 0-neighborhood.)

\textbf{Step 6.} Let $p\geq p_\circ$, $0<r<r_\circ$,
$v\in\mathscr H^{\omega,p,r}$, and $a\in\g g_\eev$.
Then 
\[
\sum_{n=0}^\infty\frac{1}{n!}\dd\pi(x)^nv\in\mathscr H^\omega
\,\text{ for every }
x\in U_{p,r}
\]
and the map 
\[
u_a:U_{p,r}\to\mathscr H\ ,\ 
u_a(x):=\dd\pi(a)\left(\sum_{n=0}^\infty\frac{1}{n!}\dd\pi(x)^nv
\right)
\]
is an analytic function. The latter statement is an extension of \cite[Lem. 3.4]{menesa} to 
Fr\' echet--Lie groups. The proof of
\cite[Lem. 3.4]{menesa} is still valid because
we are assuming that the BCH product formula locally 
defines an analytic function.

\textbf{Step 7.} If $0<r<r_\circ$ and $p\geq p_\circ$, 
then 
$\mathscr H^{\omega,p,r}$ is $\g g_\C$--invariant.
The latter statement is an extension of 
\cite[Prop. 4.9]{menesa} and its proof is an adaptation of the proof of \cite[Prop. 4.9]{menesa}. For the reader's convenience we briefly explain the necessary modifications. Instead of \cite[Lem. 3.3]{menesa}
and \cite[Lem 3.4]{menesa} one uses Steps 5--6 above.
In order to prove that  the map given in 
\cite[Eq. (30)]{menesa} is analytic, one can substitute the norm estimates given in \cite{menesa} by the analyticity of the map \eqref{up0r0zgc}.

\textbf{Step 8.} Choose $0<r<r_\circ$ and $p\geq p_\circ$ such that 
$v\in \mathscr H^{\omega,p,r}$. 
By $\g g_\C$--invariance of $\mathscr H^{\omega,p,r}$ 
the series
\[
\sum_{n=0}^\infty \frac{1}{n!}\rho^\pi(x)^n\rho^\pi(D_2)v
\]
converges for every $x\in U_{p,r}$ and every $D_2\in U(\g g_\C)$. Therefore the series
\[
\sum_{n=0}^\infty\frac{1}{n!}|\lambda(D_1x^nD_2)|
=
\sum_{n=0}^\infty\frac{1}{n!}
|\langle
\rho^\pi(x)^n\rho^\pi(D_2)v,\rho^\pi(D_1^*)v\rangle|
\]
converges for every $D_1,D_2\in U(\g g_\C)$ and every $x\in U_{p,\frac{r}{2}}$. 
\end{proof}

\begin{corollary}
Let $(G,\g g)$ be an analytic Harish--Chandra pair such that $\g g$ is a Banach--Lie superalgebra and $G$ is 1-connected.
Let $\lambda:U(\g g_\C)\to\C$ be a
$\C$--linear map.
The  
following statements are equivalent.
\begin{itemize}
\item[(i)] $\lambda$ is positive and weakly analytic.
\item[(ii)] $\lambda$ is positive and strongly analytic.
\item[(iii)] There exists an analytic unitary representation $(\pi,\rho^\pi,\mathscr H)$ of $(G,\g g)$ and a homogeneous 
vector $v\in\mathscr H^\omega$ such that
$\lambda(D)=\langle \rho^\pi(D)v,v\rangle$ for every $D\in U(\g g_\C)$.

\end{itemize}

\end{corollary}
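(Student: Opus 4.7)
The plan is to reduce the corollary to Theorem \ref{thm-cinteg} via a Banach-specific upgrade from weak to strong analyticity. The implication (ii)$\Rightarrow$(i) is immediate from the definitions, since a uniform $0$-neighborhood as required by strong analyticity trivially gives a weakly analytic estimate at every pair $(D_1,D_2)$. The equivalence (ii)$\Leftrightarrow$(iii) is exactly Theorem \ref{thm-cinteg} applied with $G$ a $1$-connected Banach--Lie group (automatically BCH). Hence the substantive content is to prove (i)$\Rightarrow$(iii), equivalently (i)$\Rightarrow$(ii).

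Starting from a positive weakly analytic $\lambda$, I would form the GNS-type data $(\rho_\lambda,\mathscr D_\lambda)$ with $\mathscr D_\lambda=\rho(U(\g g_\C))\lambda$ exactly as in the proof of Theorem \ref{thm-cinteg}. The Step~2 identity $\|\rho_\lambda(x)^n\rho_\lambda(D_\circ)\lambda\|^2=|\lambda(D_\circ^*x^{2n}D_\circ)|$, combined with weak analyticity of $\lambda$ at the pair $(D_\circ^*,D_\circ)$ and the same Cauchy--Schwarz comparison used in that step, yields convergence of $\sum_n \|\rho_\lambda(x)^n\rho_\lambda(D_\circ)\lambda\|/n!$ on a (possibly $D_\circ$-dependent) neighborhood of $0$ in $\g g_{\C,\eev}$. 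In other words, every vector of $\mathscr D_\lambda$ is an analytic vector for the $\g g_\eev$-action $\rho_\lambda$.

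I would then appeal to the Banach--Lie group case of the analytic integration theorem \cite[Thm 6.8]{neebanalytic}, for which pointwise analyticity (rather than the equianalyticity required for general BCH groups) already suffices, because a submultiplicative norm on $\g g$ together with the estimate $\|\ad(x)^n\|\leq C^n\|x\|^n$ converts pointwise analytic behaviour into bounds that are uniform on norm-bounded sets. This produces an analytic unitary representation $(\pi_\lambda,\mathscr H_\lambda)$ of $G$ with $\dd\pi_\lambda(x)\res{\mathscr D_\lambda}=\rho_\lambda(x)$. As in Step~4 of Theorem \ref{thm-cinteg}, \cite[Lem 2.2(b)]{menesa} then shows that $(\pi_\lambda,\mathscr H_\lambda,\mathscr D_\lambda,\rho_\lambda)$ is a pre-representation of $(G,\g g)$, and \cite[Thm 6.13(b)]{nsfreshetsuper} promotes it to an analytic unitary representation $(\pi,\rho^\pi,\mathscr H)$ of the Harish--Chandra pair; the homogeneous vector $v:=\lambda\in\mathscr H^\omega$ then satisfies $\lambda(D)=\langle \rho^\pi(D)v,v\rangle$, since $\lambda$ is even.

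The delicate step is the Banach-specific analytic integration, because in the general BCH setting equianalyticity of $\mathscr D_\lambda$ is genuinely needed — Theorem \ref{thm-cinteg} uses strong analyticity of $\lambda$ precisely to obtain this uniformity. In the Banach case this obstruction disappears thanks to submultiplicativity of the norm, which converts the pointwise analyticity of vectors in $\mathscr D_\lambda$ into the uniform control required to integrate to the group level. Once that step is granted, the rest of the argument is a direct specialisation of Theorem \ref{thm-cinteg}.
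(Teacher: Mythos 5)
Your proposal is correct and takes essentially the same route as the paper: (iii)$\Rightarrow$(ii) via Theorem~\ref{thm-cinteg}, (ii)$\Rightarrow$(i) trivially, and (i)$\Rightarrow$(iii) by re-running the proof of Theorem~\ref{thm-cinteg}, where weak analyticity still yields (vector-dependent) analyticity of each element of $\mathscr D_\lambda$ and the Banach--Lie case of \cite[Thm 6.8]{neebanalytic} integrates the representation without the equianalyticity hypothesis. The only divergence is that the paper simply cites the fact that the conclusion of \cite[Thm 6.8]{neebanalytic} holds for Banach--Lie groups without equianalyticity, whereas your submultiplicativity heuristic should not be mistaken for a proof of that fact (pointwise analyticity of vectors does not become uniform over $\mathscr D_\lambda$ merely because the norm on $\g g$ is submultiplicative); rely on the cited result instead.
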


\begin{proof}
(iii)$\Rightarrow$(ii) follows from Theorem 
\ref{thm-cinteg} and (ii)$\Rightarrow$(i) is trivial. For  (i)$\Rightarrow$(iii) the proof of Theorem \ref{thm-cinteg} still remains valid, because 
for 
Banach--Lie groups
the conclusion of 
\cite[Thm 6.8]{neebanalytic} remains 
true  without assuming equianalyticity.
\end{proof}


\begin{thebibliography}{ZZZZZZ}



\bibitem[AlLa12]{aldlau} Alldridge, A., Laubinger, M., \emph{Infinite-dimensional supermanifolds over arbitrary base fields,} 
Forum Mathematicum,  24 (2012), no. 3, 565--608.


\bibitem[BoSi71]{bochnaksiciak} Bochnak, J., Siciak, J.,
\emph{Analytic functions in topological vector spaces},
Studia mathematica 39 (1971), 77--112.

\bibitem[BFD86]{boucheretal} Boucher, W., Friedan, D., Kent, A., {\it 
Determinant formulae and unitarity for the N =
2 superconformal algebras in two dimensions or exact results on string compactifica-
tion},  Phys. Lett. B 172, 316–322 (1986)



\bibitem[Bo74]{bourbaki} Bourbaki, N., \emph{Topologie G\' en\' erale}, 
Chap. 5 \` a 10, Hermann, 1974.


\bibitem[CCTV]{varadarajan} Carmeli, C., Cassinelli, G., Toigo, A., Varadarajan, V. S., \emph{Unitary representations of super Lie groups and applications to the classification and multiplet structure of super particles}, 
Comm. Math. Phys. 263 (2006), no. 1, 217--258. 






\bibitem[DeMo99]{delignemorgan} Deligne, P., Morgan, J. W., \emph{Notes on supersymmetry (following Joseph Bernstein)}, Quantum fields and strings: a course for mathematicians, Vols. 1, 2 (Princeton, NJ, 1996/1997), 41--97, Amer. Math. Soc., Providence, RI, 1999.




\bibitem[FQS85]{friedanqiu}  Friedan, D., Qiu, Z., Shenker, S.,
\emph{Superconformal invariance in two dimensions and the tricritical Ising model},
Phys. Lett. B 151 (1985), no. 1, 37--43. 





\bibitem[GKO86]{goddard} 
Goddard, P., Kent, A., Olive, D., 
\emph{Unitary representations of the Virasoro and super-Virasoro algebras}, 
Comm. Math. Phys. 103 (1986), no. 1, 105--119.



\bibitem[GeMa88]{gelfandmanin} Gelfand, I., Manin, Y., \emph{Methods of homological algebra},
Second edition. Springer Monographs in Mathematics. Springer-Verlag, Berlin, 2003.




\bibitem[Gl02]{glockner} Gl\" ockner, H., 
\emph{Lie group structures on quotient groups and universal complexifications for infinite-dimensional Lie groups}, Journal of Functional Analysis 194 (2002), 347--409.


\bibitem[GlNe]{glocknerneeb} Gl\" ockner, H., Neeb, K.--H.,
\emph{Infinite-dimensional Lie groups, general theory and main examples},
book in preparation.

\bibitem[Ha82]{hamilton} Hamilton, R., 
\emph{The inverse function theorem of Nash and Moser},
Bulletin of the American Mathematical Society 7 (1982), no. 1, 65--222.


\bibitem[Io08]{iohara8} Iohara, K.: Modules de plus haut poids unitarisables sur la super-alg\`ebre de Virasoro
N = 2 tordue, Ann. Inst. Four. 58 (2008), 733--754.



\bibitem[Io10]{iohara} ---, \emph{Unitarizable highest weight modules of the $N=2$ super Virasoro algebras: untwisted sectors}, Lett. Math. Phys. 91 (2010), no. 3, 289--305.

\bibitem[JaZh88]{jarzh}
Jarvis, P. D., Zhang, R. B.,
\emph{Unitary Sugawara constructions for affine superalgebras},
Phys. Lett. B 215 (1988), no. 4, 695--700. 


\bibitem[JaKa]{jakobsen2} Jakobsen, H. P., Kac, V.,
\emph{A new class of unitarizable highest weight representations of infinite-dimensional Lie algebras. II},
J. Funct. Anal. 82 (1989), no. 1, 69--90. 




\bibitem[KaTo]{kactodor} Kac, V. G., Todorov, I. T., 
\emph{Superconformal current algebras and their unitary representation}, Comm. Math. Phys. 102 (1985), 
337--347.


\bibitem[KoB]{kostant} Kostant, B., \emph{Graded manifolds, graded Lie theory, and prequantization}, Differential geometrical methods in mathematical physics (Proc. Sympos., Univ. Bonn, Bonn, 1975), pp. 177--306. Lecture Notes in Math. 570, Springer, Berlin, 1977.


\bibitem[KoJ]{koszul} Koszul, J.-L. \emph{Graded manifolds and graded Lie algebras}, Proceedings of the international meeting on geometry and physics (Florence, 1982), 71--84, Pitagora, Bologna, 1983.




\bibitem[KrMi97]{krieglmichor} Kriegl A., Michor, P. W.,
\emph{The convenient setting of global analysis},
American Mathematical Society, 1997.

\bibitem[MNS12]{menesa} Merigon, S., Neeb, K.--H., Salmasian, S., 
\emph{Categories of unitary representations of Banach--Lie supergroups and restriction functors}, to appear in Pacific Journal of Mathematics 257 (2012), no. 2, p.431--470.


\bibitem[Mi84]{milnor}
Milnor, J., \emph{Remarks on infinite-dimensional Lie groups}, in: B. DeWitt, R. Stora (Eds.), Relativit\' e, groupes et
topologie II, Les Houches, 1983, North-Holland, Amsterdam, 1984, p. 1007--1057.

\bibitem[Ne00]{neebbook} Neeb, K.--H., \emph{Holomorphy and convexity in Lie theory}, Walter de Gruyter, 2000. 


\bibitem[Ne06]{neebjpn} ---, \emph{Towards a Lie theory of locally convex groups}, Japanese Journal of Mathematics,  1 (2006), p.291--468.



\bibitem[Ne10]{neebdiff} ---,  \emph{On differentiable vectors for representations of infinite-dimensional Lie groups},
J. Funct. Anal., 259 (2010), no. 11, 2814--2855. 


\bibitem[Ne11]{neebanalytic} ---,  \emph{On analytic vectors for unitary representations of infinite-dimensional Lie groups}, Ann. Inst. Fourier (Grenoble), 
61 (2011), no. 5, p. 1839--1874.

\bibitem[NeSa12]{nsfreshetsuper} Neeb, K--H., Salmasian, H., 
\emph{Differentiable vectors and unitary representations of Fr\' echet--Lie supergroups}, preprint.


\bibitem[Sa09]{sachse} Sachse, C., \emph{Global analytic approach to Teichm\"uller spaces,} Ph.D., Thesis, Universi\"at Leipzig, 2007.

\end{thebibliography}
\end{document}